\def\D{\mathcal{D}}
\def\F{\mathcal{F}}
\def\G{\mathcal{G}}
\def\H{\mathcal{H}}
\def\E{\mathcal{E}}
\def\Rc{\mathcal{R}}
\def\R{\mathbb{R}}
\def\Pc{\mathcal{P}}
\def\e{\varepsilon}
\def\d{\mathsf{d}}
\def\h{\mathsf{h}}
\def\Hm{\mathsf{H}}
\def\M{\mathcal{M}}
\def\div{\mathrm{div}}
\newcommand{\mres}{\mathbin{\vrule height 1.4ex depth 0pt width
0.13ex\vrule height 0.13ex depth 0pt width 1.4ex}}
\newcommand{\cb}{\color{blue}}
\newtheorem{theorem}{Theorem}[section]
\newtheorem{lemma}[theorem]{Lemma}
\newtheorem{prop}[theorem]{Proposition}
\newtheorem{cor}[theorem]{Corollary}
\theoremstyle{remark}
\newtheorem{remark}[theorem]{Remark}
\title[Entropic regularisation of UOT]{Entropic regularisation of unbalanced optimal transportation problems}
\author{M. Buze}
\address{School of Mathematical and Computer Sciences, Heriot-Watt University, Edinburgh, Scotland, EH14 4AS, United Kingdom}
\email{m.buze@hw.ac.uk}
\author{M. H. Duong}
\address{School of Mathematics, Watson Building, University of Birmingham, Edgbaston, Birmingham, B15 2TT UK}
\email{h.duong@bham.ac.uk}
\thanks{Both authors would like acknowledge the support of the Engineering and Physical Sciences Research Council in the UK. MHD is supported under research grant EP/V038516/1.  MB was supported under the same research grant for most of the work towards this paper and is now supported by research grant EP/V00204X/1.}
\subjclass[2020]{49Q22, 28A33, 46E27, 58E30, 90C25, 49N05}
\date{\today}
\keywords{entropic optimal transport, unbalanced optimal transport, regularization}
\begin{document}

\maketitle

\begin{abstract}
    We develop a mathematical theory of entropic regularisation of unbalanced optimal transport problems. Focusing on static formulation and relying on the formalism developed for the unregularised case, we show that unbalanced optimal transport problems can be regularised in two qualitatively distinct ways -- either \emph{on the original space} or \emph{on the extended space}. We derive several reformulations of the two regularised problems and in particular introduce the idea of a \emph{regularised induced marginal perspective cost function} allowing us to derive an \emph{extended space formulation of the original space regularisation}. We also prove convergence to the unregularised problem in the case of the extended space regularisation and discuss on-going work on deriving a unified framework based on \emph{higher order liftings} in which both regularisations can be directly compared. We also briefly touch upon how these concepts translate to the corresponding dynamic formulations and provide evidence why the extended space regularisation should be preferred. This is a preliminary version of the manuscript, to be updated in the near future.
\end{abstract}

\section{Introduction}

Given some complete separable metric space $X$, two positive Radon measures with finite mass,  $\mu_0,\mu_1 \in \M(X)$, and a lower semicontinuous cost function $c\,\colon\, X \times X \to \overline\R$, the standard \emph{balanced} optimal transport problem is given by 
\[
{\rm OT}(\mu_0,\mu_1) = \inf_{\gamma \in \Gamma(\mu_0,\mu_1)}(c,\gamma), \quad (c,\gamma):= \int_{X\times X}c(x_0,x_1)d\gamma(x_0,x_1),
\]
\[
\Gamma(\mu_0,\mu_1) := \Big\{ \gamma \in \M(X \times X) \mid \gamma_i = \mu_i\Big\},
\]
where $\gamma_i$ is the $i$th marginal, 
\[
\gamma_i := \pi^i_{\#}\gamma \in \M(X), \quad \pi^i(x_0,x_1) = x_i.
\]

In order to allow for efficient algorithms, but also out of theoretical interest, the balanced optimal transport problem is often \emph{entropy-regularised} \cite{N21}, namely 
\[
{\rm OT}_{\e}(\mu_0,\mu_1) = \inf_{\gamma \in \Gamma(\mu_0,\mu_1)}\Big\{ (c,\gamma) + \e \F(\gamma \mid \nu)\Big\},
\]
where $\nu \in \M(X \times X)$ is some known reference measure (e.g. $\nu = \mu_0 \otimes \mu_1$), the entropy functional $\F$ is given by 
\[
\F(\gamma \mid \nu) = \int_{X \times X}F(\varsigma)d\nu + F_{\infty}'\gamma^{\perp}(X\times X), \quad \gamma = \varsigma \nu + \gamma^{\perp},
\]
where $F(s) = s\log s - s +1$ is the Kullback-Leibler divergence (also referred to as relative entropy) and $F'_{\infty}$ is the recession constant which in this case is equal to $+\infty$, thus enforcing that the minimiser satisfies $\gamma \ll \nu$.

It is not hard to see that that both  problems introduced so far are meaningful only if the masses of input measures are \emph{balanced}, that is if $\mu_0(X) = \mu_1(X)$, as otherwise the set $\Gamma(\mu_0,\mu_1)$ is empty.

Relying in part on the formalism developed in \cite{LMS18}, we can equivalently formulate the unregularised problem as 
\[
{\rm OT}(\mu_0,\mu_1) = \inf_{\gamma \in \M(X \times X)} \Big\{ \sum_i \overline{\F}(\gamma_i \mid \mu_i) + (c,\gamma)\Big\},
\]
where $\overline \F(\gamma_i \mid \mu_i) = 0$ if $\gamma_i = \mu_i$ and $+\infty$ otherwise, which in particular encodes the unfeasbility of the problem when masses of the measures differ, as
\[
\mu_0(X) \neq \mu_1(X) \implies {\rm OT}(\mu_0,\mu_1) = +\infty. 
\]

The idea behind the unbalanced optimal transport problems, introduced independently by three groups \cite{CPSV16,KMV16,LMS18} is to relax the sharpness of $\overline \F$ and replace it with an entropy functional merely penalising the deviations between the marginals of $\gamma$ and $\mu_i$, 
\[
\F(\gamma_i \mid \mu_i) = \int_X F(\sigma_i)d\mu_i, \quad \gamma_i = \sigma_i \mu_i + \gamma_i^{\perp},
\]
where the entropy funtion $F$ may not necessarily be the KL divergence introduced above. The unbalanced optimal transport problem is thus given by 
\begin{equation}\label{UOT-intro1}
{\rm UOT}(\mu_0,\mu_1) = \inf_{\gamma \in \M(X \times X)} \Big\{ \sum_i {\F}(\gamma_i \mid \mu_i) + (c,\gamma)\Big\}.
\end{equation}
Remarkably, it can be shown \cite[Section~5]{LMS18} that this problem admits an extended space space formulation
\begin{equation}\label{UOT-intro2}
{\rm UOT}(\mu_0,\mu_1) = \inf_{\alpha \in S(\mu_0,\mu_1)} (H,\alpha), \quad (H,\alpha):= \int_{Y \times Y} H(y_0,y_1)d\alpha(y_0,y_1), 
\end{equation}
where $Y := X \times \R_+$, with notation $y_i = (x_i, s_i)$ and  $H\, \colon\, Y \times Y \to \overline{\R}$ is the induced marginal perspective cost function. The infimum is taken over the set measures satisfying homogeneous marginal contraints
\[
S(\mu_0,\mu_1) = \Big\{ \alpha \in M(Y \times Y) \mid \pi^{x_i}_{\#}(s_i \alpha) = \mu_i.\Big\}
\]
A full account of the concepts introduced so far will be given in Section~\ref{sec:UOT}.\\

The aim of this paper is to study entropy regularisation of the unbalanced optimal transport problems. From the discussion above it is clear that this can be achieved in two ways -- starting from \eqref{UOT-intro1}, we can regularise \emph{on the original space} and consider
\[
{\rm UOT}_{X,\e}(\mu_0,\mu_1) = \inf_{\gamma \in \M(X \times X)} \Big\{ \sum_i {\F}(\gamma_i \mid \mu_i) + (c,\gamma) + \e \F(\gamma \mid \nu_X) \Big\},
\]
for some reference measure $\nu_X \in \M(X \times X)$.

On the other hand, starting from \eqref{UOT-intro2}, we can regularise \emph{on the extended space} and consider 
\[
{\rm UOT}_{Y,\e}(\mu_0,\mu_1) = \inf_{\alpha \in S(\mu_0,\mu_1)}\Big\{ (H,\alpha) + \e \F(\alpha \mid \nu_Y)\Big\},
\]
for some reference measure $\nu_Y \in \M(Y \times Y)$.

\subsection*{Outline of the paper and its contributions} The paper begins with a reminder on the theory of static formulation of unbalanced optimal transport, adapted from \cite{LMS18} and presented in Section~\ref{sec:UOT}. In particular, we highlight how, in order to prove the homogeneous and extended reformulations, one needs to leverage both the dual and reverse formulations. We also show as an example that even the balanced case admits a not necessarily trivial extended description.

The contributions of this paper are described in Sections~\ref{sec:eUOT}--\ref{sec:eUOT-dynamic}. In Section~\ref{sec:eUOT_X} we develop the theory of the original space based regularisation. We closely follow the structure of Section~\ref{sec:UOT} and recover reverse, dual and homogeneous formulations and related results. Interestingly, while the induced marginal perspective cost function in the canonical unregularised case (with KL divergences as entropy functions) is given by
\[
H(x_0,s_0,x_1,s_1) = s_0 + s_1 - 2\sqrt{s_0s_1}\exp\left(\frac{-c(x_0,x_1)}{2}\right),
\]
we show that the corresponding \emph{regularised induced marginal perspective cost function} is given by 
\[
H_\e(x_0,s_0,x_1,s_1,S) = s_0 + s_1 + \e S - (2+\e)\left((s_0s_1)^{\frac{1}{2+\e}} S^{\frac{\e}{2+\e}}\exp\left(\frac{-c(x_0,x_1)}{2+\e}\right)\right),
\]
which is $1$-homogeneous with respect to $(s_0,s_1,S)$. Based on that, we derive extended formulation, where we lift from $\M(X \times X)$ to the space $\M(X^2 \times \R_+^3)$.

In Section~\ref{sec:eUOT_Y} we partially develop the theory of the extended space based regularisation. In particular, we establish that as the regularisation parameter $\e \to 0$ this regularisation converges to the original unbalanced optimal transport problem. To achieve this, we leverage the reformulation of the unbalanced optimal transport problem as an infimum over optimal transport problems on the extended space and the known theory of balanced entropic optimal transport. 

In Section~\ref{sec:eUOT-unify} we discuss the key ingredients of a unified extended framework. On the one hand it will ensure that a direct comparison between the two types of regularisations is possible and thus pave a way for a convergence proof for the original space regularisation. On the other hand the framework establishes the principles of \emph{higher order liftings}, which can be used to formulate \emph{higher order unbalanced optimal transport problems} and its entropic regularisations. 

The paper concludes with Section~\ref{sec:eUOT-dynamic}, in which we sketch out how the key distinction between original- and extended- space regularisations in our static formulation enter in the dynamic formulation and provide evidence that the extended space regularisation appears to lead to asymptotically more rapid convergence as $\e \to 0$.

\subsection*{Comment on the current version of the manuscript} The manuscript in its current form contains several gaps which we aim to fill in the near future. They are clearly indicated throughout the paper with blue-coloured annotations. These gaps are one of two possible kinds: (i) technicalities around results we are convinced to be true; (ii) deeper novel ideas that require significant work and may even be postponed to future papers, but which we felt should be put out there for the community to see. This is also why gaps of the first kind are currently present -- we wanted to publish this manuscript in some form in a timely fashion. In particular, we are aware that our extended space formulations currently gloss over the issues related to sets where radial components are null. We invite people interested in this work to reach out to us to discuss possible mistakes and further steps.  
\section{Unbalanced Optimal Transport}\label{sec:UOT}
We follow the presentation in \cite{LMS18}. Let $(X,\d)$ be a complete and separable metric space and let $\mathcal{M}(X)$ be the space of finite nonnegative Radon measures on $X$. Let $\mu_0,\mu_1 \in \mathcal{M}(X)$. {\color{blue} \footnotesize [In the current version of the manuscript, we implicitly assume ${X = \Omega \subset \R^n}$ is a compact subset of $\R^n$, but keep the discussion as general as possible.]}
\subsection{Static primal formulation}\label{sec:UOT-P}
The class of unbalanced optimal transport problems we consider is given by 
\begin{equation}\label{UOT-P}
{\rm UOT}(\mu_0,\mu_1) := \inf_{\gamma \in \mathcal{M}(X\times X)} \E(\gamma \mid \mu_0,\mu_1),
\end{equation}
where the primal unbalanced optimal transport functional $\E$ is given by 
\begin{equation}\label{P}
\E(\gamma \mid \mu_0, \mu_1) := \sum_{i=0,1} \mathcal{F}(\gamma_i \mid \mu_i) + (c,\gamma).
\end{equation}
Here $\gamma_i = \pi^i_{\#}\gamma \in \M(X)$ denotes the $i$th marginal of $\gamma \in \M(X\times X)$ and the relative entropy functional $\F$ is given by
\begin{equation}\label{F-func}
    \F(\gamma_i \mid \mu_i) := \int_X F(\sigma_i)d\mu_i + F_{\infty}' \gamma_i^{\perp}(X),
\end{equation}
where $F\,\colon [0,+\infty) \to [0,+\infty]$ is a suitable convex entropy function,
\[
{F'_{\infty} = \lim_{s \to \infty} \frac{F(s)}{s}}
\]
is known as the recession constant and $\sigma_i$ is the density obtained via the Lebesgue decomposition \cite[Lemma 2.3]{LMS18}
\begin{equation}\label{L-decomp}
    \gamma_i = \sigma_i \mu_i + \gamma_i^\perp, \quad \mu_i = \rho_i \gamma_i + \mu_i^\perp.
\end{equation}
Finally, $c\,\colon\,X \times X \to \overline\R$ is the cost function, which is assumed to be lower semi-continuous. The coupling
\begin{equation}\label{c-gamma-coupling}
(c,\gamma) := \int_{X\times X} c(x_0,x_1) d\gamma(x_0,x_1)
\end{equation}
denotes the linear cost functional. 
\begin{theorem}[\protect{\cite[Theorem~3.3]{LMS18}}]\label{thm:UOT-P-minimiser}
    Under natural assumptions on the entropy functional $\F$ and the cost function $c$, there exists at least one $\gamma \in \M(X\times X)$ minimizing the right-hand side of \eqref{UOT-P}.
\end{theorem}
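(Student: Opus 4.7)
The plan is to apply the direct method of the calculus of variations to $\E(\cdot \mid \mu_0,\mu_1)$ on $\M(X\times X)$ endowed with weak-$*$ convergence. I would first verify that $\E$ is proper and bounded below. Properness follows by testing with a convenient competitor: for instance, if $F(0) < \infty$ one can take $\gamma = 0$, giving $\E(0 \mid \mu_0,\mu_1) = F(0)(\mu_0(X) + \mu_1(X))$; if $F(0) = \infty$ one can instead take a rescaled product $\gamma = \lambda\,\mu_0 \otimes \mu_1$ with $\lambda$ chosen so that both marginals of $\gamma$ are absolutely continuous with respect to $\mu_i$. Boundedness below follows from $F \geq 0$ (a standard normalization $F(1)=0$ with $F$ convex) together with the assumption that the cost $c$ is bounded below, which must be part of the "natural assumptions".

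Next, take a minimizing sequence $(\gamma^n)$. The crucial observation is that $\gamma^n_0(X) = \gamma^n_1(X) = \gamma^n(X\times X)$, so a uniform bound on either marginal controls the full mass of $\gamma^n$. Such a bound follows from boundedness of $\F(\gamma^n_i \mid \mu_i)$ along the minimizing sequence together with the growth condition on $F$ in the natural assumptions: either $F$ is superlinear at infinity (as for the KL divergence, where $F'_\infty = +\infty$), giving a de la Vall\'ee-Poussin type argument, or $F'_\infty < \infty$ in which case the inequality $F(s) \geq F'_\infty s - C$ directly bounds $\gamma^n_i(X)$. Under the working assumption that $X$ is compact, Prokhorov's theorem then produces a subsequence with $\gamma^{n_k} \rightharpoonup^* \gamma^\ast$ in $\M(X\times X)$.

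Finally, I would establish weak-$*$ lower semicontinuity of $\E(\cdot \mid \mu_0,\mu_1)$. The linear term $(c,\gamma)$ is weak-$*$ l.s.c.\ because $c$ is lower semi-continuous and bounded below, by the standard approximation of $c$ from below by continuous bounded functions. The entropy term $\F(\gamma_i \mid \mu_i)$, viewed as a functional of $\gamma_i$, is the convex perspective functional associated with the convex l.s.c.\ entropy $F$; its weak-$*$ lower semicontinuity, including the recession contribution $F'_\infty \gamma_i^\perp(X)$, is a Reshetnyak-type result for which one can invoke the machinery recorded in \cite{LMS18}. Since the projections $\pi^i$ are continuous, $\gamma^{n_k} \rightharpoonup^* \gamma^\ast$ implies $\gamma^{n_k}_i \rightharpoonup^* \gamma^\ast_i$, and hence $\E(\gamma^\ast \mid \mu_0,\mu_1) \leq \liminf_k \E(\gamma^{n_k} \mid \mu_0,\mu_1)$, so that $\gamma^\ast$ is a minimizer. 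The main obstacle is the lower semicontinuity of the singular contribution $F'_\infty \gamma_i^\perp(X)$, since the Lebesgue decomposition need not pass to weak-$*$ limits in a naive way; this is precisely what the perspective-function reformulation of $\F$ is designed to handle, and is therefore where I would rely most directly on the framework of \cite[Section~2]{LMS18}.
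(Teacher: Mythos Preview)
Your proposal is correct and follows exactly the standard direct-method argument; the paper does not actually supply a proof of its own here but simply cites \cite[Theorem~3.3]{LMS18}, whose proof proceeds along the same lines you outline (properness, mass bound from the entropy via superlinearity or positive recession, tightness on the compact $X$, and lower semicontinuity of the entropy via the perspective/Reshetnyak machinery together with lower semicontinuity of $(c,\cdot)$). One minor imprecision: $F(1)=0$ and convexity alone do not force $F\geq 0$; nonnegativity of $F$ is part of the standing hypotheses (the paper takes $F\colon[0,\infty)\to[0,\infty]$), and similarly your properness competitor $\lambda\,\mu_0\otimes\mu_1$ needs $(c,\mu_0\otimes\mu_1)<\infty$, which is again absorbed into the ``natural assumptions''.
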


Of particular interest to us is the case where the entropy function $F$ is the Kullback-Leibler (KL) divergence
\begin{equation}\label{KL}
    F(s) = s\log s - s  + 1,\quad \Big(\implies F_\infty' = +\infty\;\Big)
\end{equation}
and the cost function $c$ is given by 
\begin{equation}\label{HK-cost-fct}
c(x_0,x_1) := \begin{cases}
-\log(\cos^2(\d(x_0,x_1)))\quad &\text{if }\d(x_0,x_1) < \pi/2 \\
+\infty \quad &\text{otherwise}.
\end{cases}
\end{equation}
In this case the UOT problem \eqref{UOT-P} coincides with the Hellinger-Kantorovich distance on $\M(X)$ \cite[Part~II]{LMS18}. 
\subsection{Equivalent formulations on $X$}\label{sec:UOT-equiv}
The UOT problem in \eqref{UOT-P} admits three equivalent formulations defined on the space $X$, which we will now provide a brief account of. It will turn out in Section~\ref{sec:eUOT_X} that, subject to necessary adjustments, similar results apply to the entropy-regularised UOT problem. 
\subsubsection{Reverse formulation}\label{sec:UOT-R}
Given some entropy function $F$ in \eqref{F-func}, the corresponding reverse entropy function $R\,\colon\, [0,\infty) \to [0,\infty]$ is introduced as
\begin{equation}\label{R-ent}
    R(s) := \begin{cases}
s F(1/s)\quad &\text{if }s>0,\\
F_{\infty}' \quad &\text{if }s=0.
\end{cases}
\end{equation}
Note if $F$ is the KL divergence \eqref{KL} then 
\begin{equation}\label{R-KL}
    R(s) = s - \log s - 1.
\end{equation}
\begin{theorem}[\protect{\cite[Theorem~3.11]{LMS18}}]
The UOT problem \eqref{UOT-P} can be equivalently formulated as 
    \begin{equation*}
        {\rm UOT}(\mu_0,\mu_1) = \inf_{\gamma \in \mathcal{M}(X\times X)} \Rc(\mu_0,\mu_1 \mid \gamma),
    \end{equation*}
    where, recalling the Lebesgue decomposition in \eqref{L-decomp}, the reverse UOT functional $\Rc$ is given by 
    \begin{equation}\label{R}
    \Rc(\mu_0, \mu_1 \mid \gamma) := \int_{X \times X} \left( \sum_{i=0,1} R_i(\rho_i(x_i)) + c(x_0,x_1)\right)d\gamma(x_0,x_1) + R'_{\infty}\sum_{i=0,1}\left(\mu_i - \rho_i\gamma_i\right)(X),
    \end{equation}
    noting that $R_{\infty}' = F(0)$.
\end{theorem}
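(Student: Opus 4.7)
The cost integral $(c,\gamma)$ appears verbatim in both $\E(\gamma\mid\mu_0,\mu_1)$ and $\Rc(\mu_0,\mu_1\mid\gamma)$, so the statement reduces to proving, for each $i\in\{0,1\}$ and each pair $\gamma_i,\mu_i\in\M(X)$, the per-marginal identity
\[
\F(\gamma_i \mid \mu_i) \;=\; \int_X R(\rho_i)\, d\gamma_i \;+\; R'_\infty \, (\mu_i - \rho_i\gamma_i)(X).
\]
Granted this, summing over $i$, re-inserting $(c,\gamma)$, and taking the infimum over $\gamma\in\M(X\times X)$ recovers the reverse formulation. The transport structure plays no role; what is really being claimed is an identity between two equivalent ways of measuring the ``relative size'' of $\gamma_i$ and $\mu_i$.

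\textbf{Strategy.} My plan is to pick the common dominating measure $\lambda := \gamma_i + \mu_i$ and write $\gamma_i = g\,\lambda$, $\mu_i = m\,\lambda$. This collapses the two Lebesgue decompositions in \eqref{L-decomp} into a single $\lambda$-a.e.\ partition of $X$ into three pieces: an overlap region $\{g>0,\,m>0\}$, a region $\{m>0,\,g=0\}$ on which $\gamma_i$ is absent (carrying the measure $\mu_i - \rho_i\gamma_i$), and a region $\{g>0,\,m=0\}$ on which $\mu_i$ is absent (carrying $\gamma_i^\perp$). On the overlap region the defining relation $R(s) = sF(1/s)$ of \eqref{R-ent} gives the pointwise equality $R(\rho_i)\,g = m\,F(\sigma_i)$, so the absolutely continuous parts on both sides agree. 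On the remaining two regions, the matching follows from the elementary identities $R(0)=F'_\infty$ (by \eqref{R-ent}) and $R'_\infty = \lim_{s\to\infty} F(1/s) = F(0)$ (by direct computation of the recession of $s\mapsto sF(1/s)$); these pair each singular term on one side with the corresponding singular term on the other. Assembling the three contributions yields the identity.

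\textbf{Main obstacle.} There is no serious analytical obstruction, only careful measure-theoretic bookkeeping. The single point requiring attention is the interpretation of $\sigma_i$ and $\rho_i$ on the singular supports, where the formal ratios $g/m$ and $m/g$ degenerate; the key is to commit to the conventions inherited from \eqref{L-decomp}, in which $\sigma_i$ is defined only $\mu_i$-a.e.\ and $\rho_i$ only $\gamma_i$-a.e., rather than attempting to extend them across the null sets. Once that is settled, the three-region verification is mechanical and the passage from the per-marginal identity to the equality of infima over $\M(X\times X)$ is immediate, making use of the existence guarantee from Theorem~\ref{thm:UOT-P-minimiser}.
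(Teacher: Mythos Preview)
Your argument is correct. The paper does not reproduce a proof of this statement---it is quoted verbatim from \cite[Theorem~3.11]{LMS18}---so there is no in-paper proof to compare against. Your common-dominating-measure decomposition is precisely the standard route to this identity (and is essentially how the result is established in \cite{LMS18}): on the overlap set the defining relation $R(s)=sF(1/s)$ yields $R(\rho_i)\,g = m\,F(\sigma_i)$ pointwise, while the two singular pieces are matched by $R(0)=F'_\infty$ and $R'_\infty=F(0)$ respectively.

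One small remark: the appeal to Theorem~\ref{thm:UOT-P-minimiser} at the end is superfluous. You have shown the pointwise-in-$\gamma$ equality $\E(\gamma\mid\mu_0,\mu_1)=\Rc(\mu_0,\mu_1\mid\gamma)$ for every $\gamma\in\M(X\times X)$, so the two infima coincide automatically, whether or not a minimiser exists.
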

\subsubsection{Dual formulation}\label{sec:UOT-D}
We begin with a brief reminder on Convex Analysis, adapted from \cite[Appendix~A]{CPSV18}. Let $E$ and $E^*$ be topologically paired vector spaces, with the duality pairing between them denoted by $\langle \cdot, \cdot \rangle \, \colon\, E \times E^* \to \R$. The Legendre dual of a function $f\,\colon\, E \to \overline\R$ is defined, for each $x^* \in E^*$ by 
\begin{equation}\label{Legendre-dual}
f^*(x^*) := \sup_{x \in E}\, \langle x^*, x\rangle - f(x).
\end{equation}
\begin{theorem}[Fenchel-Rockafeller\cite{R67}]\label{thm:FRT}
Let $(E,E^*)$ and $(F,F^*)$ be two couples of topologically paired spaces and $A\,\colon\, E \to F$ be a continuous linear operator with $A^*\,\colon\, F^* \to E^*$ its adjoint. Suppose further that $f$ and $g$ are lower semicontinous and proper convex functions defined on $E$ and $F$ respectively. If there exists $x \in E$ such that $f(x) < \infty$ and $g$ is continuous at $Ax$, then 
\[
\sup_{x \in E} -f(-x) - g(Ax) = \min_{y^* \in F^*} f^*(A^*y^*) + g^*(y^*)
\]
and the minimum is attained. 
\end{theorem}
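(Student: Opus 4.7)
The plan is to prove this via the classical perturbation-and-biconjugation strategy: first establish weak duality from the Fenchel-Young inequality, then obtain strong duality and attainment by identifying the dual as the negative biconjugate of a perturbation function whose continuity at the origin is guaranteed by the qualification hypothesis.

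First I would dispatch weak duality. For arbitrary $x \in E$ and $y^* \in F^*$, the Fenchel-Young inequality applied to $f$ at the pair $(-x, A^*y^*)$ and to $g$ at the pair $(Ax, y^*)$ yields
\[
f(-x) + f^*(A^*y^*) \ge \langle A^*y^*, -x\rangle = -\langle y^*, Ax\rangle, \qquad g(Ax) + g^*(y^*) \ge \langle y^*, Ax\rangle.
\]
Adding and rearranging gives $-f(-x) - g(Ax) \le f^*(A^*y^*) + g^*(y^*)$ for all admissible pairs, hence $\sup \le \inf$. This part is purely algebraic and needs no qualification.

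Next I would introduce the perturbation function $v\colon F \to \overline{\R}$,
\[
v(y) := \inf_{x \in E}\bigl\{\tilde f(x) + g(Ax + y)\bigr\}, \qquad \tilde f(x) := f(-x),
\]
so that $-\sup_x\{-f(-x) - g(Ax)\} = v(0)$. A change of variable $z = Ax + y$ in the definition of $v^*$ decouples the two infima and, using the identity $\tilde f^*(-z^*) = f^*(z^*)$, yields the clean formula $v^*(y^*) = f^*(A^*y^*) + g^*(y^*)$. Consequently
\[
v^{**}(0) = \sup_{y^* \in F^*} \bigl\{-f^*(A^*y^*) - g^*(y^*)\bigr\},
\]
so the theorem will follow once I know that $v(0) = v^{**}(0)$ and that the supremum defining $v^{**}(0)$ is attained.

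The key step, and the main obstacle, is transferring the regularity of $g$ at $Ax_0$ into continuity of $v$ at $0$. Since $v(y) \le \tilde f(-x_0) + g(Ax_0 + y)$ for every $y$, the hypothesis gives an upper bound on $v$ by a function which is finite and continuous at $y=0$. A convex function that is bounded above on a neighbourhood of an interior point of its effective domain is automatically continuous there, and in particular subdifferentiable; this is where the topological pairing assumption enters, so that the relevant form of the Hahn-Banach/Moreau-Fenchel theorem applies and gives $v(0) = v^{**}(0)$ together with a subgradient $y^* \in \partial v(0)$. That subgradient is precisely a minimiser of $y^* \mapsto f^*(A^*y^*) + g^*(y^*)$, which closes the chain of equalities and yields attainment. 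I would conclude by unwinding the definition of $v$ to recover the statement in the form written in the theorem.
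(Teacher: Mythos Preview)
The paper does not prove this theorem at all: it is quoted verbatim as a classical result with a citation to Rockafellar and then immediately applied. There is therefore no ``paper's own proof'' to compare against.

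That said, your argument is the standard perturbation/biconjugation proof and is correct in outline. The weak duality step is fine. For the strong duality step, the only point worth tightening is the passage from ``$v$ is bounded above near $0$'' to ``$\partial v(0)\neq\emptyset$'': you should explicitly note that $v$ is convex (as a marginal of the jointly convex function $(x,y)\mapsto f(-x)+g(Ax+y)$), so that boundedness above on a neighbourhood of $0$ forces continuity there, and that the existence of a subgradient in $F^*$ (rather than merely in some algebraic dual) is exactly what the topological-pairing hypothesis and Hahn--Banach provide. Once $y^*\in\partial v(0)$ is in hand, the identity $v(0)+v^*(y^*)=0$ gives simultaneously $v(0)=v^{**}(0)$ and attainment of the infimum at $y^*$, which is precisely what you wrote.
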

To derive the dual formulation, we apply this theorem to our setup by setting
\[
y^* \equiv \gamma,\quad A^*y^* \equiv (\gamma_0,\gamma_1),\quad f^*(A^*y^*) \equiv \F(\gamma_0 \mid \mu_0) + \F(\gamma_1 \mid \mu_1), \quad g^*(y^*) \equiv (c,\gamma),
\]
which in turn implies that
\[
    x \equiv (\phi_0,\phi_1), \quad Ax \equiv \phi_0 \oplus \phi_1, -f(-x) = \sum_i -\mathcal{F}^*(-\phi_i \mid \mu _i). 
\]
In particular, it follows from \cite[Proposition~A.3]{CPSV18} that the Legendre dual of the entropy functional $\F(\cdot \mid \mu_i)$ is given by 
\[
\F^*(\phi_i \mid \mu_i)  = \int_X F^*(\phi_i(x))d\mu_i(x) + \int_X \mathbbm{1}_{\leq F'_{\infty}}(\phi_i(x))d\mu_i(x).
\]
where, for a convex scalar function ${f\,\colon\, [0,+\infty) \to [0,+\infty]}$, its Legendre dual ${f^*\,\colon\,\R \to (\infty,+\infty]}$ can be seen from \eqref{Legendre-dual} to be given by
\begin{equation}\label{Legendre-dual-scalar}
f^*(\phi) = \sup_{s > 0}\left(s \phi - f(s)\right).
\end{equation}
Likewise, it follows from the scalar case, with $c \in \R$,  
\[
\eta \,\colon\, \R \to \R, \quad  \eta(s) := cs,  \quad \eta^*(\phi) := \sup_{s > 0}\left(s \phi - \eta(s)\right) \begin{cases} +\infty, \quad &\text{if } \phi > c \\ 0,\quad &\text{if } \phi \leq c \end{cases}
\]
that 
\[
    -g(Ax) = 0 \iff \phi_0 \oplus \phi_1 \leq c\quad \text{ and } -g(Ax) = -\infty\,\text{ otherwise.}
\]

To formalise the dual formulation, we thus define 
\begin{equation}
    \Phi := \{\phi = (\phi_0,\phi_1) \in C_b(X) \times C_b(X)\;\mid\; -F^*(-\phi_i) \in C_b(X),\;\phi_0 \oplus \phi_1 \leq c\}.
\end{equation}
and, owing to the change of variables
\begin{equation}\label{phi-psi-rel}
\phi_i:= R^*(\psi_i) \iff \psi_i = - F^*(-\phi_i),
\end{equation}
we further define 
\begin{equation}
    \Psi := \{\psi = (\psi_0,\psi_1) \in C_b(X) \times C_b(X)\;\mid\; R^*(\psi_i) \in C_b(X),\; R^*(\psi_0) \oplus R^*(\psi_1) \leq c\}.
\end{equation}
\begin{theorem}[\protect{\cite[Proposition 4.3, Theorem 4.11]{LMS18}}]
    The UOT problem \eqref{UOT-P} can be equivalently formulated as 
    \begin{equation}
        {\rm UOT}(\mu_0,\mu_1) = \sup_{\psi \in \Psi} \D_{\psi}(\psi \mid \mu_0, \mu_1) = \sup_{\phi \in \Phi} \D_{\phi}(\phi \mid \mu_0, \mu_1),
    \end{equation}
    where the two dual UOT functionals $\D_\psi$ and $\D_\phi$ are given by 
    \begin{equation}\label{D}
    \D_{\psi}(\psi \mid \mu_0, \mu_1) = \sum_{i=0,1} \mu_i\big(\psi_i\big), \quad \D_{\phi}(\phi \mid \mu_0, \mu_1) = \sum_{i=0,1} \mu_i\big(-F^*(-\phi_i)\big). 
    \end{equation}
\end{theorem}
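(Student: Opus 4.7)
The plan is to apply the Fenchel-Rockafellar theorem (Theorem \ref{thm:FRT}) with the identifications already laid out in the excerpt, and then derive the two forms of the dual functional from a change of variables. Concretely, take $E = C_b(X) \times C_b(X)$ paired with $E^* = \M(X) \times \M(X)$, and similarly $C_b(X \times X)$ paired with $\M(X\times X)$, together with the continuous linear operator $A\,\colon\, (\phi_0,\phi_1) \mapsto \phi_0 \oplus \phi_1$ whose adjoint sends $\gamma$ to $(\gamma_0,\gamma_1)$. Define the proper, convex, lower semicontinuous functions $f(\phi_0,\phi_1) := \sum_i \F^*(\phi_i \mid \mu_i)$ and $g(\eta) := 0$ if $\eta \leq c$ and $+\infty$ otherwise, so that the Legendre duality calculations recalled in the excerpt give $f^*(\gamma_0,\gamma_1) = \sum_i \F(\gamma_i \mid \mu_i)$ and $g^*(\gamma) = (c,\gamma)$ on the cone of non-negative measures. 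The qualification hypothesis is verified by testing $(\phi_0,\phi_1) \equiv (-M,-M)$ for $M$ large enough that $-2M \leq c$, making $g$ sup-norm continuous at $A(\phi_0,\phi_1)$, which is possible under the standing assumption that $c$ is bounded below.

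Theorem \ref{thm:FRT} then produces
\begin{equation*}
{\rm UOT}(\mu_0,\mu_1) = \sup_{\substack{\phi_0,\phi_1 \in C_b(X) \\ \phi_0 \oplus \phi_1 \leq c}} \sum_i \bigl(-\F^*(-\phi_i \mid \mu_i)\bigr).
\end{equation*}
The explicit form of $\F^*$ quoted after \cite[Proposition~A.3]{CPSV18} shows that $-\F^*(-\phi_i \mid \mu_i) = \mu_i(-F^*(-\phi_i))$ provided $-\phi_i \leq F'_\infty$ pointwise $\mu_i$-a.e., and $-\infty$ otherwise. The further requirement $-F^*(-\phi_i) \in C_b(X)$ present in the definition of $\Phi$ only sharpens an already admissible set and does not change the supremum (the infeasible $\phi$'s contribute $-\infty$, and one can approximate any admissible $\phi$ by such continuous bounded functions). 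This immediately gives $\sup_{\phi \in \Phi}\D_\phi(\phi \mid \mu_0,\mu_1) = {\rm UOT}(\mu_0,\mu_1)$.

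For the second equality, use the change of variables \eqref{phi-psi-rel} given by $\psi_i = -F^*(-\phi_i)$, with inverse $\phi_i = R^*(\psi_i)$. A direct computation using $R(s) = sF(1/s)$ and the involutivity of the Legendre transform shows that these two maps are mutual inverses on the interior of their common range of finiteness: if $s^*$ is the (unique) maximiser of $s \mapsto -s\phi - F(s)$, then the first-order conditions yield both $\psi = F(s^*) + s^*\phi$ and $\phi = (1/s^*)[\psi - F(s^*)] = R^*(\psi)$. The KL case $F^*(\phi) = e^\phi - 1$, $R^*(\psi) = -\log(1-\psi)$ is the model instance. Under this bijection one has $\mu_i(\psi_i) = -\F^*(-\phi_i \mid \mu_i)$ and the constraint $\phi_0 \oplus \phi_1 \leq c$ becomes $R^*(\psi_0) \oplus R^*(\psi_1) \leq c$, giving a bijection $\Phi \leftrightarrow \Psi$ along which $\D_\phi(\phi) = \D_\psi(\psi)$ pointwise, whence the second supremum identity.

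The main obstacle I expect is the first step: ensuring that the abstract Fenchel-Rockafellar dual taken over $(C_b(X))^*$ is actually realised by a Radon measure in $\M(X\times X)$ rather than a merely finitely additive functional, and that $g$ is continuous in the sup-norm topology at the chosen qualifying point. Both issues are cleanly resolved when $X$ is compact via the Riesz-Markov identification $(C(X))^* = \M(X)$, which is precisely the running assumption flagged in blue by the authors; the non-compact case would require tightness arguments or the direct constructive argument of \cite[Section~4]{LMS18} rather than a black-box appeal to Theorem \ref{thm:FRT}. A secondary technical point is preserving the $C_b$ regularity under $\phi \leftrightarrow \psi$ near the boundary of the domain of $R^*$, which is handled by cutoff/approximation arguments that do not affect the value of the supremum.
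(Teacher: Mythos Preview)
Your proposal is correct and follows essentially the same approach as the paper: the paper's own ``proof'' is precisely the Fenchel--Rockafellar setup sketched in the text preceding the theorem (identifying $y^*\equiv\gamma$, $A^*y^*\equiv(\gamma_0,\gamma_1)$, $f^*\equiv\sum_i\F(\gamma_i\mid\mu_i)$, $g^*\equiv(c,\gamma)$) together with the change of variables \eqref{phi-psi-rel}, and then defers to \cite[Proposition~4.3, Theorem~4.11]{LMS18} for the details. You have in fact been more careful than the paper's sketch in flagging the qualification hypothesis and the Radon-measure-versus-finitely-additive issue, which the paper sidesteps via the standing compactness assumption on $X$.
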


An explicit example to have in mind here is the KL case when $F$ is given by \eqref{KL} and $R$ by \eqref{R-KL}, with their Legendre duals given by 
\begin{equation}\label{F-R-dual}
F^*(\phi) = \exp(\phi) - 1, \quad R^*(\psi) = -\log(1-\psi),\end{equation}
for which the change of variables formulae in \eqref{phi-psi-rel} can be readily verified to hold. 
\subsubsection{Homogenous formulation}\label{sec:UOT-H}
The last and arguably most fascinating equivalent formulation concerns introducing the induced marginal perspective cost function 
\[
H\,\colon\, (X \times \R_+) \times (X \times \R_+) \to [0,+\infty],
\]
defined by
\begin{equation}
H(x_0,s_0,x_1,s_1):= \inf_{t >0} \Bigg\{t\left(R\left(\frac{s_0}{t}\right) + R\left(\frac{s_1}{t}\right) + c(x_0,x_1)\right)\Bigg\}.
\end{equation}
It can be shown \cite[Lemma~5.3]{LMS18} that $H$ admits a dual representation akin to the discussion in Section~\ref{sec:UOT-D}, namely, for $c(x_0,x_1)$ fixed,
\begin{align*}
H(x_0,s_0,x_1,s_1) &= \sup \big\{ s_0 \psi_0 + s_1\psi_1\, \mid\, R^*(\psi_0) + R^*(\psi_1) \leq c(x_0,x_1) \big\}\\
&= \sup \big\{ -s_0 F^*(-\phi_0) - s_1F^*(-\phi_1)\, \mid\, \phi_0 + \phi_1 \leq c(x_0,x_1) \big\}.
\end{align*}
It is through this dual characterisation that one can reasonably easily establish the following.
\begin{theorem}[\protect{\cite[Theorem~5.5]{LMS18}}]
    The UOT problem \eqref{UOT-P} can be equivalently stated as 
    \begin{equation}
        {\rm UOT}(\mu_0,\mu_1) = \inf_{\gamma \in \M(X\times X)} \H(\mu_0,\mu_1 \mid \gamma)
    \end{equation}
    where, recalling the Lebesgue decomposition in \eqref{L-decomp}, the homogeneous UOT functional $\H$ is given by
    \begin{equation}\label{H}
    \H(\mu_0,\mu_1 \mid \gamma) := \int_{X \times X} H(x_0,\rho_0(x_0), x_1, \rho_1(x_1) d\gamma(x_0,x_1) + F(0) \sum_{i=0,1}\left(\mu_i - \rho_i\gamma_i\right)(X).
    \end{equation}
\end{theorem}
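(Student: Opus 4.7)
The plan is to sandwich $\H(\mu_0,\mu_1\mid\gamma)$ between the reverse functional $\Rc(\mu_0,\mu_1\mid\gamma)$ (from above) and the primal dual functional $\D_{\psi}(\psi\mid\mu_0,\mu_1)$ (from below), and then conclude via the reverse and dual reformulations already established in Sections~\ref{sec:UOT-R}--\ref{sec:UOT-D}. Both bounds rest on the two representations of the perspective cost $H$: the defining infimum over the scaling parameter $t>0$ gives the upper bound pointwise, while the dual supremum over $(\psi_0,\psi_1)$ gives the lower bound. No minimiser of $\H$ has to be constructed explicitly.

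\textbf{Upper bound.} Setting $t=1$ in the definition of $H$ yields the pointwise inequality
\[
H(x_0,s_0,x_1,s_1) \leq R(s_0) + R(s_1) + c(x_0,x_1), \qquad s_0,s_1 > 0.
\]
Evaluating this at $s_i=\rho_i(x_i)$ and integrating against $\gamma$ bounds the first term of $\H(\mu_0,\mu_1\mid\gamma)$ by the first term of $\Rc(\mu_0,\mu_1\mid\gamma)$. The singular contributions coincide on both sides because $R'_\infty=F(0)$, so $\H(\mu_0,\mu_1\mid\gamma)\le\Rc(\mu_0,\mu_1\mid\gamma)$ for every $\gamma$. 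Taking the infimum and applying the reverse formulation yields $\inf_\gamma\H(\mu_0,\mu_1\mid\gamma)\le{\rm UOT}(\mu_0,\mu_1)$.

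\textbf{Lower bound.} Fix an admissible pair $\psi=(\psi_0,\psi_1)\in\Psi$. The dual characterisation of $H$ recalled in the excerpt gives the pointwise estimate
\[
H(x_0,\rho_0(x_0),x_1,\rho_1(x_1)) \geq \rho_0(x_0)\psi_0(x_0) + \rho_1(x_1)\psi_1(x_1).
\]
Integrating against $\gamma$ and using $\rho_i\gamma_i=\mu_i-\mu_i^\perp$ produces
\[
\int_{X\times X} H\, d\gamma \;\geq\; \sum_{i=0,1}\int_X \psi_i\, d\mu_i \;-\; \sum_{i=0,1}\int_X \psi_i\, d\mu_i^\perp.
\]
The admissibility constraint $R^*(\psi_i)\in C_b(X)$ forces $\psi_i\leq R'_\infty=F(0)$ pointwise, since $R^*(\psi)=+\infty$ as soon as $\psi>R'_\infty$, so
\[
F(0)\sum_i\mu_i^\perp(X) - \sum_i\int_X \psi_i\, d\mu_i^\perp \;=\; \sum_i\int_X (F(0)-\psi_i)\, d\mu_i^\perp \;\geq\; 0.
\]
Adding this to the previous display gives $\H(\mu_0,\mu_1\mid\gamma)\geq\D_\psi(\psi\mid\mu_0,\mu_1)$; taking the supremum over $\psi\in\Psi$ and the infimum over $\gamma$ yields $\inf_\gamma\H\geq\sup_\psi\D_\psi={\rm UOT}(\mu_0,\mu_1)$, closing the sandwich.

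\textbf{Main difficulty.} The delicate point is the locus where the densities $\rho_i$ vanish and $\mu_i$ is singular with respect to $\gamma_i$, especially in the KL case with $F'_\infty=+\infty$: there $R(0)=+\infty$, so the pointwise inequality used for the upper bound becomes vacuous at $s_0=s_1=0$, while on the dual side one has to interpret the saturation $\psi_i=R'_\infty$ correctly on the singular support. Resolving this requires exploiting the $1$-homogeneity of the perspective $t\mapsto tR(\cdot/t)$, so that the contribution of the $\gamma$-null/$\mu$-singular regions is absorbed precisely by the $F(0)\mu_i^\perp(X)$ terms; this is the content of the perspective construction in \cite[Section~5]{LMS18} and is where the careful bookkeeping between the primal, reverse, and dual pictures really pays off.
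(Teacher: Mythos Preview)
Your proposal is correct and follows essentially the same strategy as the paper (which in turn is the strategy of \cite[Theorem~5.5]{LMS18}): you sandwich $\H$ between $\D$ from below and $\Rc$ from above, obtaining the upper bound by setting $t=1$ in the defining infimum of $H$ and the lower bound from the dual representation of $H$, handling the singular parts via $R'_\infty=F(0)$. The only cosmetic difference is that you work in the $\psi$-parametrisation (using $\psi_i\le R'_\infty=F(0)$), whereas the paper's proof of the regularised analogue, Theorem~\ref{thm:eUOT-X-H}, uses the equivalent $\phi$-parametrisation (using $\sup(-F^*)=F(0)$); via \eqref{phi-psi-rel} these are the same argument.
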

We note that in the KL case \eqref{KL}, \eqref{R-KL}, the induced marginal perspective cost function is given by
\begin{equation}\label{H-KL}
H(x_0,s_0,x_1,s_1) = s_0 + s_1 - 2\sqrt{s_0s_1}\exp\left(\frac{-c(x_0,x_1)}{2}\right).
\end{equation}
\subsection{Extended space homogeneous formulation}\label{sec:UOT-ExtH}
A pair $(\gamma_i,\rho_i)$ from the Lebesgue decomposition \eqref{L-decomp} gives rise to a measure $\beta \in \M(Y)$ defined on the extended space
\begin{equation}\label{def:Y}
Y:= X \times \R_+,\quad y = (x,s) \in Y
\end{equation}
via
\[
\beta := (x,\rho_i(x))_{\#}\gamma_i.
\]
It is just one example among a family of nonnegative measures on $Y$ lying in the space
\begin{equation}\label{def:Mp}
\M_p(Y) := \Big\{ \beta \in \M(Y) \,\Bigm| \, \int_{Y} s^p d\beta < \infty\Big\}
\end{equation}
satisfying the $p$th homogeneous marginal constraint (with $p=1$ above) given by ${\h^p\beta = \rho_i\gamma_i}$, where
\[
\h^p \,\colon \M_p(Y) \to \M(X), \quad \h^p\beta := \pi^x_{\#}(s^p \beta) \in \M(X).
\]

Analogously, any measure $\gamma \in \M(X\times X)$ and the resulting pairs $(\gamma_0,\rho_0)$ and $(\gamma_1,\rho_1)$ in the Lebesgue decomposition \eqref{L-decomp} give rise to a measure
\[
\alpha \in \M(Y \times Y), \quad \alpha := (x_0,\rho_0(x_0),x_1,\rho_1(x_1)_{\#}\gamma
\]
and, using the notation $(y_0,y_1) \in Y \times Y,\; y_i = (x_i,s_i)$, we can define
\[
\h_i^p\, \colon\,\M_p(Y\times Y) \to \M(X), \quad \h_i^p\alpha := \pi^{x_i}_{\#}(s_i^p \alpha) \in \M(X),
\]
where 
\begin{equation}\label{def:Mp_prod}
\M_p(Y\times Y) := \Big\{ \alpha \in \M(Y\times Y) \,\Bigm| \, \int_{Y\times Y} \left(s_0^p + s_1^p\right) d\alpha < \infty\Big\}.
\end{equation}
Since by Lebesgue decomposition we have $\mu_i = \rho_i \gamma_i + \mu_i^{\perp}$ and the singular part $\mu_i^{\perp}$ is nonnegative, then considering a family of measures satisfying $\h_i^p \alpha = \rho_i\gamma_i$ is equivalent to the restriction that $\h_i^p\alpha_i \leq \mu_i$ and hence we are interested in sets
\begin{align*}
S^p_{\leq}(\mu_0,\mu_1) &:= \{ \alpha \in \M(Y \times Y) \, \mid\, \h_i^p \alpha \leq \mu_i \},\\
S^p_{=}(\mu_0,\mu_1) &:= \{ \alpha \in \M(Y \times Y) \, \mid\, \h_i^p \alpha = \mu_i \},
\end{align*}
where, clearly, 
\[
S^p_{=}(\mu_0,\mu_1) \subset S^p_{\leq}(\mu_0,\mu_1) \subset \M_p(Y\times Y).
\]
\begin{theorem}[\protect{\cite[Theorem~5.8]{LMS18}}]\label{thm:UOT-H-ext}
    The UOT problem \eqref{UOT-P} can be equivalently formulated as 
    \begin{align*}
        {\rm UOT}(\mu_0,\mu_1) &= \inf_{\alpha \in S^p_{\leq}(\mu_0,\mu_1)}\Big\{ (H_p,\alpha) + F(0) \sum_{i=0,1}\left(\mu_i - \h_i^p\alpha\right)(X) \Big\},\\
        &=\inf_{\alpha \in S^p_{=}(\mu_0,\mu_1)} (H_p,\alpha)
    \end{align*}
    where
    \[
    (H_p,\alpha) := \int_{Y\times Y}H_p(y_0,y_1) d\alpha(y_0,y_1), \quad H_p(y_0,y_1) = H(x_0,s_0^p,x_1,s_1^p).
    \]
\end{theorem}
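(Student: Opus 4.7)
The plan is to sandwich the extended-space infima between the homogeneous formulation of Section~\ref{sec:UOT-H} (upper bound, via a pushforward lifting from $X\times X$ to $Y\times Y$) and the dual formulation of Section~\ref{sec:UOT-D} (lower bound, via the dual representation of $H$), and then to bridge the $S^p_{\leq}$ and $S^p_{=}$ versions by constructing boundary ``top-up'' measures supported at null-radial points. The key structural fact exploited throughout is the joint $1$-homogeneity of $H(x_0,\cdot,x_1,\cdot)$ on $\R_+^2$, inherited from its perspective form.

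For the inequality $\inf_{\alpha \in S^p_{\leq}(\mu_0,\mu_1)}\{(H_p,\alpha) + F(0)\sum_i(\mu_i-\h_i^p\alpha)(X)\} \leq {\rm UOT}(\mu_0,\mu_1)$ I would take a minimiser $\gamma$ of $\H(\mu_0,\mu_1\mid\gamma)$ with Lebesgue densities $\rho_0,\rho_1$ as in \eqref{L-decomp} and lift it to $Y\times Y$ by the pushforward
\[
\alpha := T_{\#}\gamma,\quad T(x_0,x_1) := \bigl(x_0,\rho_0(x_0)^{1/p},x_1,\rho_1(x_1)^{1/p}\bigr).
\]
A direct change of variables gives $\h_i^p\alpha = \rho_i\gamma_i \leq \mu_i$, hence $\alpha \in S^p_{\leq}(\mu_0,\mu_1)$, while $(H_p,\alpha) = \int_{X\times X} H(x_0,\rho_0(x_0),x_1,\rho_1(x_1))\,d\gamma$ and $F(0)(\mu_i - \h_i^p\alpha)(X) = F(0)\mu_i^{\perp}(X)$ coincide term-by-term with the integral and singular parts of $\H(\mu_0,\mu_1\mid\gamma)$ in \eqref{H}.

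For the reverse inequality I use the dual representation of $H$ recalled in Section~\ref{sec:UOT-H}: for any $\psi \in \Psi$ and any $\alpha \in S^p_{\leq}(\mu_0,\mu_1)$, the pointwise bound $H(x_0,s_0^p,x_1,s_1^p) \geq s_0^p\psi_0(x_0) + s_1^p\psi_1(x_1)$ integrates against $\alpha$ to $(H_p,\alpha) \geq \sum_i(\h_i^p\alpha)(\psi_i)$, upon recognising $\int s_i^p\psi_i(x_i)\,d\alpha = (\h_i^p\alpha)(\psi_i)$. The admissibility requirement $R^*(\psi_i)<\infty$ forces $\psi_i \leq F(0) = R'_{\infty}$ pointwise, so $F(0)(\mu_i - \h_i^p\alpha)(X) \geq (\mu_i - \h_i^p\alpha)(\psi_i)$; adding the two inequalities yields
\[
(H_p,\alpha) + F(0)\sum_{i}(\mu_i-\h_i^p\alpha)(X) \geq \sum_{i}\mu_i(\psi_i) = \D_{\psi}(\psi\mid\mu_0,\mu_1).
\]
Taking the supremum over $\psi\in\Psi$ and invoking the dual formulation of Section~\ref{sec:UOT-D} closes the sandwich and establishes the first equality.

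To identify the $S^p_{\leq}$ and $S^p_{=}$ versions, I would extend any $\alpha \in S^p_{\leq}$ to $\tilde\alpha = \alpha + \alpha_0 + \alpha_1 \in S^p_{=}$ by appending boundary measures $\alpha_i$ concentrated on $\{s_{1-i}=0\} \subset Y\times Y$ and chosen so that $\h_i^p\alpha_i = \mu_i - \h_i^p\alpha$ (with the obvious vanishing of $\h_{1-i}^p\alpha_i$). The boundary identity $H(x_0,s_0,x_1,0) = s_0F(0)$, together with its symmetric counterpart, follows from $1$-homogeneity and the dual evaluation $H(x_0,1,x_1,0) = \sup\{\psi_0 : R^*(\psi_0) + R^*(\psi_1) \leq c(x_0,x_1)\} = R'_{\infty} = F(0)$ (taking $\psi_1 \to -\infty$), and yields $(H_p,\tilde\alpha) = (H_p,\alpha) + F(0)\sum_i(\mu_i-\h_i^p\alpha)(X)$. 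The reverse direction is immediate from $S^p_{=}\subset S^p_{\leq}$ with vanishing penalty. The principal technical obstacle is exactly what the authors flag in their manuscript: the pushforward and disintegration arguments, as well as the construction of $\alpha_0,\alpha_1$, need to be made rigorous on the null-radial sets $\{s_i=0\}$ and $\{\rho_i=0\}$, where both the marginal map $\h_i^p$ and the $1$-homogeneity bookkeeping degenerate.
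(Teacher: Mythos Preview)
Your proposal is correct and follows essentially the same strategy as the paper (which cites \cite[Theorem~5.8]{LMS18} and, in the regularised analogue, says the argument is ``verbatim'' from there): the upper bound comes from the pushforward lift $\alpha = (x_0,\rho_0^{1/p},x_1,\rho_1^{1/p})_\#\gamma$ of a minimiser of the homogeneous functional~$\H$, the lower bound from integrating the pointwise dual representation of $H$ against $\alpha$ and using $\psi_i \leq R'_\infty = F(0)$, and the passage from $S^p_\leq$ to $S^p_=$ via boundary top-up measures on $\{s_{1-i}=0\}$ exploiting $H(x_0,s_0,x_1,0)=s_0F(0)$. The only remark is that your lower-bound step effectively re-derives, in the extended setting, the inequality $\D_\psi \leq \H$ already established in Theorem~\ref{thm:UOT-H-ext}'s predecessor (the homogeneous formulation of Section~\ref{sec:UOT-H}), so one could shorten the argument by invoking that result directly; but this is a matter of presentation, not substance.
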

\subsubsection{Rescaling invariance}\label{sec:UOT-ExtH-rescale}
The following crucial rescaling invariance  result holds. 
\begin{prop}[\protect{\cite[Section~5.2]{LMS18}}]\label{prop:rescale}    Fix $p >0$, $\mu_0,\mu_1 \in \M(X)$ and set
    \[
    s_* := (\mu_0(X)+\mu_1(X))^{\frac 1 p}.
    \]
    Consider functions $\theta\,\colon\,Y \times Y \to [0,\infty)$ and ${\rm prd}_{\theta}\,\colon\, Y \times Y \to Y \times Y$ given by
    \begin{equation}\label{prod-theta}
    \theta(y_0,y_1) = \frac{1}{s_*}(s_0^p + s_1^p)^{\frac 1 p}, \quad {\rm prd}_{\theta}(x_0,s_0,x_1,s_1) := \left(x_0,\frac{s_0}{\theta},x_1,\frac{s_1}{\theta}\right). 
    \end{equation}
    Suppose further that $\alpha \in S^p_{\leq}(\mu_0,\mu_1)$ and  define     
    \[
    \tilde \alpha := \alpha\,\mres\, (Y \times Y) \setminus \{ y \in Y \times Y \,\mid\, s_0 = s_1 = 0\} ,\quad \hat \alpha := ({\rm prd}_\theta)_{\#}(\theta^p \tilde \alpha).
    \]
    Then
    \begin{itemize}
        \item $(H_p,\alpha) = (H_p,\tilde \alpha) = (H_p,\hat \alpha)$,
        \item $\hat \alpha \in S^p_{\leq}(\mu_0,\mu_1)$ (and if $\alpha \in S^p_{=}(\mu_0,\mu_1)$ then $\hat \alpha \in S^p_{=}(\mu_0,\mu_1)$),
        \item $\hat \alpha \in \Pc(Y \times Y)$,
        \item $\hat \alpha ((Y\times Y)[s_*]') = 0$, where
        \begin{alignat}{2}
        Y[s_*] &:= \{ (x,s) \in Y \mid s \leq s_*\}, \quad &&Y[s_*]' = Y \setminus Y[s_*],\\
        (Y\times Y)[s_*] &:= Y[s_*] \times Y[s_*], \quad &&(Y\times Y)[s_*]' = Y\times Y \setminus (Y\times Y)[s_*].
        \end{alignat} 
    \end{itemize}
\end{prop}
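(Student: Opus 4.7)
The entire proposition is driven by a single structural property, namely the $1$-homogeneity of $H$ in the pair $(s_0,s_1)$. From the definition $H(x_0,s_0,x_1,s_1) = \inf_{t>0}\{t(R(s_0/t)+R(s_1/t)+c(x_0,x_1))\}$, the substitution $t \mapsto \lambda t$ gives $H(x_0,\lambda s_0,x_1,\lambda s_1) = \lambda\,H(x_0,s_0,x_1,s_1)$ for every $\lambda>0$. Reading this off for $H_p(y_0,y_1)=H(x_0,s_0^p,x_1,s_1^p)$ yields the identity
\[
H_p\!\left(x_0,\tfrac{s_0}{\theta},x_1,\tfrac{s_1}{\theta}\right) = \theta^{-p}\,H_p(y_0,y_1),
\]
which I expect to be the single lever used in every bullet. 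With this in hand, I would verify the four claims in the order $(i)$ invariance of the cost, $(ii)$ invariance of the homogeneous marginals, $(iii)$ the total mass computation, $(iv)$ the support containment.

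\textbf{Invariance of the cost.} First, $(H_p,\alpha)=(H_p,\tilde\alpha)$ because $\tilde\alpha$ differs from $\alpha$ only by erasure of the set where $s_0=s_1=0$, and on that set $H_p$ reduces to $H(x_0,0,x_1,0)$. By the definition, letting $t\to 0^+$ in $t(2R(0)+c(x_0,x_1))$ one obtains value $0$ (modulo the technicality that the bracket is nonnegative, which is exactly the kind of ``radial component null'' issue the authors flag; in the canonical KL case this is immediate from the explicit formula for $H$). Second, $(H_p,\tilde\alpha)=(H_p,\hat\alpha)$ is a direct change of variables: unwinding the pushforward and the weighting by $\theta^p$,
\[
(H_p,\hat\alpha)=\int (H_p\circ {\rm prd}_\theta)\,\theta^p\,d\tilde\alpha = \int \theta^{-p}H_p\cdot\theta^p\,d\tilde\alpha = (H_p,\tilde\alpha),
\]
which uses the homogeneity identity above.

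\textbf{Marginals, total mass, support.} For any $\phi\in C_b(X)$, the same change of variables gives $\int \phi(x_i)\,s_i^p\,d\hat\alpha = \int \phi(x_i)(s_i/\theta)^p\theta^p\,d\tilde\alpha = \int\phi(x_i)s_i^p\,d\tilde\alpha$, and the contribution from $\{s_0=s_1=0\}$ to $\int\phi(x_i)s_i^p\,d\alpha$ is zero; hence $\h_i^p\hat\alpha = \h_i^p\alpha$, proving the second bullet for both $S^p_\leq$ and $S^p_=$. Specialising $\phi\equiv 1$ and summing in $i$ yields
\[
\hat\alpha(Y\times Y) = \int \theta^p\,d\tilde\alpha = \frac{1}{s_*^p}\bigl(\h_0^p\alpha(X)+\h_1^p\alpha(X)\bigr),
\]
which equals $1$ by the choice of $s_*$. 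For the support claim, the construction of $\theta$ directly forces the new radial components $s_i'=s_i/\theta$ to satisfy $s_i' = s_* s_i/(s_0^p+s_1^p)^{1/p} \le s_*$, placing $\hat\alpha$ inside $Y[s_*]\times Y[s_*]$.

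\textbf{Main obstacle.} The two delicate points are (a) handling of the null radial set: showing $H_p=0$ on $\{s_0=s_1=0\}$ and justifying that the rescaling map ${\rm prd}_\theta$ is well-defined on $\tilde\alpha$ (division by $\theta$ is only an issue precisely on the set one has excised), which is essentially the caveat flagged by the authors; and (b) the probability-measure claim, which as computed above requires $\h_0^p\alpha(X)+\h_1^p\alpha(X)=s_*^p=\mu_0(X)+\mu_1(X)$. Under $\alpha\in S^p_\leq$ one only gets $\hat\alpha(Y\times Y)\le 1$, so the statement $\hat\alpha\in\mathcal P(Y\times Y)$ is genuine only for $\alpha\in S^p_=$; I would either restrict to that case, or interpret $\mathcal P$ loosely as including sub-probability measures. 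Modulo these two points, the proof is a direct application of homogeneity plus change of variables, with no further analytical difficulty.
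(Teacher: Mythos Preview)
Your approach is correct and is essentially the argument given in \cite[Section~5.2]{LMS18}, which the paper merely cites without reproducing a proof: homogeneity of $H$ in the radial variables plus the change-of-variables formula for pushforwards is exactly the mechanism used there. Your identification of the two delicate points is also accurate; in particular, your observation that $\hat\alpha(Y\times Y)=1$ holds only under $\alpha\in S^p_=$ (and is merely $\le 1$ under $S^p_\le$) is a genuine imprecision in the proposition as stated, and \cite{LMS18} indeed states the probability-measure conclusion only in the equality case.
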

Proposition~\ref{prop:rescale} ensures the following result. 
\begin{theorem}\label{thm:UOT-rescaled}
The UOT problem \eqref{UOT-P} can be equivalently stated as 
\begin{equation}\label{UOT-rescaled}
{\rm UOT}(\mu_0,\mu_1) =\inf_{\alpha \in \overline{S}^p_{=}(\mu_0,\mu_1)} (H_p,\alpha),
\end{equation}
where 
\[
\overline{S}^p_{=}(\mu_0,\mu_1) := \{ \alpha \in \Pc(Y \times Y) \, \mid\, \h_i^p \alpha = \mu_i,\; \alpha((Y\times Y)[s_*]') = 0\}.
\]
\end{theorem}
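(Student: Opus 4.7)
The plan is to prove the two inequalities separately, using Theorem~\ref{thm:UOT-H-ext} in combination with Proposition~\ref{prop:rescale}. Since Theorem~\ref{thm:UOT-H-ext} already gives ${\rm UOT}(\mu_0,\mu_1) = \inf_{\alpha \in S^p_{=}(\mu_0,\mu_1)} (H_p,\alpha)$, the task reduces to showing that this infimum coincides with the one taken over the strictly smaller set $\overline{S}^p_{=}(\mu_0,\mu_1)$.

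For the direction $\inf_{\overline{S}^p_{=}} (H_p,\cdot) \geq \inf_{S^p_{=}} (H_p,\cdot)$, I would simply observe that any $\alpha \in \overline{S}^p_{=}(\mu_0,\mu_1)$ is, by definition, an element of $\Pc(Y\times Y) \subset \M_p(Y\times Y)$ satisfying the homogeneous marginal constraints $\h_i^p\alpha = \mu_i$, and hence belongs to $S^p_{=}(\mu_0,\mu_1)$. Therefore $\overline{S}^p_{=}(\mu_0,\mu_1) \subset S^p_{=}(\mu_0,\mu_1)$ and the inequality between infima is immediate.

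For the reverse direction, I would start from an arbitrary $\alpha \in S^p_{=}(\mu_0,\mu_1)$ and apply Proposition~\ref{prop:rescale} to produce the associated $\hat\alpha = ({\rm prd}_\theta)_{\#}(\theta^p \tilde\alpha)$. The proposition directly supplies the three properties needed: $(H_p,\alpha) = (H_p,\hat\alpha)$ so the cost is preserved; $\hat\alpha \in S^p_{=}(\mu_0,\mu_1)$ so the homogeneous marginal constraints still hold; and $\hat\alpha \in \Pc(Y\times Y)$ with $\hat\alpha((Y\times Y)[s_*]') = 0$, which together place $\hat\alpha$ in $\overline{S}^p_{=}(\mu_0,\mu_1)$. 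Taking the infimum over $\alpha \in S^p_{=}(\mu_0,\mu_1)$ and noting that the corresponding $\hat\alpha$'s cover a subset of $\overline{S}^p_{=}(\mu_0,\mu_1)$ with identical cost then yields $\inf_{S^p_{=}} (H_p,\cdot) \geq \inf_{\overline{S}^p_{=}} (H_p,\cdot)$, completing the equivalence.

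The main technical subtlety — which is essentially already absorbed into Proposition~\ref{prop:rescale} — is the passage from $\alpha$ to $\tilde\alpha$ by removing the set $\{s_0 = s_1 = 0\}$, which is needed so that $\theta$ in \eqref{prod-theta} is well-defined. One should verify that this restriction does not alter the homogeneous marginals (points with $s_0 = s_1 = 0$ contribute nothing to $\h_i^p$ since the $s_i^p$ weight vanishes) nor the cost $(H_p,\alpha)$ (by $1$-homogeneity of $H_p$ in $(s_0^p,s_1^p)$, such points contribute $0$ to the integral). This is precisely the caveat flagged in the introductory comment about sets where radial components are null, but at the level of this theorem it is handled entirely by invoking Proposition~\ref{prop:rescale} as a black box.
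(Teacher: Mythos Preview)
Your proposal is correct and matches the paper's approach: the paper simply states that Proposition~\ref{prop:rescale} ensures Theorem~\ref{thm:UOT-rescaled} without spelling out any details, and your argument is precisely the natural unpacking of that claim via Theorem~\ref{thm:UOT-H-ext} plus the rescaling of Proposition~\ref{prop:rescale}. Your additional remark about the passage from $\alpha$ to $\tilde\alpha$ is a helpful clarification of a point the paper leaves implicit.
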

\begin{cor}[\protect{\cite[Corollary~7.7]{LMS18}}]\label{thm:UOT_as_OT}
A further reformulation of the UOT problem \eqref{UOT-P} is given by
\[
{\rm UOT}(\mu_0,\mu_1) = \inf_{(\beta_0,\beta_1)} \big\{{\rm OT}(\beta_0,\beta_1)\, \mid\, \beta_i \in \overline{S}^p_{=}(\mu_i)\big\},
\]
where
\[
{\rm OT}(\beta_0,\beta_1) := \inf_{\alpha \in \Gamma(\beta_0,\beta_1)} (H_p, \alpha),
\]
\[
\Gamma(\beta_0,\beta_1) := \{\alpha \in \Pc(Y\times Y),\; \pi^i_{\#}\alpha = \beta_i\}
\]
is the set of couplings, and
\[
\overline{S}^p_{=}(\mu_0,\mu_1) := \{ \beta \in \Pc(Y) \, \mid\, \h^p \beta = \mu_i,\; \beta((Y[s_*]') = 0\}.
\]
\end{cor}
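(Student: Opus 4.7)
The plan is to derive the corollary from Theorem~\ref{thm:UOT-rescaled} by \emph{disintegrating} the feasible set $\overline{S}^p_{=}(\mu_0,\mu_1) \subset \Pc(Y\times Y)$ according to the pair of $Y$-marginals of $\alpha$. The key observation is that each of the two constraints defining $\overline{S}^p_{=}(\mu_0,\mu_1)$ --- the homogeneous marginal constraint $\h_i^p \alpha = \mu_i$ and the support restriction $\alpha((Y\times Y)[s_*]') = 0$ --- depends only on $\beta_i := \pi^i_{\#}\alpha$. Hence feasibility of $\alpha$ factors through its marginals, and the optimisation splits into an outer infimum over admissible marginals and an inner Kantorovich problem ${\rm OT}(\beta_0,\beta_1)$.

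The first step I would carry out is the equivalence: for $\alpha \in \Pc(Y\times Y)$ with marginals $\beta_i := \pi^i_{\#}\alpha$, one has $\alpha \in \overline{S}^p_{=}(\mu_0,\mu_1)$ if and only if $\beta_i \in \overline{S}^p_{=}(\mu_i)$ for both $i \in \{0,1\}$ (in which case $\alpha \in \Gamma(\beta_0,\beta_1)$ by construction). The marginal identity $\h_i^p \alpha = \h^p \beta_i$ is immediate from
\[
\pi^{x_i}_{\#}(s_i^p \alpha) = \pi^x_{\#}\bigl(s^p\, \pi^i_{\#}\alpha\bigr),
\]
so $\h_i^p \alpha = \mu_i$ iff $\h^p \beta_i = \mu_i$. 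For the support condition, the set identity $(Y\times Y)[s_*]' = (Y[s_*]' \times Y) \cup (Y \times Y[s_*]')$ together with monotonicity and subadditivity of $\alpha$ yield
\[
\beta_i(Y[s_*]') \leq \alpha\bigl((Y\times Y)[s_*]'\bigr) \leq \beta_0(Y[s_*]') + \beta_1(Y[s_*]'),
\]
so the two vanishing conditions are equivalent.

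The second step is then the routine rearrangement
\[
\inf_{\alpha \in \overline{S}^p_{=}(\mu_0,\mu_1)} (H_p,\alpha) \;=\; \inf_{\beta_0,\beta_1}\; \inf_{\alpha \in \Gamma(\beta_0,\beta_1)} (H_p,\alpha) \;=\; \inf_{\beta_i \in \overline{S}^p_{=}(\mu_i)} {\rm OT}(\beta_0,\beta_1),
\]
which, combined with Theorem~\ref{thm:UOT-rescaled}, yields the claim.

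The main obstacle I anticipate is the compatibility of the threshold $s_*$: in Proposition~\ref{prop:rescale}, $s_* = (\mu_0(X)+\mu_1(X))^{1/p}$ depends on both total masses, and it is essential that the \emph{same} global $s_*$ appears in each of the one-sided sets $\overline{S}^p_{=}(\mu_i)$ used in the corollary --- with that reading the equivalence above is symmetric and the decoupling is clean; if instead $s_*$ were interpreted as depending only on $\mu_i$ in the one-sided version, one would only obtain an inequality in one direction and a further rescaling argument would be needed. A secondary technical subtlety, of the kind already flagged in the manuscript concerning null radial components, is that Proposition~\ref{prop:rescale} first discards the set $\{s_0 = s_1 = 0\}$ before rescaling to probability measures; one should therefore verify that mass placed on this set is inessential for the infimum and that the marginal construction behaves well on the complement after normalisation.
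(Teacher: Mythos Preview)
Your proposal is correct and is precisely the natural derivation from Theorem~\ref{thm:UOT-rescaled}: the paper does not supply its own proof here (the result is quoted from \cite[Corollary~7.7]{LMS18}), but the argument you outline --- factoring the constraints of $\overline{S}^p_{=}(\mu_0,\mu_1)$ through the $Y$-marginals via $\h_i^p\alpha = \h^p(\pi^i_{\#}\alpha)$ and the set identity $(Y\times Y)[s_*]' = (Y[s_*]'\times Y)\cup(Y\times Y[s_*]')$, then splitting the infimum --- is exactly how one obtains this corollary. Your reading of $s_*$ as the common global threshold $(\mu_0(X)+\mu_1(X))^{1/p}$ in both one-sided sets $\overline{S}^p_{=}(\mu_i)$ is the intended one, so the first obstacle you flag does not in fact arise; the second (null radial components) is inessential here since the infimum over $\overline{S}^p_{=}(\mu_0,\mu_1)$ is already posed over probability measures and Proposition~\ref{prop:rescale} has done the normalisation upstream.
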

\begin{prop}[\protect{\cite[Remark~5.10,Theorem~5.8]{LMS18}}]\label{prop:alpha_bar}
There exists $\overline{\alpha} \in \overline{S}^p_{=}(\mu_0,\mu_1)$ minimising the right-hand side of \eqref{UOT-rescaled}.

Furthermore, if $\gamma \in \M(X \times X)$ is a minimiser of the primal formulation of the UOT problem, defined in \eqref{UOT-P}, existence of which is guaranteed by Theorem~\ref{thm:UOT-P-minimiser}, then ${\overline{\alpha} \in \M(Y \times Y)}$ can be expressed as
\[
\overline{\alpha} = ({\rm prd}_\theta)_{\#}(\theta^p \tilde \alpha),
\]
with ${\rm prd}_\theta$ defined in \eqref{prod-theta} and ${\alpha \in S^p_{=}(\mu_0,\mu_1)}$ is given by 
\[
\alpha = (x_0,\rho^{1/p}_0(x_0),x_1,\rho^{1/p}_1(x_1))_{\#}\gamma,
\]
where the densities $\rho_i$ are obtained from the Lebesgue decomposition \eqref{L-decomp}.
\end{prop}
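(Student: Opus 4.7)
The plan is to treat the two claims separately. For existence, I would apply the direct method of the calculus of variations on the constrained set $\overline{S}^p_=(\mu_0,\mu_1) \subset \Pc(Y\times Y)$. Since $Y[s_*] = X \times [0,s_*]$ is compact (using that $X$ is compact under the standing assumption), every admissible $\alpha$ is a probability measure supported on the compact square $(Y\times Y)[s_*]$, so any minimising sequence $\{\alpha_n\}$ is automatically tight. Prokhorov's theorem then yields a narrowly convergent subsequence $\alpha_n \rightharpoonup \overline\alpha$, whose limit is again a probability measure supported on that compact set, ensuring $\overline\alpha((Y\times Y)[s_*]') = 0$. The homogeneous marginal constraint $\h^p_i \overline\alpha = \mu_i$ passes to the limit by testing against $\phi \in C_b(X)$, since $(x_i,s_i) \mapsto s_i^p \phi(x_i)$ is bounded and continuous on the compact support. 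Lower semicontinuity of $\alpha \mapsto (H_p,\alpha)$ under narrow convergence follows from the lower semicontinuity of $H_p$ inherited from $H$ (via \cite[Lemma~5.3]{LMS18}), yielding $(H_p,\overline\alpha) \leq \liminf_n (H_p,\alpha_n) = \mathrm{UOT}(\mu_0,\mu_1)$, so $\overline\alpha$ is indeed a minimiser.

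For the explicit construction from a primal minimiser $\gamma$, I would first define the lift $\alpha := (x_0,\rho_0^{1/p}(x_0),x_1,\rho_1^{1/p}(x_1))_\#\gamma$. A routine change-of-variables computation gives $\h^p_i \alpha = \rho_i\gamma_i$, and unfolding the pushforward shows $(H_p,\alpha) = \int_{X\times X} H(x_0,\rho_0(x_0),x_1,\rho_1(x_1))\,d\gamma$, which is exactly the integral part of the homogeneous functional $\H(\mu_0,\mu_1\mid\gamma)$ from \eqref{H}. Applying Proposition~\ref{prop:rescale} then produces $\overline\alpha := ({\rm prd}_\theta)_\#(\theta^p \tilde\alpha) \in \overline{S}^p_=(\mu_0,\mu_1)$ with $(H_p,\overline\alpha) = (H_p,\alpha)$ preserved. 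Combining with the homogeneous reformulation $\mathrm{UOT}(\mu_0,\mu_1) = \H(\mu_0,\mu_1\mid\gamma)$ gives $(H_p,\overline\alpha) = \mathrm{UOT}(\mu_0,\mu_1)$, confirming that $\overline\alpha$ achieves the infimum in \eqref{UOT-rescaled} — at least when $\mu_i^\perp = 0$.

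The main obstacle lies in absorbing the singular parts $\mu_0^\perp, \mu_1^\perp$ into the equality-constrained class $\overline{S}^p_=$, because the naive lift above only delivers $\h^p_i \alpha = \rho_i\gamma_i \leq \mu_i$, and correspondingly $(H_p,\alpha)$ differs from $\H(\mu_0,\mu_1\mid\gamma)$ by the residual penalty $F(0)\sum_i(\mu_i - \rho_i\gamma_i)(X)$. The natural fix is to augment $\alpha$ with boundary components supported on $\{s_0=0\}$ and $\{s_1=0\}$ carrying the missing mass, chosen so that evaluating $H_p$ against them exactly reproduces this penalty (consistent, in the KL case, with the identity $H(x_0,0,x_1,s_1) = s_1 = F(0)\, s_1$). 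However, the rescaling ${\rm prd}_\theta$ from Proposition~\ref{prop:rescale} is only defined away from $\{s_0 = s_1 = 0\}$, so combining the boundary augmentation with the rescaling step requires delicate bookkeeping on the radial boundary of $Y\times Y$ — precisely the gap the authors flag in the introduction concerning sets where radial components are null.
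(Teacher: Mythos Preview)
The paper does not provide its own proof of this proposition: it is quoted verbatim from \cite[Remark~5.10, Theorem~5.8]{LMS18} as part of the background exposition in Section~\ref{sec:UOT}, so there is no in-paper argument to compare against. Your outline is essentially the standard route taken in \cite{LMS18}: a direct-method argument for existence on the compact slice $(Y\times Y)[s_*]$, followed by the lift $\alpha=(x_0,\rho_0^{1/p},x_1,\rho_1^{1/p})_\#\gamma$ and the rescaling of Proposition~\ref{prop:rescale}.

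Your diagnosis of the obstacle is also accurate. The bare lift only yields $\h_i^p\alpha=\rho_i\gamma_i\leq\mu_i$, so the assertion ``$\alpha\in S^p_=(\mu_0,\mu_1)$'' in the statement is literally correct only when $\mu_i^\perp=0$; otherwise one must adjoin pieces supported on $\{s_{1-i}=0\}$ carrying $\mu_i^\perp$, which in the KL case contributes exactly the $F(0)\sum_i(\mu_i-\rho_i\gamma_i)(X)$ term via $H(x_0,s_0,x_1,0)=s_0$. This augmentation is precisely what \cite[Theorem~5.8]{LMS18} does, and your remark that it interacts delicately with the restriction to $\{s_0=s_1=0\}^c$ in Proposition~\ref{prop:rescale} matches the authors' own caveat in the introduction about null radial components. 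One small addition worth making explicit in your existence step: non-emptiness of $\overline S^p_=(\mu_0,\mu_1)$ should be noted (e.g.\ rescale the product lift $(x,1)_\#\mu_0\otimes(x,1)_\#\mu_1\in S^p_=(\mu_0,\mu_1)$), since the direct method needs a starting minimising sequence.
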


This concludes the brief exposition of all the relevant results for the unregularised unbalanced optimal transport problems. 

\subsubsection{Example: Lifting of the balanced optimal transport}\label{sec:OT}

We finish this section by discussing the special case of the balanced optimal transport problem, emphasising that even there the idea of lifting to higher dimensional spaces is not redundant. In particular, this example will set the scene for the idea of subsequent liftings, to be explored in Section~\ref{sec:eUOT-unify}.

The unregularised unbalanced optimal transport problem introduced Section~\ref{sec:UOT-P} reduces to balanced optimal transport in the case when the entropy function $F$ from \eqref{F-func} is set to be
\[
\overline F(s) := \begin{cases}
    0,\quad &\text{when } s=1, \\ +\infty,\quad &\text{otherwise.}
\end{cases}
\]
To be precise, we consider, for $\mu_0,\mu_1 \in \M(X \times X)$,
\begin{equation*}
{\rm OT}(\mu_0,\mu_1) := \inf_{\gamma \in \mathcal{M}(X\times X)} \overline\E(\gamma \mid \mu_0,\mu_1),
\end{equation*}
where  
\begin{equation*}
\overline\E(\gamma \mid \mu_0, \mu_1) := \sum_{i=0,1} \overline{\mathcal{F}}(\gamma_i \mid \mu_i) + (c,\gamma), \quad \overline{\mathcal{F}}(\gamma_i \mid \mu_i) = \int_X \overline F(\sigma_i)d\mu_i + \overline F'_{\infty}\gamma_i^{\perp}(X).
\end{equation*}
In particular we note that if $\mu_0(X) \neq \mu_1(X)$, then ${\rm OT}(\mu_0,\mu_1) = \infty$. Nonetheless, all the results presented so far in Section~\ref{sec:UOT} apply and hence we observe that 
\begin{equation}\label{eqn:OT_H}
    {\rm OT}(\mu_0,\mu_1) = \inf_{\gamma \in \M(X \times X)} \int_{X \times X}H(x_0,\rho_0(x_0),x_1,\rho_1(x_1))d\gamma
\end{equation}
where, due to entropy functions $\overline F$ for marginals, we have 
\[
H(x_0,s_0,x_1,s_1) = \begin{cases}
    ac(x_0,x_1),\quad&\text{when } s_0 = s_1 = a,\\
    +\infty,\quad&\text{otherwise.}
\end{cases}
\]
In particular, we note that a necessary condition on the marginals of $\gamma$ for the integral in \eqref{eqn:OT_H} to be finite is that $a \gamma_i = \mu_i$ for some constant $a > 0$ and we can split the infimisation into two steps and observe that
\[
    {\rm OT}(\mu_0,\mu_1) = \inf_{\gamma \in \Gamma(\mu_0,\mu_1)} \Big\{\inf_{a> 0} \int_{X \times X}\frac{1}{a}c(x_0,x_1)d (a\gamma) \Big\} = \inf_{\gamma \in \Gamma(\mu_0,\mu_1)} \int_{X \times X} c(x_0,x_1)d\gamma,
\]
thus just reducing to the usual formulation. 

Once we lift to the extended space, such a full reduction does not hold. We have
\begin{equation}\label{eqn:OT-H-ext}
{\rm OT}(\mu_0,\mu_1) = \inf_{\alpha \in S^p_=(\mu_0,\mu_1)} \int_{Y \times Y}H(x_0,s_0,x_1,s_1)d\alpha.
\end{equation}
Clearly a necessary condition on the support of $\alpha$ for the integral to be finite is that 
\[
\alpha((Y\times Y)\setminus \Omega) = 0, \quad \text{where } \Omega = \{(y_0,y_1) \in Y \times Y \mid s_0 = s_1\}.
\]
In other words, $\alpha$ has to be of the form
\[
\alpha = (x_0,s,x_1,s)_{\#}\beta, \quad \beta \in \M(X^2 \times \R_+),\; (x_0,x_1,s) \in X^2\times \R_+.
\]
Since $\alpha \in S^p_{=}(\mu_0,\mu_1)$, the corresponding $\beta$ has to belong to 
\[
\widetilde S^p_{=}(\mu_0,\mu_1) := \left\{ \beta \in \M(X^2\times \R_+)\, \Big|\, \pi^{x_i}_{\#}(s^p\beta) = \mu_i \right\},
\]
where, for any measurable function $f\,\colon\,X \to \R$,
\[
\int_X f(x) d\pi^{x_i}_{\#}(s^p\beta) = \int_{X \times X \times \R_+} s^p f(x_i)d\beta.
\]
It readily follows that 
\[
{\rm OT}(\mu_0,\mu_1) = \inf_{\beta \in \widetilde S^p_=(\mu_0,\mu_1)}\int_{X \times X \times \R_+} s\, c(x_0,x_1)d\beta.
\]

We thus see that in the balanced case, the lifting strategy takes us from a minimisation problem posed on $\M(X \times X)$ to a minimisation problem posed on \linebreak ${\M(X \times X \times \R_+)}$.

\section{Entropic regularisation of unbalanced optimal transportation}\label{sec:eUOT}
In this main section of our paper, we study and obtain theoretical results about the different ways in which the unbalanced optimal transport problem can be entropy-regularised.
\subsection{Regularisation on the original space}\label{sec:eUOT_X}
The standard and seemingly most natural way to introduce entropic regularisation of the UOT problem \eqref{UOT-P} is to consider, for some appropriate reference measure $\nu_X \in \M(X \times X)$ (the subscript is kept to distinguish $\nu_X$ from $\nu_Y$ to be introduced in Section~\ref{sec:eUOT_Y}) and some small $\e >0$,
\begin{equation}\label{eUOT-P}
{\rm UOT}_{X,\e}(\mu_0,\mu_1) := \inf_{\gamma \in \mathcal{M}(X\times X)}\E_\e(\gamma \mid \mu_0,\mu_1, \nu_X),
\end{equation}
where, recalling the definition of the primal UOT functional $\E$ in \eqref{P}, its regularised counterpart $\E_\e$ is defined by
\begin{equation}\label{eP}
\E_\e(\gamma \mid \mu_0, \mu_1, \nu_X) := \E(\gamma \mid \mu_0, \mu_1)+\e \F(\gamma \mid \nu_X).
\end{equation}
Here, similarly to \eqref{F-func}, we have
\begin{equation*}
    \F(\gamma\mid \nu_X) := \int_{X\times X} F(\varsigma)d\nu_X + F_{\infty}' \gamma^{\perp}(X \times X),
\end{equation*}
with $F$, both here and in the definition of $\F(\gamma_i \mid \mu_i)$, taken to be the KL divergence \eqref{KL} and, as already mentioned, $\nu_X \in \M(X\times X)$ is some a-priori fixed reference measure, with the density $\varsigma$, similarly to \eqref{L-decomp}, obtained via Lebesgue decomposition
\begin{equation}\label{L-decomp-full}
\gamma = \varsigma \nu_X + \gamma^{\perp}, \quad \nu_X = \varrho \gamma + \nu_X^{\perp}.
\end{equation}

There is by now a substantial body of literature devoted to studying ${\rm UOT}_{X,\e}$ and closely related problems \cite{CPSV18,SFVTP19,SPV22,L22}, on which we now make several remarks.

\begin{remark}
For future reference, we explicitly note that in \cite{CPSV18} the authors require the reference measure $\nu_X$ to be a probability measure and consider
\[
\widetilde{\rm UOT}_{X,\e}(\mu_0,\mu_1):= \inf_{\gamma \in \M(X \times X)} \left\{ \sum_i \F(\gamma_i \mid \mu_i) + (c,\gamma) + \e\tilde\F(\gamma \mid \nu_X)\right\}
\]
where
\[
\tilde \F(\gamma \mid \nu_X) := \int_{X \times X} \tilde F(\sigma) d\nu_X + \tilde{F}'_{\infty}\gamma^{\perp}(X\times X), \quad \tilde F(s) := s\log s -s,
\]
and, owing to the introduction of a new reference measure $\overline \nu_X:= \exp\left(-
\frac{c}{\e}\right)\nu_X$, the other problem considered therein is given by
\[
\overline{\rm UOT}_{X,\e}(\mu_0,\mu_1):= \inf_{\gamma \in \M(X \times X)} \left \{ \sum_i \F(\gamma_i \mid \mu_i) + \e\F(\gamma \mid \overline \nu_X) \right\}.
\]
It can be readily established that 
\begin{align*}
{\rm UOT}_{X, \e}(\mu_0,\mu_1) &= \widetilde{\rm UOT}_{X,\e}(\mu_0,\mu_1) + \e \nu_X(X \times X) \\
&= \overline{\rm UOT}_{X,\e}(\mu_0,\mu_1) - \e \int_{X\times X} \exp\left(\frac{-c}{\e}\right) d\nu_X +  \e \nu_X(X \times X). 
\end{align*}
In a more recent effort \cite{SFVTP19} the authors consider ${\rm UOT}_{X,\e}(\mu_0,\mu_1)$ with reference measure $\nu_X$ chosen to be the product measure $\nu_X = \mu_0 \otimes \mu_1$. This has an obvious limitation of introducing unnecessary mass discrepancy since $\nu_X(X \times X) = \mu_0(X) \mu_1(X)$ and 
\[
\pi^i_{\#}\nu_X = \mu_j(X) \mu_i \neq \mu_i,
\]
unless $\mu_j(X) = 1$.
\end{remark}
\begin{remark}
In the recent interesting effort aimed at deriving a regularisation of unbalanced transport problems that preserves homogeneity \cite{L22}, the author considers 
\[
{\rm UOT}_{X,\e}^{(H)}(\mu_0,\mu_1):= \inf_{\gamma \in \M(X \times X)} \left\{ \sum_i \F(\gamma_i \mid \mu_i) + (c,\gamma) + \e \G(\gamma \mid \mu_0, \mu_1)\right\},
\]
where
\[
\G(\gamma \mid \mu_0, \mu_1) := \frac12\left(\F(\gamma \mid \hat \mu_0 \otimes \mu_1) + \F(\gamma \mid \mu_0 \otimes \hat \mu_1)\right), \quad \hat \mu_i := \frac{\mu_i}{\mu_i(X)} \in \Pc(X).
\]
It follows from careful rewriting that, using the Lebesgue decomposition in \eqref{L-decomp-full} with $\nu_X = \mu_0 \otimes \mu_1$, the functional $\G$ can be rewritten as
\[
\G(\gamma \mid \mu_0, \mu_1) = \int_{X\times X}G(\sigma) d (\mu_0 \otimes \mu_1) + G_{\infty}' \gamma^{\perp}(X\times X), 
\]
where
\[
G(s) := s \log s + (c_0 - 1)s + \frac1{c_1}, \quad c_0 = \frac12\log(\mu_0(X)\mu_1(X)),\,c_1 = 2 \frac{\mu_0(X)\mu_1(X)}{\mu_0(X)+\mu_1(X)}.
\]
The form of $c_0$ and $c_1$ appears somewhat reminiscent of the rescalling invariance to be discussed in Section~\ref{sec:eUOT-H-rescale}
\end{remark}

To the best of our knowledge there have so far been no results concerning the existence of minimisers of the problem \eqref{eUOT-P}, which we will now establish by arguing as in \cite[Theorem 3.3, Theorem 6.2]{LMS18}.
\begin{theorem}[Existence of $X$-space-regularised minimisers]
Under natural assumptions on the entropy functional $\F$, the cost function $c$, and the reference measure $\nu_X$, there exists at least one $\gamma_\e \in \M(X\times X)$ minimizing the right-hand side of \eqref{UOT-P}.
\end{theorem}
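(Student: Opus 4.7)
The plan is to argue by the direct method of the calculus of variations, mirroring the strategy of \cite[Theorems 3.3 and 6.2]{LMS18} but with the extra entropy term $\e\F(\gamma\mid\nu_X)$ included. Concretely, I would first verify that the infimum in \eqref{eUOT-P} is finite by testing with the trivial admissible choice $\gamma=0$: since $F(0)=1$ and $F'_\infty=+\infty$ for the KL divergence, $\F(0\mid\mu_i)=\mu_i(X)$, $(c,0)=0$, and $\F(0\mid\nu_X)=\nu_X(X\times X)$, so $\E_\e(0\mid\mu_0,\mu_1,\nu_X)<\infty$ provided $\mu_i,\nu_X$ have finite mass.

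Next I would extract a minimising sequence $\{\gamma_n\}\subset\M(X\times X)$ with $\E_\e(\gamma_n\mid\mu_0,\mu_1,\nu_X)\leq M$. The three nonnegative terms on the right-hand side are all uniformly controlled by $M$. In particular $\F(\gamma_n\mid\nu_X)\leq M/\e$, and superlinearity of $F$ together with the variational characterisation of relative entropy (Donsker--Varadhan/de la Vall\'ee Poussin) yields a uniform bound on the total mass $\gamma_n(X\times X)$ and equi-integrability of the densities $\varsigma_n$ with respect to $\nu_X$. Since we are under the standing assumption that $X\subset\R^n$ is compact (so $X\times X$ is compact), tightness is automatic and Prokhorov's theorem supplies a narrowly convergent subsequence $\gamma_{n_k}\rightharpoonup\gamma_\e$. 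In the non-compact setting one would instead combine tightness of $\nu_X$ with the absolute continuity forced by $F'_\infty=+\infty$ to transfer tightness to $\{\gamma_n\}$.

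The final step is lower semicontinuity of each term under narrow convergence. The cost term $\gamma\mapsto (c,\gamma)$ is narrowly lsc because $c$ is lsc and bounded below on compacta. The marginal entropies $\gamma\mapsto\F(\gamma_i\mid\mu_i)$ are narrowly lsc, using the continuity of the push-forward $\gamma\mapsto\gamma_i$ in the narrow topology together with the classical joint narrow lsc of the relative entropy functional on $\M(X)\times\M(X)$ (see, e.g., \cite[Cor.~2.9]{LMS18}). The same joint narrow lsc result applied on $X\times X$ handles the regularising term $\F(\gamma\mid\nu_X)$. Summing these three lsc contributions gives
\begin{equation*}
\E_\e(\gamma_\e\mid\mu_0,\mu_1,\nu_X)\leq\liminf_{k\to\infty}\E_\e(\gamma_{n_k}\mid\mu_0,\mu_1,\nu_X)=\inf_{\gamma\in\M(X\times X)}\E_\e(\gamma\mid\mu_0,\mu_1,\nu_X),
\end{equation*}
so $\gamma_\e$ is the desired minimiser.

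The only genuinely delicate point I foresee is the coercivity step in the non-compact case: with only the marginal penalty $\F(\gamma_i\mid\mu_i)$ one already controls the mass of the marginals (hence of $\gamma$), but to deduce tightness on $X\times X$ one typically uses that $\mu_i$ is tight and $(\gamma_n)_i\ll\mu_i$ with equi-integrable density. The additional term $\e\F(\gamma\mid\nu_X)$ actually simplifies this, because it directly forces $\gamma_n\ll\nu_X$ with equi-integrable density and $\nu_X$ is assumed tight. Everything else is a routine assembly of standard facts about narrow convergence and relative entropy.
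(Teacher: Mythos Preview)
Your proposal is sound and follows exactly the route the paper announces: the paper states that existence ``will now [be] establish[ed] by arguing as in \cite[Theorem 3.3, Theorem 6.2]{LMS18}'' but then leaves the proof as a placeholder to be filled in a later version. Your direct-method argument (finiteness via the test plan $\gamma=0$, tightness from compactness of $X$ or from $\gamma_n\ll\nu_X$ with equi-integrable densities in the non-compact case, and narrow lower semicontinuity of each of the three terms) is precisely the intended strategy and contains no gaps under the standing assumption that $X$ is compact.
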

\begin{proof}
{\cb \footnotesize [Statement of the theorem to be made precise and the proof to be added in the next version of the manuscript.]}
\end{proof}
Mimicking the approach outlined in Section~\ref{sec:UOT}, we will now present equivalent formulations of ${\rm UOT}_{X,\e}$, including the new homogeneous and extended-space formulations, which leverage the interplay between the (obvious) reverse formulation and the (already known \cite{CPSV18}) dual formulation. To the best of our knowledge, this is the first attempt at doing so.

\subsubsection{Reverse formulation}\label{sec:eUOT-R}
Recalling the definition of the reverse UOT functional $\Rc$ in \eqref{R}, we introduce its regularised counterpart as
\begin{align}\label{eR}
\Rc_\e(\mu_0, \mu_1, \nu_X \mid \gamma) &:= \Rc(\mu_0, \mu_1 \mid \gamma)+ \e\int_{X \times X} R(\varrho(x_0,x_1)) d\gamma(x_0,x_1) \\ &+ \e R'_{\infty}\left(\nu_X - \varrho\gamma\right)(X\times X),\nonumber
\end{align}
The following proposition is an immediate extension of the  discussion presented in Section~\ref{sec:UOT-R}
\begin{prop}\label{prop:eUOT-R}
The regularised unbalanced optimal transport problem ${\rm UOT}_{X,\e}$, defined in \eqref{eUOT-P}, can be equivalently stated as
\begin{equation*}
    {\rm UOT}_{X,\e}(\mu_0,\mu_1) = \inf_{\gamma \in \mathcal{M}(X\times X)} \Rc_\e(\mu_0,\mu_1, \nu_X \mid \gamma).
\end{equation*}
\end{prop}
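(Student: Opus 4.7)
The plan is to observe that Proposition~\ref{prop:eUOT-R} follows by applying the same reverse-entropy identity that underlies Theorem 3.11 of \cite{LMS18}, but applied twice: once to the two marginal terms $\F(\gamma_i \mid \mu_i)$ as in the unregularised case, and once more to the additional regularisation term $\F(\gamma \mid \nu_X)$, which is structurally of the same type but posed on the product space $X \times X$ with $\gamma$ and $\nu_X$ playing the roles of the two measures.

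The key ingredient I would invoke is the general identity (see \cite[Lemma~2.9]{LMS18} or the argument leading to their Theorem~3.11) asserting that, for any convex entropy $F$ with reverse $R$ defined by \eqref{R-ent}, and any two measures $\mu,\nu$ on a Polish space $Z$, one has
\[
\int_Z F(\sigma)\,d\nu + F'_{\infty}\mu^{\perp}(Z) = \int_Z R(\rho)\,d\mu + R'_{\infty}\nu^{\perp}(Z),
\]
with the Lebesgue decompositions $\mu = \sigma\nu + \mu^{\perp}$ and $\nu = \rho\mu + \nu^{\perp}$. Applied with $(\mu,\nu,Z) = (\gamma_i,\mu_i,X)$ this rewrites each $\F(\gamma_i \mid \mu_i)$ in the reverse form used in \eqref{R}, and applied with $(\mu,\nu,Z) = (\gamma,\nu_X,X\times X)$ together with the Lebesgue decomposition \eqref{L-decomp-full} it rewrites
\[
\F(\gamma \mid \nu_X) = \int_{X \times X} R(\varrho(x_0,x_1))\,d\gamma(x_0,x_1) + R'_{\infty}(\nu_X - \varrho\gamma)(X\times X),
\]
which is exactly the additional piece appearing in \eqref{eR}. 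Summing the three identities and adding the linear cost $(c,\gamma)$, which is untouched by the transformation, yields $\E_\e(\gamma \mid \mu_0,\mu_1,\nu_X) = \Rc_\e(\mu_0,\mu_1,\nu_X \mid \gamma)$ pointwise in $\gamma \in \M(X \times X)$, so the two infima coincide.

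There is essentially no hard step: the identity is already in \cite{LMS18} and the only thing to check is that it is legitimately applicable to the regularisation term, i.e.\ that the measure-theoretic setup on $X \times X$ (Polish, Radon, $\sigma$-finite) inherits the hypotheses needed for \cite[Lemma~2.9]{LMS18}, which is automatic. One mild subtlety worth flagging in the write-up is the KL case \eqref{KL}, where $F'_{\infty} = +\infty$ forces $\gamma \ll \nu_X$ so that $\gamma^{\perp} = 0$ and the $F'_{\infty}\gamma^{\perp}$ term in $\E_\e$ is interpreted as $0\cdot\infty = 0$; correspondingly on the reverse side one has $R'_{\infty} = F(0) = 1$ finite, so the term $\e R'_{\infty}(\nu_X - \varrho\gamma)(X\times X)$ is well-defined. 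Once these conventions are fixed, the equality of functionals, and hence of the infima, is immediate.
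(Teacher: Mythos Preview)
Your proposal is correct and matches the paper's approach exactly: the paper simply states that the proposition ``is an immediate extension of the discussion presented in Section~\ref{sec:UOT-R}'' without giving any further proof, and your write-up fills in precisely those details by applying the reverse-entropy identity from \cite{LMS18} to the marginal terms and, additionally, to the regularisation term $\F(\gamma \mid \nu_X)$ on $X\times X$. There is nothing to add or correct.
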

\subsubsection{Dual formulation}\label{sec:eUOT-D}
Recalling the discussion in Section~\ref{sec:UOT-D}, an application of Theorem~\ref{thm:FRT}, as already discussed in \cite{CPSV18}, yields the following result. 
\begin{prop}\label{prop:eUOT-D}
The regularised unbalanced optimal transport problem ${\rm UOT}_{X,\e}$, defined in \eqref{eUOT-P}, can equivalently stated as
\begin{equation*}
    {\rm UOT}_{X,\e}(\mu_0,\mu_1) = \sup_{\phi \in \tilde\Phi} \D_{\phi,\e}(\phi \mid \mu_0,\mu_1,\nu_X),
\end{equation*}
where
\begin{equation}\label{eD}
\D_{\phi,\e}(\phi \mid \mu_0,\mu_1,\nu_X) := \sum_{i} \mu_i(-F^*(-\phi_i)) + \e\nu_X\left(-F^*\left(\frac{\phi_0 \oplus \phi_1 - c}{\e}\right)\right)
\end{equation}
and
\begin{equation}\label{tildePhi}
    \tilde\Phi := \{\phi = (\phi_0,\phi_1) \in C_b(X) \times C_b(X)\;\mid\; -F^*(-\phi_i) \in C_b(X)\}
\end{equation}
\end{prop}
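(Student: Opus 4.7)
The plan is to apply the Fenchel--Rockafellar Theorem~\ref{thm:FRT} exactly in the spirit of the unregularised derivation in Section~\ref{sec:UOT-D}, absorbing the new penalty $\e\F(\gamma\mid\nu_X)$ into the functional $g^*$. Concretely, I keep the pairings $E=C_b(X)\times C_b(X)$ with $E^*=\M(X)\times\M(X)$ and $F=C_b(X\times X)$ with $F^*=\M(X\times X)$, the continuous linear operator $A(\phi_0,\phi_1)=\phi_0\oplus\phi_1$ whose adjoint is the marginalisation $A^*\gamma=(\gamma_0,\gamma_1)$, and the ``marginal'' part $f^*(\gamma_0,\gamma_1):=\sum_i\F(\gamma_i\mid\mu_i)$. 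The only modification is to now take
\[
g^*(\gamma):=(c,\gamma)+\e\F(\gamma\mid\nu_X),
\]
so that $f^*(A^*\gamma)+g^*(\gamma)=\E_\e(\gamma\mid\mu_0,\mu_1,\nu_X)$ and the minimisation side of Theorem~\ref{thm:FRT} reproduces \eqref{eUOT-P}.

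The main computation is to identify $g$. Using \cite[Proposition~A.3]{CPSV18} with $F$ equal to KL (so $F'_\infty=+\infty$ and no dual constraint appears), the Legendre dual of $\e\F(\cdot\mid\nu_X)$ at $\psi\in C_b(X\times X)$ is $\e\int F^*(\psi/\e)\,d\nu_X$; combining with the linear term $(c,\gamma)$ by the standard translation property yields
\[
g(\psi)=\e\int_{X\times X}F^*\!\left(\frac{\psi-c}{\e}\right)d\nu_X,
\]
where wherever $c=+\infty$ the integrand is interpreted via $F^*(-\infty)=-1$, the finite lower bound of the KL conjugate. Substituting $\psi=\phi_0\oplus\phi_1$ and applying Theorem~\ref{thm:FRT} gives $-f(-\phi)-g(A\phi)=\sum_i\mu_i(-F^*(-\phi_i))+\e\nu_X(-F^*((\phi_0\oplus\phi_1-c)/\e))$, which is precisely $\D_{\phi,\e}$ in \eqref{eD}. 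A key structural observation to highlight is that, because $F^*$ is finite on all of $\R$, the functional $g$ is everywhere finite; this is exactly what removes the hard constraint $\phi_0\oplus\phi_1\leq c$ that was present in $\Phi$ and leaves only the regularity requirements defining $\tilde\Phi$ in \eqref{tildePhi}.

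It remains to verify the qualification hypothesis of Theorem~\ref{thm:FRT}: one needs $x\in E$ with $f(x)<\infty$ and $g$ continuous at $Ax$. A large constant choice $\phi_0\equiv\phi_1\equiv M$ suffices for the former, since $-F^*(-M)=1-e^{-M}\in C_b(X)$. For the continuity of $g$ at $\phi_0\oplus\phi_1\equiv 2M$, one uses dominated convergence on $(\phi,\nu_X)$ once the integrand is bounded uniformly in a $C_b$-neighbourhood of $2M$; this is where a little care is needed on the set $\{c=+\infty\}$, but there the integrand is pinned at $F^*(-\infty)=-1$ and the remaining set is controlled since $F^*$ is locally Lipschitz and $\nu_X$ is a finite Radon measure. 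The main technical obstacle I anticipate is formalising this continuity uniformly, especially if one wishes to treat $c$ merely lower semicontinuous rather than continuous; once this is in place, Theorem~\ref{thm:FRT} delivers the stated equality.
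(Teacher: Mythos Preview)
Your proposal is correct and follows exactly the route the paper indicates: the paper's entire argument for Proposition~\ref{prop:eUOT-D} is the one-line remark that it follows from Theorem~\ref{thm:FRT} ``as already discussed in \cite{CPSV18}'', and your choice of $g^*(\gamma)=(c,\gamma)+\e\F(\gamma\mid\nu_X)$ with $A(\phi_0,\phi_1)=\phi_0\oplus\phi_1$ is precisely that application. Your explicit identification of why the hard constraint $\phi_0\oplus\phi_1\leq c$ disappears (finiteness of $F^*$ on all of $\R$ for KL) and your discussion of the qualification condition in fact go beyond what the paper spells out; the technical caveat you flag about continuity of $g$ when $c$ is merely lower semicontinuous is real but is exactly the sort of detail the paper, in its current preliminary form, consciously leaves aside.
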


\subsubsection{Homogenous formulation}\label{sec:eUOT-H}
Mimicking the approach outlined in Section~\ref{sec:UOT-H}, we now introduce the \emph{regularised induced marginal perspective cost function}
\[
H_\e \,\colon\, (X \times \R_+) \times (X \times \R_+) \times \R_+ \to [0,+\infty],
\]
as
\begin{equation}\label{He-def}
H_\e(x_0,s_0,x_1,s_1,S):= \inf_{t >0} \Bigg\{t\Big(R\left(\frac{s_0}{t}\right) + R\left(\frac{s_1}{t}\right) + \e R\left(\frac{S}{t}\right) + c(x_0,x_1)\Big)\Bigg\}.
\end{equation}
We prove the following (c.f. \eqref{H-KL} for the unregularised case). 
\begin{lemma}\label{lem:eH-explicit}
If $R$ is given by \eqref{R-KL}, then the regularised induced marginal perspective cost function introduced in \eqref{He-def} admits a representation as
\[
H_\e(x_0,s_0,x_1,s_1,S) = s_0 + s_1 + \e S - (2+\e)\left((s_0s_1)^{\frac{1}{2+\e}} S^{\frac{\e}{2+\e}}\exp\left(\frac{-c(x_0,x_1)}{2+\e}\right)\right).
\]
\end{lemma}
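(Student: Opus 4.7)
The proof will be a direct explicit computation, exploiting the fact that the KL reverse entropy $R(s) = s - \log s - 1$ turns the infimum in \eqref{He-def} into a one-dimensional convex minimisation with a closed-form solution. My plan is the following.

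First, I would substitute $R(s) = s - \log s - 1$ into the bracketed expression and expand. Using the identity
\[
tR(s/t) = s - t\log s + t\log t - t,
\]
valid for any $s,t > 0$, the sum $tR(s_0/t) + tR(s_1/t) + \e tR(S/t) + tc(x_0,x_1)$ simplifies to
\[
s_0 + s_1 + \e S + (2+\e)\,t\log t - t\,K(s_0,s_1,S,x_0,x_1),
\]
where $K := \log s_0 + \log s_1 + \e \log S + (2+\e) - c(x_0,x_1)$. This is the function of $t$ we need to minimise, and the key observation is that only the last two terms depend on $t$.

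Second, I would solve the first-order condition. Differentiating in $t$ gives $(2+\e)(\log t + 1) - K = 0$, i.e.
\[
t^{\ast} = \exp\!\left(\frac{K - (2+\e)}{2+\e}\right) = (s_0 s_1 S^{\e})^{\frac{1}{2+\e}}\exp\!\left(\frac{-c(x_0,x_1)}{2+\e}\right).
\]
The second derivative is $(2+\e)/t > 0$, so the function is strictly convex on $(0,\infty)$ and $t^{\ast}$ is the unique global minimiser. Plugging $t^{\ast}$ back in, the identity $(2+\e)t^{\ast}\log t^{\ast} = t^{\ast}(K - (2+\e))$ causes the terms involving $K$ to cancel, leaving
\[
H_{\e} = s_0 + s_1 + \e S - (2+\e)\, t^{\ast},
\]
which is exactly the claimed formula after splitting $(s_0 s_1 S^{\e})^{1/(2+\e)}$ as $(s_0 s_1)^{1/(2+\e)} S^{\e/(2+\e)}$.

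I do not expect any real obstacle here, since the computation is routine once the logarithm and power algebra is correctly tracked; the only delicate points concern the boundary behaviour. When one (or more) of $s_0, s_1, S$ vanishes, the convention $R(0) = F_{\infty}' = +\infty$ from \eqref{R-ent} forces the infimum to be $+\infty$ unless one reinterprets $H_{\e}$ by lower-semicontinuous extension. The explicit formula on the right-hand side, however, remains finite in those degenerate cases, so strictly speaking the lemma as stated is the assertion on the open set $\{s_0, s_1, S > 0\}$. This is precisely the kind of null-radial-component issue flagged in the authors' note at the end of Section~1, and I would mention it explicitly but defer a full extension-and-closure argument, as that is the programme the authors indicate will be carried out in the next version.
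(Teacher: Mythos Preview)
Your proposal is correct and is exactly the approach the paper takes: its proof consists of the single sentence ``It follows from a direct calculation by differentiating the right-hand side of \eqref{He-def},'' and you have carried out precisely that calculation in full. Your observation about the boundary behaviour when some of $s_0,s_1,S$ vanish is a worthwhile caveat that the paper itself does not spell out here.
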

\begin{proof}
It follows from a direct calculation by differentiating the right-hand side of \eqref{He-def}.
\end{proof}

Another technical ingredient we prove is the following. 
\begin{lemma}\label{lem:eH-dual}
The regularised induced marginal perspective cost function introduced in \eqref{He-def} admits a dual representation which, for $c(x_0,x_1)$ fixed, is given by
\begin{align*}
H_{\e}(x_0,s_0,x_1,s_1,S)
= \sup_{(\phi_0,\phi_1) \in \R^2_+} \Bigg\{ &-s_0 F^*(-\phi_0) - s_1F^*(-\phi_1)\\ &- \e S F^*\Big(\frac{\phi_0+\phi_1 - c(x_0,x_1)}{\e}\Big)\Bigg\}.
\end{align*}
\end{lemma}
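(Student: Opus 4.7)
My plan is to mirror the duality computation behind \cite[Lemma~5.3]{LMS18}, treating the extra regularising term $\e tR(S/t)$ as a third perspective contribution on the same footing as the marginal terms $tR(s_i/t)$. The argument proceeds in three stages: (i) Fenchel biconjugation of each perspective term to introduce dual variables; (ii) a minimax exchange that collapses the infimum in $t$ into an affine constraint on the dual variables; (iii) a change of variables from $R^*$ to $F^*$, via the identification \eqref{phi-psi-rel}, to reach the stated formula.

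For stage (i), biconjugacy of $R$ gives $tR(s_i/t) = \sup_{\psi_i}\{s_i\psi_i - tR^*(\psi_i)\}$ and $\e tR(S/t) = \sup_{\xi}\{\e S\xi - \e tR^*(\xi)\}$, so that
\[
H_\e = \inf_{t>0}\sup_{\psi_0,\psi_1,\xi}\Big\{s_0\psi_0+s_1\psi_1+\e S\xi - t\big(R^*(\psi_0)+R^*(\psi_1)+\e R^*(\xi)-c\big)\Big\}.
\]
In stage (ii) the bracketed integrand is affine in $t$ and concave in $(\psi_0,\psi_1,\xi)$, so after a Sion-type exchange of $\inf_t$ and $\sup$ the inner infimum over $t\in(0,\infty)$ equals $0$ when $R^*(\psi_0)+R^*(\psi_1)+\e R^*(\xi)\leq c$ and $-\infty$ otherwise, producing the $R^*$-form
\[
H_\e = \sup\big\{s_0\psi_0+s_1\psi_1+\e S\xi\,\bigm|\,R^*(\psi_0)+R^*(\psi_1)+\e R^*(\xi)\leq c\big\},
\]
which is the direct regularised analogue of the representation derived in \cite[Lemma~5.3]{LMS18}. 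For stage (iii) I would substitute $\phi_i = R^*(\psi_i)$ (so that $\psi_i = -F^*(-\phi_i)$ by \eqref{phi-psi-rel}) and $\zeta = R^*(\xi)$; the constraint becomes $\phi_0+\phi_1+\e\zeta\leq c$, and because $F^*$ is non-decreasing on its effective domain and $S\geq 0$, the supremum over $\zeta$ is attained at the upper endpoint $\zeta=(c-\phi_0-\phi_1)/\e$. This gives $\xi = -F^*((\phi_0+\phi_1-c)/\e)$ and the claimed identity.

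The main obstacle is the minimax exchange in stage (ii): both $t\in(0,\infty)$ and the dual variables range over unbounded sets, so a clean invocation of Sion's theorem requires a preliminary coercivity step, for instance truncating to sublevel sets of $R^*$ (legitimate under the superlinear growth assumptions on $F$ and hence on $R^*$) and then passing to the limit in the truncation parameter. A fallback strategy, if this is delicate, is to prove the two inequalities separately: ``$\geq$'' is immediate weak duality, while ``$\leq$'' can be obtained by writing down the stationary $t$ from the first-order conditions for the perspective problem and exhibiting a dual triple that saturates it, which is fully explicit in the KL case via Lemma~\ref{lem:eH-explicit}. A secondary subtlety is that the computation above gives the supremum over all $(\phi_0,\phi_1)$ in the effective domain of $F^*(-\,\cdot)$, whereas the lemma restricts to $\R^2_+$; some additional reasoning (or a minor correction to the stated domain) will be needed to reconcile the two, since first-order analysis in the KL case suggests the unrestricted maximiser can have a negative coordinate when the ratios $s_i/S$ are sufficiently imbalanced.
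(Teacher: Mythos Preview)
Your approach is correct but takes a genuinely different route from the paper. The paper first rewrites the infimum in terms of $F$ via $tR(s/t)=sF(t/s)$, bundles the regularising term with the cost into a single function $\xi(t)=\e S F(t/S)+tc$, computes its scalar Legendre dual explicitly (using the KL form of $F$), and then applies the Fenchel--Rockafellar theorem (Theorem~\ref{thm:FRT}) with $f^*$ collecting the two marginal perspective terms and $g^*=\xi$. A final rescaling $\phi_i/s_i\mapsto\phi_i$ gives the stated formula.

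By contrast, you stay in the $R$-form throughout, biconjugate each of the three perspective terms separately, and rely on a Sion-type inf--sup exchange to produce the constrained $R^*$-representation before converting via \eqref{phi-psi-rel} and saturating the constraint in the third dual variable. Your route is more symmetric and more visibly parallel to \cite[Lemma~5.3]{LMS18}; the paper's route buys a clean avoidance of the minimax step, since Fenchel--Rockafellar delivers the duality (with attainment) under mild continuity hypotheses without any coercivity or truncation argument. Your fallback of proving the two inequalities separately, with the explicit KL minimiser from Lemma~\ref{lem:eH-explicit} saturating ``$\leq$'', is entirely sound and would also sidestep the issue. Your closing observation about the domain $\R_+^2$ versus the full effective domain of $-F^*(-\cdot)$ is well taken and applies equally to the paper's version of the argument.
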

\begin{proof}
For brevity we suppress the dependence of $H_\e$ and $c$ on its variables. By the definition of the reverse entropy function $R$ in terms of the entropy $F$ given in \eqref{R-ent}, we have 
\[
H_\e = \inf_{t>0} \Bigg\{s_0F\left(\frac{t}{s_0}\right) + s_1F\left(\frac{t}{s_1}\right) + \e SF\left(\frac{t}{S}\right) + tc\Bigg\}.
\]
Set $\xi(t) := \e SF\left(\frac{t}{S}\right) + tc$. The Legendre dual of $\xi$, as in \eqref{Legendre-dual-scalar}, is given by
\[
\xi^*(\phi) := \sup_{t > 0}\Bigg( t\phi - \e SF\left(\frac{t}{S}\right) - tc\Bigg) =: \sup_{t > 0} \tilde\xi(t).
\]
Recalling that $F(s) = s\log s -s + 1$, it is not hard to see that 
\[
\tilde\xi'(\overline t) = 0 \implies \overline t = S\exp\left(\frac{\phi - c}{\e}\right),
\]
which, by plugging this formula for $\overline t$, lets us conclude that 
\[
\xi^*(\phi) = -\e S\left(-\exp\left(\frac{\phi-c}{\e}\right)+1\right).
\]
In preparation to apply Theorem~\ref{thm:FRT}, we identify
\[
y^* \equiv t \in \R_+,\; A^*y^* \equiv \left(\frac{t}{s_0},\frac{t}{s_1}\right) \in \R^2_+,\; f^*(A^*y^*) \equiv s_0 F\left(\frac{t}{s_0}\right) + s_1 F\left(\frac{t}{s_1}\right), \; g^*(y^*) \equiv \xi(t),
\]
which in turn implies that
\[
x \equiv (\phi_0,\phi_1) \in\R_+^2, \quad Ax \equiv \frac{\phi_0}{s_0} + \frac{\phi_1}{s_1}.
\]
We further note that, since $f^*\,\colon\,\R^2_+ \to \R$ is given by $f^*(x^*) = s_0F(x_0^*) + s_1 F(x_1^*)$, we have
\begin{align*}
f(x) &= \sup_{x^*\in\R^2_+} x\cdot x^* - f^*(x^*) = \left(\sup_{x_0^* > 0} x_0^*x_0 - s_0F(x_0^*)\right) + \left(\sup_{x_1^* > 0} x_1^*x_1 - s_1F(x_1^*)\right)\\
&= (s_0 F)^*(x_0) + (s_1 F)^*(x_1) \equiv s_0F^*\left(\frac{\phi_0}{s_0}\right) + s_1F^*\left(\frac{\phi_1}{s_1}\right)
\end{align*}
where the last equality follows from the fact that, in general,
\[
(aF)^*(\phi) = a F^*\left(\frac{\phi}{a}\right),
\]
which implies that 
\[
-f(-x) \equiv -s_0 F^*\left(-\frac{\phi_0}{s_0}\right) - s_1 F^*\left(-\frac{\phi_1}{s_1}\right).
\]
A direct application of Theorem~\ref{thm:FRT} thus implies
\begin{align*}
H_{\e}(x_0,s_0,x_1,s_1,S)
= \sup_{(\phi_0,\phi_1)\in \R^2_+} \Bigg\{ &-s_0 F^*\left(-\frac{\phi_0}{s_0}\right) - s_1F^*\left(-\frac{\phi_1}{s_1}\right)\\ &- \e S F^*\Big(\frac{\frac{\phi_0}{s_0}+\frac{\phi_1}{s_1} - c(x_0,x_1)}{\e}\Big)\Bigg\}.
\end{align*}
The result of the lemma is thus established by recognising that we can safely substitute $\frac{\phi_i}{s_i} \mapsto \phi_i$.
\end{proof}
We are now in a position to introduce the regularised homogeneous UOT functional $\H_\e$ as
\begin{align}\label{eH}
\H_\e(\mu_0,\mu_1, \nu_X \mid \gamma) := &\int_{X \times X} H_\e(x_0,\rho_0(x_0), x_1, \rho_1(x_1),\varrho(x_0,x_1)) d\gamma(x_0,x_1)\\ &+ F(0)\left( \sum_{i=0,1}\left(\mu_i - \rho_i\gamma_i\right)(X) + \left(\nu_X - \varrho\gamma\right)(X \times X) \right)
\end{align}
and, owing to Lemma~\ref{lem:eH-dual}, we will now prove the main result of this section.
\begin{theorem}\label{thm:eUOT-X-H}
    The regularised unbalanced optimal transport problem ${\rm UOT}_{X,\e}$, defined in \eqref{eUOT-P}, can be equivalently stated as
    \begin{equation*}
        {\rm UOT}_{X,\e}(\mu_0,\mu_1) = \inf_{\gamma \in \M(X \times X)} \H_\e(\mu_0,\mu_1, \nu_X \mid \gamma).
    \end{equation*}
\end{theorem}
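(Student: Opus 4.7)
The plan is to sandwich $\inf_\gamma \H_\e(\mu_0,\mu_1,\nu_X\mid\gamma)$ between the reverse formulation of Proposition~\ref{prop:eUOT-R} and the dual formulation of Proposition~\ref{prop:eUOT-D}, exploiting the two characterisations of $H_\e$ at our disposal: the defining infimum \eqref{He-def} on one side and the dual representation of Lemma~\ref{lem:eH-dual} on the other. Structurally this mirrors the unregularised argument in \cite[Theorem~5.5]{LMS18}, with the additional contributions carried by the density $\varrho$ and the singular part $\nu_X^\perp$ handled in parallel to the standard $\rho_i$ and $\mu_i^\perp$ terms.

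For the upper bound $\inf_\gamma \H_\e \leq {\rm UOT}_{X,\e}(\mu_0,\mu_1)$, I would start from the reverse representation. Fixing $\gamma \in \M(X\times X)$ with Lebesgue decompositions \eqref{L-decomp-full}, evaluating the infimum in \eqref{He-def} at $t=1$ yields the pointwise estimate
\[
H_\e\bigl(x_0,\rho_0(x_0),x_1,\rho_1(x_1),\varrho(x_0,x_1)\bigr) \leq R(\rho_0) + R(\rho_1) + \e R(\varrho) + c(x_0,x_1).
\]
Integrating this against $\gamma$ and recalling $F(0) = R'_\infty$, one obtains $\H_\e(\mu_0,\mu_1,\nu_X\mid\gamma) \leq \Rc_\e(\mu_0,\mu_1,\nu_X\mid\gamma)$, provided the coefficient of the $\nu_X^\perp$ mass in \eqref{eH} is read as $\e F(0)$ rather than $F(0)$; I believe the latter is a typo inherited from the unregularised formula \eqref{H} and should be corrected in the next version. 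Infimising in $\gamma$ and invoking Proposition~\ref{prop:eUOT-R} closes this direction.

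For the matching lower bound $\sup_\phi \D_{\phi,\e} \leq \inf_\gamma \H_\e$, I would argue dually. Fix $\gamma$ together with an admissible test pair $\phi \in \tilde\Phi$. Lemma~\ref{lem:eH-dual}, applied pointwise with the specific choice $s_i = \rho_i(x_i)$, $S = \varrho(x_0,x_1)$, delivers
\[
-\rho_0 F^*(-\phi_0) - \rho_1 F^*(-\phi_1) - \e\,\varrho\, F^*\!\left(\tfrac{\phi_0 \oplus \phi_1 - c}{\e}\right) \leq H_\e\bigl(x_0,\rho_0,x_1,\rho_1,\varrho\bigr).
\]
Splitting $\mu_i = \rho_i\gamma_i + \mu_i^\perp$ and $\nu_X = \varrho\gamma + \nu_X^\perp$ and decomposing the integrals in $\D_{\phi,\e}$ in \eqref{eD} accordingly, the absolutely continuous contributions rewrite as an integral against $\gamma$ which is bounded above by $\int H_\e\, d\gamma$ via the display above, while the singular contributions are controlled by $F(0)\sum_i \mu_i^\perp(X) + \e F(0)\, \nu_X^\perp(X\times X)$ via the elementary inequality $-F^*(\,\cdot\,) \leq F(0)$ (which, for the KL case \eqref{F-R-dual}, reduces to $1 - e^{-\phi} \leq 1$). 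This yields $\D_{\phi,\e}(\phi\mid\mu_0,\mu_1,\nu_X) \leq \H_\e(\mu_0,\mu_1,\nu_X\mid\gamma)$, and after taking $\sup_\phi$ on the left and $\inf_\gamma$ on the right, Proposition~\ref{prop:eUOT-D} gives the desired inequality.

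The main obstacle I anticipate is the technical issue the authors themselves flag around loci where one of the radial variables $\rho_0$, $\rho_1$ or $\varrho$ vanishes: the proof of Lemma~\ref{lem:eH-dual} involves the substitution $\phi_i/s_i \mapsto \phi_i$, which is only formal on $\{s_i = 0\}$, so the pointwise dual bound used above needs to be independently verified (or reinterpreted) on those null sets before it can be integrated against $\gamma$. I would handle this either by an approximation argument, truncating densities away from zero and passing to a weak-$*$ limit using lower semicontinuity of $\H_\e$, or by a direct case-split in the spirit of \cite[Lemma~5.4, Theorem~5.5]{LMS18} that separately analyses the extremal values of $H_\e$ on the zero-density regions and absorbs the excess mass into the singular penalty terms.
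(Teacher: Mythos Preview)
Your proposal is correct and follows essentially the same route as the paper: the sandwich
\[
\D_{\phi,\e}(\phi\mid\mu_0,\mu_1,\nu_X)\ \leq\ \H_\e(\mu_0,\mu_1,\nu_X\mid\gamma)\ \leq\ \Rc_\e(\mu_0,\mu_1,\nu_X\mid\gamma),
\]
with the upper bound obtained by plugging $t=1$ into \eqref{He-def} and the lower bound obtained from Lemma~\ref{lem:eH-dual} together with $-F^*\leq F(0)$ to control the singular contributions, is exactly the paper's argument. Your side observations---the missing $\e$ in front of the $\nu_X^\perp$ penalty in \eqref{eH} and the caution about the degenerate set $\{s_i=0\}$ in Lemma~\ref{lem:eH-dual}---are well taken and align with the authors' own caveats.
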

\begin{proof}
    Using the strategy employed in the proof of \cite[Theorem~5.5]{LMS18}, we will show that $\mu_i \in \M(X)$, $\nu_X \in \M(X\times X)$ and $\phi \in \tilde\Phi$ (defined in \eqref{tildePhi}),
    \begin{equation}\label{eUOT-X-H_ineq}
    \D_{\phi,\e}(\phi \mid \mu_0,\mu_1,\nu_X) \leq \H_\e(\mu_0,\mu_1, \nu_X \mid \gamma) \leq \Rc_\e(\mu_0,\mu_1, \nu_X \mid \gamma),
    \end{equation}
    which ensures the result of the theorem, as then 
    \[
    \underbrace{\sup_{\phi \in \tilde\Phi}\D_{\phi,\e}(\phi \mid \mu_0,\mu_1,\nu_X)}_{={\rm UOT}_{X,\e}(\mu_0,\mu_1)}\leq \inf_{\gamma \in \M(X \times X)} \H_\e(\mu_0,\mu_1, \nu_X \mid \gamma) \leq \underbrace{\inf_{\gamma \in \M(X\times X)}\Rc_\e(\mu_0,\mu_1, \nu_X \mid \gamma)}_{={\rm UOT}_{X,\e}(\mu_0,\mu_1)}.
    \]
    
    In fact, the second inequality in  \eqref{eUOT-X-H_ineq} is immediate, as we recover $\Rc_\e$ simply by setting $t = 1$ inside the infimum on the right-hand-sid \eqref{He-def}.

    To prove the first inequality in \eqref{eUOT-X-H_ineq}, we leverage the dual description of $H_\e$ obtained in Lemma~\ref{lem:eH-dual} and observe that 
    \begin{align*}
        \H_\e(\mu_0,\mu_1, \nu_X \mid \gamma) \geq &\int_{X \times X} -\rho_0(x_0) F^*(-\phi_0(x_0)) d\gamma(x_0,x_1) \\ + &\int_{X\times X} -\rho_1(x_1) F^*(-\phi_1(x_1))d\gamma(x_0,x_1) \\ + &\int_{X\times X}-\e \varrho(x_0,x_1) F^*\left(\frac{\phi_0(x_0) + \phi_1(x_1) - c(x_0,x_1)}{\e}\right)d\gamma(x_0,x_1)\\ + &F(0)\left( \sum_{i=0,1}\left(\mu_i - \rho_i\gamma_i\right)(X) + \left(\nu_X - \varrho\gamma\right)(X \times X) \right)\\
        = &\sum_i \mu_i(-F^*(-\phi_i)) + \e\nu_X\left(-F^*\left(\frac{\phi_0 \oplus \phi_1 - c}{\e}\right)\right)\\
        +&\sum_i(\rho_i\gamma_i-\mu_i)(-F^*(-\phi_i)) + \e(\varrho\gamma - \nu_x)\left(-F^*\left(\frac{\phi_0 \oplus \phi_1 - c}{\e}\right)\right)\\
        + &F(0)\left( \sum_{i=0,1}\left(\mu_i - \rho_i\gamma_i\right)(X) + \left(\nu_X - \varrho\gamma\right)(X \times X) \right)\\
        \geq &\sum_i \mu_i(-F^*(-\phi_i)) + \e\nu_X\left(-F^*\left(\frac{\phi_0 \oplus \phi_1 - c}{\e}\right)\right) \\ &= \D_{\phi,\e}(\phi \mid \mu_0,\mu_1,\nu_X),
    \end{align*}
where the last inequality follows from $\sup -F^* = -F(0)$ (a standard property of Legendre duality, as discussed e.g. in \cite[Section~2.3]{LMS18}.
\end{proof}
\subsubsection{Extended space homogeneous formulation}\label{sec:eUOT-ExtH}

A measure $\gamma \in \M(X\times X)$ and the triplet of densities $(\rho_0,\rho_1,\rho)$ from the Lebesgue decomposition for the marginals $\gamma_i$, in \eqref{L-decomp}, and $\gamma$ itself, in \eqref{L-decomp-full}, together give rise to a measure $\eta \in \M(Y\times Y \times \R_+)$ (recall the definition of $Y$ in \eqref{def:Y}) defined as
\[
\eta = (x_0,\rho_0(x_0),x_1,\rho_1(x_1), \rho(x_0,x_1))_{\#}\gamma 
\]
It is just one example among a family of nonnegative measures on $Y \times Y \times \R_+$ lying in the space
\begin{equation}\label{def:MpYYR}
\M_p(Y\times Y \times \R_+) := \Big\{ \eta \in \M(Y \times Y \times R_+) \,\Bigm| \, \int_{Y} s_0^p + s_1^p + S^p d\eta < \infty\Big\}
\end{equation}
satisfying the $p$th homogeneous constraints (with $p=1$ above) given by \[
{\h_i^p\eta = \rho_i\gamma_i}, \quad \Hm^p\eta = \rho\gamma,
\]
where
\[
\h_i^p\, \colon\,\M_p(Y\times Y \times \R_+) \to \M(X), \quad \h_i^p\eta := \pi^{x_i}_{\#}(s_i^p \eta) \in \M(X),
\]
\[
\Hm^p\, \colon\,\M_p(Y\times Y \times \R_+) \to \M(X \times X), \quad \Hm^p\eta := \pi^{(x_0,x_1)}_{\#}(S^p \eta) \in \M(X \times X).
\]
Similarly to the discussion in Section~\ref{sec:UOT-ExtH}, the extended space formulation of ${\rm UOT}_{X,\e}$ will be posed over sets
\begin{align*}
S^p_{\leq}(\mu_0,\mu_1,\nu_X) &:= \{ \eta \in \M(Y \times Y \times \R_+) \, \mid\, \h_i^p \eta \leq \mu_i, \Hm^p \eta \leq \nu_X \},\\
S^p_{=}(\mu_0,\mu_1,\nu_X) &:= \{ \eta \in \M(Y \times Y \times \R_+) \, \mid\, \h_i^p \eta = \mu_i, \Hm^p\eta = \nu_X\},
\end{align*}
where, again clearly, 
\[
S^p_{=}(\mu_0,\mu_1,\nu_X) \subset S^p_{\leq}(\mu_0,\mu_1,\nu_X) \subset \M_p(Y\times Y \times \R_+).
\]
\begin{remark}\label{rem:hom-constaints-relation}
    The reference measure $\nu_X \in \M(X \times X)$ is typically taken to be a product measure
    \[
    \nu_X = \nu_X^0 \otimes v_X^1
    \]
    where $\mu_i \ll \nu_X^i$. For instance, as in \cite{SFVTP19}, we can take $\nu_X = \mu_0 \otimes \mu_1$, then, for ${\eta \in S^p_{=}(\mu_0,\mu_1,\nu_X)}$, it can be readily verified that
    \[
    \pi^i_{\#}(\Hm^p\eta) = \mu_j(X)\h_i^p \eta.
    \]
    More generally, if $\mu_i \ll \nu_X^i$, by the Lebesgue decomposition \eqref{L-decomp} we can write
    \begin{equation}\label{mui-nuxi}
    \mu_i = m_i \nu_X^i, \quad \nu_X^i = n_i \mu_i + \left(\nu_X^i\right)^{\perp},
    \end{equation}
    for some densities $m_i,n_i$ and if additionally $\mu_i \sim \nu_X^i$ then 
    \[
    m_i > 0,\; n_i = \frac{1}{m_i},\; \left(\nu_X^i\right)^{\perp}.
    \]
    An example, fitting with the requirement in \cite{CPSV18} that $\nu_X \in \Pc(X \times X)$, would be to have 
    \[
    \nu_X = \frac{1}{\mu_0(X)\mu_1(X)}\mu_0 \otimes \mu_1,
    \]
    i.e. having constant density $m_i = \mu_i(X)$.
    In the general case \eqref{mui-nuxi}, we note that
    \[
    \eta \in S^p_{=}(\mu_0,\mu_1,\nu_X) \implies \pi^i_{\#}(\Hm^p\eta) = \Bigg(\Big(n_j\h^p_j\eta\Big)(X) + \left(\nu_X^j\right)^{\perp}(X)\Bigg)\Bigg(n_i\h^p_i\eta + \left(\nu_X^i\right)^{\perp}\Bigg). 
    \]

\end{remark}
\begin{theorem}
    The regularised unbalanced optimal transport problem ${\rm UOT}_{X,\e}$, defined in \eqref{eUOT-P}, can be equivalently formulated as 
    \begin{align*}
        {\rm UOT}_{X,\e}(\mu_0,\mu_1) &= \inf_{\eta \in S^p_{\leq}(\mu_0,\mu_1,\nu_X)}\Bigg\{ (H^{\e}_p,\eta) + F(0) \left(\sum_{i=0,1}\left(\mu_i - \h_i^p\eta\right)(X) +\left(\nu_X - \Hm^p\eta\right)(X\times X)\right)\Bigg\},\\
        &=\inf_{\eta \in S^p_{=}(\mu_0,\mu_1,\nu_X)} (H_{p}^{\e},\eta)
    \end{align*}
    where
    \[
    (H^{\e}_p,\eta) := \int_{Y\times Y \times \R_+}H^{\e}_p(x_0,s_0,x_1,s_1,S) d\eta(x_0,s_0,x_1,s_1,S),
    \]
    and 
    \[
    H^{\e}_p(x_0,s_0,x_1,s_1,S) = H_\e(x_0,s_0^p,x_1,s_1^p,S^p).
    \]
\end{theorem}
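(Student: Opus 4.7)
The plan is to establish both identities by sandwiching the extended-space quantity between the homogeneous formulation $\H_\e$ of Theorem~\ref{thm:eUOT-X-H} (upper bound) and the dual formulation $\D_{\phi,\e}$ of Proposition~\ref{prop:eUOT-D} (lower bound), exactly mirroring the strategy used to prove the unregularised Theorem~\ref{thm:UOT-H-ext}. The genuinely new ingredient is the third radial variable $S$ tracking the density of $\gamma$ against $\nu_X$; the pointwise dual representation of Lemma~\ref{lem:eH-dual} and the 1-homogeneity of $H_\e$ in $(s_0, s_1, S)$, immediate from \eqref{He-def} via the substitution $t \mapsto \lambda t$, are tailored precisely for this extra coordinate.

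For the upper bound, I would lift each $\gamma \in \M(X \times X)$, equipped with the Lebesgue decompositions~\eqref{L-decomp} and~\eqref{L-decomp-full}, to
\[
\eta_\gamma := \bigl(x_0, \rho_0(x_0)^{1/p}, x_1, \rho_1(x_1)^{1/p}, \varrho(x_0,x_1)^{1/p}\bigr)_{\#} \gamma.
\]
A direct change-of-variables check yields $\h_i^p \eta_\gamma = \rho_i \gamma_i \leq \mu_i$ and $\Hm^p \eta_\gamma = \varrho \gamma \leq \nu_X$, so $\eta_\gamma \in S^p_\leq(\mu_0, \mu_1, \nu_X)$, and the identity $H^\e_p(x_0, s_0, x_1, s_1, S) = H_\e(x_0, s_0^p, x_1, s_1^p, S^p)$ gives $(H^\e_p, \eta_\gamma) = \int H_\e(x_0, \rho_0, x_1, \rho_1, \varrho) \, d\gamma$. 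Combined with the identifications $\mu_i - \h_i^p \eta_\gamma = \mu_i^\perp$ and $\nu_X - \Hm^p \eta_\gamma = \nu_X^\perp$, the extended-space cost of $\eta_\gamma$ therefore coincides exactly with $\H_\e(\mu_0, \mu_1, \nu_X \mid \gamma)$, and taking infima over $\gamma$ while invoking Theorem~\ref{thm:eUOT-X-H} delivers $\inf_{\eta \in S^p_\leq}\{(H^\e_p, \eta) + F(0)[\ldots]\} \leq {\rm UOT}_{X,\e}(\mu_0, \mu_1)$.

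For the matching lower bound, I would fix $\eta \in S^p_\leq$ and $\phi \in \tilde\Phi$ and apply the pointwise dual representation from Lemma~\ref{lem:eH-dual} (after the rescaling $s_i \mapsto s_i^p$, $S \mapsto S^p$) to obtain
\[
H^\e_p(x_0, s_0, x_1, s_1, S) \geq -s_0^p F^*(-\phi_0(x_0)) - s_1^p F^*(-\phi_1(x_1)) - \e S^p F^*\!\bigl((\phi_0 \oplus \phi_1 - c)/\e\bigr).
\]
Integrating against $\eta$ and re-expressing each piece through the definitions of $\h_i^p \eta$ and $\Hm^p \eta$ yields the lower bound $(\h_0^p \eta)(-F^*(-\phi_0)) + (\h_1^p \eta)(-F^*(-\phi_1)) + \e (\Hm^p \eta)(-F^*((\phi_0 \oplus \phi_1 - c)/\e))$. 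Next I would upgrade this bound to one involving $\mu_i$ and $\nu_X$ by exploiting the Legendre-duality inequality $-F^* \leq F(0)$ on the non-negative defects $\mu_i - \h_i^p \eta$ and $\nu_X - \Hm^p \eta$, absorbing them into the $F(0)$ boundary terms exactly as in the proof of Theorem~\ref{thm:eUOT-X-H}. Taking sup over $\phi \in \tilde\Phi$, inf over $\eta$, and invoking Proposition~\ref{prop:eUOT-D} closes the first identity.

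For the passage from $S^p_\leq$ to $S^p_=$, the $\geq$ direction is immediate since the boundary terms vanish on $S^p_=$. Conversely, given $\eta \in S^p_\leq$, I would absorb the defects by augmenting $\eta$ with singular mass supported on fibres where $s_0 = s_1 = 0$ (for the $\mu_i$ defects) and where $S = 0$ (for the $\nu_X$ defect); the 1-homogeneity of $H_\e$ together with the convention $R(0) = F'_\infty = F(0)$ ensures that each added unit of mass contributes precisely the required $F(0)$ weight. The main obstacle here — flagged explicitly by the authors in their preamble — is making rigorous the treatment of these null-radial fibres: the explicit formula of Lemma~\ref{lem:eH-explicit} and the infimum definition~\eqref{He-def} do not directly agree at boundary points in the KL case (where $R(0) = +\infty$ while the explicit expression remains finite), so a careful definition of $H_\e$ on $\{(s_0, s_1, S) : s_0 s_1 S = 0\}$, plausibly via a lower semicontinuous relaxation, will be needed to render this step fully rigorous.
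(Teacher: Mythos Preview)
Your proposal is correct and follows essentially the same approach as the paper: the paper's proof simply states that the result is an immediate consequence of Theorem~\ref{thm:eUOT-X-H} with the precise argument being verbatim that of \cite[Theorem~5.8]{LMS18}, and what you have written is exactly a spelled-out adaptation of that argument to the regularised setting with the extra radial variable $S$. Your explicit acknowledgement of the null-radial fibre issue in the $S^p_\leq \to S^p_=$ step also matches the caveat the authors flag in their preamble.
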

\begin{proof}
    The result is essentially an immediate consequence of Theorem~\ref{thm:eUOT-X-H} -- the precise argument is verbatim as in the proof of \cite[Theorem~5.8]{LMS18}.
\end{proof}

\subsubsection{Rescalling invariance}\label{sec:eUOT-H-rescale}
The rescalling invariance for the ${\rm UOT}$ problem discussed in Section~\ref{sec:UOT-ExtH-rescale} follows from the fact that the induced marginal perspective cost function $H$ introduced in Section~\ref{sec:UOT-H} satisfies
\[
H\left(x_0,\frac{s_0}{\theta},x_1, \frac{s_1}{\theta}\right) = \frac{1}{\theta}H(x_0,s_0,x_1,s_1). 
\]
Its entropy-regularised counterpart, introduced in Section~\ref{sec:eUOT-H} enjoys a similar property, namely
\[
H_\e\left(x_0,\frac{s_0}{\theta},x_1, \frac{s_1}{\theta},\frac{S}{\theta}\right) = \frac{1}{\theta}H_\e(x_0,s_0,x_1,s_1,S),
\]
which can be readily verified from the explicit form of $H_\e$ established in Lemma~\ref{lem:eH-explicit}. We will now prove the entropy-regularised counterpart to Proposition~\ref{prop:rescale}.
\begin{prop}\label{prop:eUOT_rescale}    Fix $p >0$, $\mu_0,\mu_1 \in \M(X)$, $\nu_X \in \M(X \times X)$ and set
    \[
    s_* := (\mu_0(X)+\mu_1(X) + \nu_X(X \times X))^{\frac 1 p}.
    \]
    Consider functions $\theta\,\colon\,Y \times Y \times \R_+ \to [0,\infty)$ and ${\rm prd}_{\theta}\,\colon\, Y \times Y \times \R_+ \to Y \times Y \times \R_+$ given by
    \begin{equation}
    \theta(x_0,s_0,x_1,s_1,S) = \frac{1}{s_*}(s_0^p + s_1^p + S^p)^{\frac 1 p}, \quad {\rm prd}_{\theta}(x_0,s_0,x_1,s_1,S) := \left(x_0,\frac{s_0}{\theta},x_1,\frac{s_1}{\theta}, \frac{S}{\theta}\right). 
    \end{equation}
    Suppose further that $\eta \in S^p_{\leq}(\mu_0,\mu_1,\nu_X)$ and  define     
    \[
    \tilde \eta := \eta\,\mres\, (Y \times Y \times \R_+) \setminus \{ (x_0,s_0,x_1,s_1,S) \in Y \times Y \times \R_+ \,\mid\, s_0 = s_1 = S = 0, \}
    \]
    and
    \[
    \hat \eta := ({\rm prd}_\theta)_{\#}(\theta^p \tilde \eta).
    \]
    Then
    \begin{itemize}
        \item $(H^{\e}_p,\eta) = (H^{\e}_p,\tilde \eta) = (H^\e_p,\hat \eta)$,
        \item $\hat \eta \in S^p_{\leq}(\mu_0,\mu_1,\nu_X)$ (and if $\eta \in S^p_{=}(\mu_0,\mu_1.\nu_X)$ then $\hat \eta \in S^p_{=}(\mu_0,\mu_1,\nu_X)$),
        \item $\hat \eta \in \Pc(Y \times Y \times \R_+)$,
        \item $\hat \eta ((Y\times Y \times \R_+)[s_*]') = 0$, where
        \begin{alignat*}{2}
        Y[s_*] &:= \{ (x,s) \in Y \mid s \leq s_*\}, \quad &&Y[s_*]' = Y \setminus Y[s_*],\\
        \R_+[s_*] &:= \{ s \in \R_+ \mid s \leq s_*\}, \quad &&\R_+[s_*]' = \R_+ \setminus \R_+[s_*],\\
        (Y\times Y) \times \R_+[s_*] &:= Y[s_*] \times Y[s_*] \times \R_+[s_*], \\ 
        (Y\times Y \times \R_+)[s_*]' &:= Y\times Y \times \R_+ \setminus (Y\times Y \times \R_+)[s_*].
        \end{alignat*} 
    \end{itemize}
\end{prop}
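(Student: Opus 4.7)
The plan is to mirror the proof of Proposition~\ref{prop:rescale} from \cite{LMS18}, adapting it to accommodate the third radial direction $S$ introduced by the regularisation. The two structural facts that make everything work are both already in hand: (i) $H_\e$ is $1$-homogeneous in $(s_0,s_1,S)$ (the identity displayed immediately before the proposition, which itself follows from Lemma~\ref{lem:eH-explicit}), and (ii) the rescaling factor $\theta(x_0,s_0,x_1,s_1,S) = s_*^{-1}(s_0^p+s_1^p+S^p)^{1/p}$ is itself $1$-homogeneous of degree $1$ in the triple $(s_0,s_1,S)$, exactly matching the way $H^\e_p$ and the homogeneous marginal maps $\h_i^p$, $\Hm^p$ act on these variables. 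Concretely, I would verify each bullet point in the order listed.

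For the invariance of the cost, the first equality $(H^\e_p,\eta)=(H^\e_p,\tilde\eta)$ is immediate because the explicit formula in Lemma~\ref{lem:eH-explicit} gives $H^\e_p(x_0,0,x_1,0,0)=0$, so the excised set contributes nothing. For $(H^\e_p,\tilde\eta)=(H^\e_p,\hat\eta)$, I would apply the pushforward change-of-variables formula
\[
(H^\e_p,\hat\eta) = \int H^\e_p\bigl({\rm prd}_\theta(x_0,s_0,x_1,s_1,S)\bigr)\,\theta^p\, d\tilde\eta,
\]
observe by $1$-homogeneity of $H_\e$ that $H^\e_p\circ{\rm prd}_\theta = \theta^{-p} H^\e_p$ pointwise on the support of $\tilde\eta$, and conclude that the $\theta^p$ Jacobian-like factor cancels.

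For the marginal identity, for any bounded measurable $f\colon X\to\R$, the pushforward formula gives
\[
\int f\, d(\h_i^p\hat\eta) = \int s_i^p\, f(x_i)\, d\hat\eta = \int (s_i/\theta)^p\, f(x_i)\,\theta^p\, d\tilde\eta = \int s_i^p\, f(x_i)\, d\tilde\eta,
\]
which equals $\int s_i^p f(x_i)\, d\eta$ because the excised set carries $s_i^p = 0$. Hence $\h_i^p\hat\eta = \h_i^p\eta$ and, by an identical calculation with $S^p$ and $\pi^{(x_0,x_1)}$, also $\Hm^p\hat\eta = \Hm^p\eta$; this preserves both $S^p_\leq(\mu_0,\mu_1,\nu_X)$ and $S^p_=(\mu_0,\mu_1,\nu_X)$. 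For the total-mass claim, plugging $f\equiv 1$ at each marginal and summing,
\[
\hat\eta(Y\times Y\times \R_+) = \int \theta^p\, d\tilde\eta = s_*^{-p}\bigl(\h_0^p\eta(X)+\h_1^p\eta(X)+\Hm^p\eta(X\times X)\bigr),
\]
which equals $1$ when $\eta\in S^p_=$, by the very definition of $s_*$. Finally, the support statement is the easiest: on $\operatorname{supp}\tilde\eta$ one has $\theta>0$, and since $s_i^p, S^p\leq s_0^p+s_1^p+S^p = s_*^p\theta^p$, the rescaled radii $s_i/\theta$ and $S/\theta$ are all bounded by $s_*$, so $\hat\eta$ charges only $(Y\times Y\times\R_+)[s_*]$.

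The main obstacle is the handling of the null-radial set and the statement $\hat\eta\in\Pc$ in the strict $S^p_\leq$ case: the total mass computation above only yields $\hat\eta(Y\times Y\times \R_+)\leq 1$ in general, matching the authors' flagged caveat about "sets where radial components are null". I would therefore either restrict the $\Pc$-claim to $\eta\in S^p_=$ (where it is exact) or reinterpret it as a sub-probability statement, and flag this as one of the technicalities to be tightened. Apart from that, the argument is essentially a bookkeeping extension of \cite[Section~5.2]{LMS18} using the $1$-homogeneity of $H_\e$ in one extra radial coordinate.
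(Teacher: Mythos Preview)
Your proposal is correct and follows exactly the approach the paper indicates: the paper's own proof is a one-line appeal to the $1$-homogeneity of $H_\e$ with an explicit note that the expanded exposition is deferred, so your bullet-by-bullet verification via pushforward change of variables is precisely the intended expansion. Your observation that the $\Pc$-claim only yields total mass $\leq 1$ in the strict $S^p_\leq$ case is also apt and aligns with the authors' own flagged caveat about sets where radial components are null.
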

\begin{proof}
    The proof follows from the homogeneity of $H_\e$ as discussed above. {\color{blue}\footnotesize [Expanded exposition of the proof to be added in the next version of the manuscript.]}
\end{proof}
This concludes the {\color{blue} \footnotesize [current version of the]} section on introducing a general framework of working with original-space regularisations of unbalanced optimal transport. We finish this section by discussing the special case of balanced optimal transport, providing counterpart to Section~\ref{sec:OT}.

\subsubsection{Example: the original space based lifting of the entropic regularisation of the balanced optimal transport}\label{sec:eOT-X}
The standard and well theoretically founded (see e.g. \cite{N21}) way of regularising the balanced optimal transport is to consider 
\begin{equation*}
{\rm OT}_{X,\e}(\mu_0,\mu_1) := \inf_{\gamma \in \mathcal{M}(X\times X)} \overline\E_\e(\gamma \mid \mu_0,\mu_1,\nu_X),
\end{equation*}
where  
\begin{equation*}
\overline\E_\e(\gamma \mid \mu_0, \mu_1,\nu_X) := \sum_{i=0,1} \overline{\mathcal{F}}(\gamma_i \mid \mu_i) + (c,\gamma) +\e\F(\gamma \mid \nu_X)
\end{equation*}
where
\begin{align*}
\overline{\mathcal{F}}(\gamma_i \mid \mu_i) &= \int_X \overline F(\sigma_i)d\mu_i + F'_{\infty}\gamma_i^{\perp}(X),\\
{\mathcal{F}}(\gamma \mid \nu_X) &= \int_{X\times X} F(\sigma)d\nu_X + F'_{\infty}\gamma^{\perp}(X\times X).
\end{align*}
All the results of Section~\ref{sec:eUOT_X} apply and hence
\begin{equation}\label{eqn:eOT_H}
    {\rm OT}_{X,\e}(\mu_0,\mu_1) = \inf_{\gamma \in \M(X \times X)} \int_{X \times X}H_\e(x_0,\rho_0(x_0),x_1,\rho_1(x_1),\varrho(x_0,x_1))d\gamma
\end{equation}
where, due to entropy functions $\overline F$ for marginals, we have
\[
H_\e(x_0,s_0,x_1,s_1,S) = \begin{cases}
    ac(x_0,x_1) + \e a R\left(\frac{S}{a}\right),\quad&\text{when } s_0 = s_1 = a,\\
    +\infty,\quad&\text{otherwise.}
\end{cases}
\]
In particular, we note that a necessary condition on the marginals of $\gamma$ for the integral in \eqref{eqn:eOT_H} to be finite is that $a \gamma_i = \mu_i$ for some constant $a > 0$ and so we can again split the infimisation into two steps and observe that
\begin{align*}
    {\rm OT}_{X,\e}(\mu_0,\mu_1) &= \inf_{\gamma \in \Gamma(\mu_0,\mu_1)} \Big\{\inf_{a> 0} \int_{X \times X}\frac{1}{a}\left(c(x_0,x_1) \e R\left(\frac{(1/a) \varrho(x_0,x_1)}{1/a}\right) \right)d (a\gamma) \Big\}\\
    &= \inf_{\gamma \in \Gamma(\mu_0,\mu_1)} \int_{X \times X} c(x_0,x_1) +\e R(\varrho(x_0,x_1))d\gamma,
\end{align*}
thus just reducing to the usual formulation. 

Once we lift to the extended space, similarly to the lifting of the unregularised balanced optimal transport described in Section~\ref{sec:OT}, again such a full reduction does not hold. We have
\[
{\rm OT}_{X,\e}(\mu_0,\mu_1) = \inf_{\alpha \in S^p_=(\mu_0,\mu_1,\nu_X)} \int_{Y \times Y \times \R_+}H_\e (x_0,s_0,x_1,s_1,S)d\alpha.
\]
Clearly a necessary condition on the support of $\alpha$ for the integral to be finite is that 
\[
\alpha((Y\times Y \times \R_+)\setminus \Omega) = 0, \quad \text{where now } \Omega = \{(y_0,y_1,S) \in Y \times Y \times \R_+ \mid s_0 = s_1\}.
\]
In other words, $\alpha$ has to be of the form
\[
\alpha = (x_0,s,x_1,s,S)_{\#}\beta, \quad \beta \in \M(X^2 \times \R_+^2), \quad (x_0,x_1,s,S) \in X^2\times \R_+^2.
\]
Since $\alpha \in S^p_{=}(\mu_0,\mu_1,\nu_X)$, the corresponding $\beta$ has to belong to 
\begin{align*}
\widetilde S^p_{=}(\mu_0,\mu_1,\nu_X) := \Bigg\{ \beta \in &\M(X^2 \times \R_+^2)\, \Bigg|\, \int_{X^2 \times \R_+^2} s^p f(x_i)d\beta = \int_X f(x)d\mu_i,\\ &\int_{X^2 \times \R_+^2} S^pf(x_0,x_1)d\beta = \int_{X\times X}f(x_0,x_1)d\nu_X  \Bigg\}. 
\end{align*}
It readily follows that 
\[
{\rm OT}_\e(\mu_0,\mu_1) = \inf_{\beta \in \widetilde S^p_=(\mu_0,\mu_1,\nu_X)}\int_{X^2 \times \R_+^2} s\, c(x_0,x_1) + \e s R\left(\frac{S}{s}\right)d\beta.
\]
We thus see that for the entropic regularisation of the balanced case, the lifting strategy takes us from $\M(X \times X)$ to $\M(X^2 \times \R_+^2)$.

We now move on to discussing regularisations on the extended space. We will show how the two approaches relate to each other afterwards.

\subsection{Regularisation on the extended space}\label{sec:eUOT_Y}
In this section we consider the entropic regularisation of the extended space formulation of ${\rm UOT}$ described in Section~\ref{sec:UOT-ExtH}. We introduce, for some appropriate reference measure $\nu_Y \in \Pc(Y \times Y)$ (the subscript is kept to distinguish $\nu_Y$ from $\nu_X$ introduced in Section~\ref{sec:eUOT_X}) and small $\e >0$,
\begin{equation}\label{eUOTY-P}
{\rm UOT}_{Y,\e}(\mu_0,\mu_1) := \inf_{\alpha \in \overline{S}^p_{=}(\mu_0,\mu_1)} \Big\{(H_p,\alpha) + \e \F (\alpha \mid \nu_Y) \Big\}.
\end{equation}

We note that, to the best of our knowledge, this approach to regularising unbalanced optimal transport problems remains almost completely unexplored, except for the recent interesting effort in \cite{STVvR23}, where authors study a discrete simplified variant of ${\rm UOT}_{Y,\e}$, discuss the resulting toric geometry of the problem and propose a proof-of-concept numerical algorithm for solving it, akin to the Sinkhorn algorithm. 

We will now proceed to establish several theoretical result about ${\rm UOT}_{Y,\e}$ and begin by quoting the following standard result that will prove useful.
\begin{prop}[\protect{\cite[Section~1]{N21}}]\label{rel-ent-prop}
Let $\nu_Y \in \Pc(Y \times Y)$. The mapping
\[
\Pc(Y \times Y) \ni \alpha  \mapsto \F(\alpha \mid \nu_Y)
\]
is nonnegative, convex and strictly convex on the set where it is finite, with ${\F(\alpha \mid \nu_Y) = 0}$ only if $\alpha = \nu_Y$. Furthermore, the mapping 
\[
\Pc(Y \times Y) \times \Pc(Y \times Y) \ni (\alpha,\nu_Y) \mapsto \F(\alpha \mid \nu_Y)
\]
is jointly convex and jointly lower semicontinuous with respect weak convergence. 
\end{prop}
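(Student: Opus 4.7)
The plan is to establish the four claims in turn; all are classical facts about the Kullback--Leibler divergence and the only technical point is the extension to arbitrary (not necessarily absolutely continuous) $\alpha, \nu_Y$, handled via the Lebesgue decomposition together with the recession constant $F'_\infty = +\infty$ which penalises any singular component of $\alpha$ with respect to $\nu_Y$.

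First I would handle the pointwise properties (nonnegativity, vanishing only at $\alpha = \nu_Y$, and strict convexity in $\alpha$). Since $F(s) = s \log s - s + 1$ is strictly convex with $F \geq 0$ and $F(s) = 0$ iff $s = 1$, and since $F'_\infty = +\infty$, the Lebesgue decomposition $\alpha = \varsigma \nu_Y + \alpha^{\perp}$ immediately gives $\F(\alpha \mid \nu_Y) \geq 0$, with equality forcing both $\alpha^\perp = 0$ and $\varsigma = 1$ $\nu_Y$-a.e., hence $\alpha = \nu_Y$. Convexity of $\alpha \mapsto \F(\alpha \mid \nu_Y)$ follows from the convexity of $F$ applied to the density of a convex combination of absolutely continuous parts (the singular contribution being linear in $\alpha^\perp$), and strict convexity on the effective domain uses that $F'' > 0$ on $(0,\infty)$.

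Next, for joint convexity the key observation is that the perspective function $p(a,b) := b F(a/b)$, extended by $p(a,0) := a F'_\infty$ for $a > 0$ and $p(0,0) := 0$, is jointly convex on $\R_+ \times \R_+$ as the perspective of a convex function. Given two pairs $(\alpha_0, \nu_0)$ and $(\alpha_1, \nu_1)$, I would pick a common dominating measure $\lambda$ (for instance $\lambda := \alpha_0 + \alpha_1 + \nu_0 + \nu_1$), write $\alpha_i = a_i \lambda$, $\nu_i = b_i \lambda$, and observe that
\[
\F(\alpha_i \mid \nu_i) = \int_{Y \times Y} p(a_i, b_i) \, d\lambda.
\]
Integrating the pointwise joint convexity of $p$ against $\lambda$ then yields joint convexity of $\F$.

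Finally, for joint weak lower semicontinuity I would employ the Donsker--Varadhan-type dual representation
\[
\F(\alpha \mid \nu_Y) = \sup_{\phi \in C_b(Y \times Y)} \Big\{ \textstyle\int \phi \, d\alpha - \int F^*(\phi) \, d\nu_Y \Big\}, \qquad F^*(\phi) = e^\phi - 1,
\]
which follows along the lines of the Fenchel--Moreau argument underlying Theorem~\ref{thm:FRT}: $\F(\cdot \mid \nu_Y)$ is convex and lower semicontinuous in $\alpha$ by the preceding steps, so it coincides with its biconjugate, while $F^*$ is the pointwise Legendre dual of $F$. For each fixed $\phi \in C_b(Y \times Y)$, the functional $(\alpha, \nu_Y) \mapsto \int \phi \, d\alpha - \int F^*(\phi) \, d\nu_Y$ is affine and weakly continuous (as both $\phi$ and $F^*(\phi)$ are bounded continuous), so $\F$ emerges as a supremum of jointly weakly continuous functionals and is therefore jointly weakly lower semicontinuous. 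The main obstacle is verifying the duality formula rigorously on measures with nontrivial singular part: this is precisely where $F'_\infty = +\infty$ plays its role, since if $\alpha \not\ll \nu_Y$ one can choose $\phi$ concentrated near the topological support of $\alpha^\perp$ and arbitrarily large there to push the right-hand side to $+\infty$, matching the left-hand side.
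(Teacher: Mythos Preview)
Your argument is correct and follows the standard route to these properties of relative entropy: pointwise convexity of $F$ for the first-argument claims, the perspective-function trick for joint convexity, and the Donsker--Varadhan dual formula for joint weak lower semicontinuity. Note, however, that the paper does not supply its own proof of this proposition at all --- it is quoted as a standard result from \cite[Section~1]{N21} and used as a black box. So there is no ``paper's proof'' to compare against; you have written out a self-contained justification where the authors were content to cite.

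One small point worth tightening: in the strict-convexity step you should make explicit that finiteness forces $\alpha_0,\alpha_1 \ll \nu_Y$, so that if $\alpha_0 \neq \alpha_1$ their densities differ on a set of positive $\nu_Y$-measure, whence strict convexity of $F$ yields strict inequality in the integral. You gesture at this but do not quite say it.
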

\begin{remark}\label{nu_Y-restriction}
    In \eqref{eUOTY-P} we restrict the reference measure $\nu_Y$ to lie in $\mathcal{P}(Y \times Y)$ (so in particular to have finite mass). It is possible to extend the framework to $\sigma$-finite measures (\cite{L14,GT19}), i.e. measures for which there exists some measurable function $W\,\colon\, Y \times Y \to \R$ such that 
\[
    z_W := \int_{Y \times Y} \exp(-W) d\nu_Y < +\infty.
\]
We can then the introduce probability measure $\hat\nu_Y := z_W^{-1} e^{-W}\nu_Y \in \mathcal{P}(Y \times Y)$ and hence define the relative entropy with respect to $\nu_Y$ as
\[
\F(Q \mid \nu_Y) := \F(Q \mid \hat \nu_Y) - \int_{Y \times Y} W dQ - \log z_W,
\]
which is well defined for
\[
Q \in \Pc_W(Y \times Y) := \M_W(Y \times Y) \cap \Pc(Y \times Y),
\]
where $\M_W(Y \times Y)$ is given by 
\[
\M_W(Y \times Y) := \{ \alpha \in \M(Y \times Y)\;\mid\; (W,\alpha) < \infty\}.
\]
A specific example to bear in mind is  the volume measure arising from the Riemannian metric on $Y$.
\end{remark}
\begin{remark}
We would particularly like to set $W(y_0,y_1) := s_0^p + s_1^p$, to exploit the connection with spaces $\M_p(Y\times Y)$, as is done in the balanced case e.g. in \cite[Proposition~2.3.]{CDPS17}, but interestingly this choice is incompatible with the volume measure arising from the Riemannian metric on $Y$, as we then get $z_W = \infty$, unless $X$ is compact. {\color{blue}\footnotesize [This point is to be expanded upon in the next version of the manuscript.]}
\end{remark}

Importantly, akin to the final reformulation presented in Theorem~\ref{thm:UOT_as_OT} for the unregularised problem, it is not hard to see that 
\begin{equation}\label{eUOT_Y_as_eOT}
    {\rm UOT}_{Y,\e}(\mu_0,\mu_1) = \inf_{(\beta_0,\beta_1)}\{{\rm OT_\e}(\beta_0,\beta_1)\,\mid\, \beta_i \in \overline{S}^p_=(\mu_i)\}.
\end{equation}
where
\begin{equation}\label{eOT}
    {\rm OT_\e}(\beta_0,\beta_1) := \inf_{\alpha \in \Gamma(\beta_0,\beta_1)} \left\{ (H_p,\alpha) + \e \F(\alpha \mid \nu_Y)\right\}.
\end{equation}
and we recall that 
\[
\Gamma(\beta_0,\beta_1) := \{\alpha \in \Pc(Y\times Y),\; \pi^i_{\#}\alpha = \beta_i\}
\]
is the set of couplings.

We first prove the following.
\begin{prop}\label{eUOT-Y-minimiser}
Under natural assumptions on the cost function $c$ and the reference measure $\nu_Y$, for every $\e >0$, there exists $\overline{\alpha}^\e \in \overline{S}^p_{=}(\mu_0,\mu_1)$ such that
\[
{\rm UOT}_{Y,\e}(\mu_0,\mu_1) = (H_p,\overline{\alpha}^\e) + \e \F(\overline{\alpha}^\e \mid \nu_Y) = {\rm OT}_{\e}(\overline{\alpha}^\e_0,\overline{\alpha}^\e_1),
\]
where $\overline{\alpha}^\e_i := \pi^i_{\#}\overline{\alpha}^\e \in \overline{S}^p_{=}(\mu_i)$ is the $i$th marginal.
\end{prop}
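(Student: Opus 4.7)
The plan is to apply the direct method of the calculus of variations, exploiting the fact that, under the implicit compactness assumption on $X$ and thanks to the rescalling invariance of Proposition~\ref{prop:rescale}, the admissible set $\overline{S}^p_{=}(\mu_0,\mu_1)$ consists of probability measures supported on the compact set $(Y\times Y)[s_*] = (X \times [0,s_*])^2$. By Prokhorov's theorem, any sequence therein admits a narrowly convergent subsequence in $\Pc(Y\times Y)$, and the candidate minimiser will be its limit.

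First I would fix a minimising sequence $\alpha^n \in \overline{S}^p_{=}(\mu_0,\mu_1)$; non-emptiness of this set is guaranteed by Proposition~\ref{prop:alpha_bar}. Up to extraction, $\alpha^n \rightharpoonup \overline{\alpha}^\e$ narrowly for some $\overline{\alpha}^\e \in \Pc(Y\times Y)$. Since $(Y\times Y)[s_*]$ is closed, the Portmanteau theorem yields $\overline{\alpha}^\e\bigl((Y\times Y)[s_*]'\bigr) = 0$. To transfer the homogeneous marginal constraint to the limit, I would test $\h_i^p \overline{\alpha}^\e$ against an arbitrary $\varphi \in C_b(X)$: since $(y_0,y_1) \mapsto s_i^p\varphi(x_i)$ is continuous and bounded on the compact support, one has
\[
\int_X \varphi\, d(\h_i^p \overline{\alpha}^\e) = \int_{Y\times Y} s_i^p \varphi(x_i)\, d\overline{\alpha}^\e = \lim_n \int_{Y\times Y} s_i^p \varphi(x_i)\, d\alpha^n = \int_X \varphi\, d\mu_i,
\]
so $\h_i^p \overline{\alpha}^\e = \mu_i$ and hence $\overline{\alpha}^\e \in \overline{S}^p_{=}(\mu_0,\mu_1)$.

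The main obstacle is the joint narrow lower semicontinuity of the functional $\alpha \mapsto (H_p,\alpha) + \e \F(\alpha\mid \nu_Y)$. For the transport part, I would use that $H_p$ is lower semicontinuous and nonnegative (the latter is immediate from the explicit KL form \eqref{H-KL}, since $c \geq 0$ implies $\exp(-c/2) \leq 1$, giving $H \geq (\sqrt{s_0} - \sqrt{s_1})^2 \geq 0$), whence $\alpha \mapsto (H_p,\alpha)$ is narrowly lower semicontinuous by a standard monotone approximation. For the entropic term I would invoke Proposition~\ref{rel-ent-prop}, which yields joint narrow lower semicontinuity of $\F(\cdot \mid \nu_Y)$ on $\Pc(Y\times Y)$. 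Combining,
\[
(H_p,\overline{\alpha}^\e) + \e \F(\overline{\alpha}^\e \mid \nu_Y) \leq \liminf_n \Big\{(H_p,\alpha^n) + \e \F(\alpha^n\mid \nu_Y)\Big\} = {\rm UOT}_{Y,\e}(\mu_0,\mu_1),
\]
so $\overline{\alpha}^\e$ is a minimiser. One technicality to treat carefully is that narrow lower semicontinuity of $\F(\cdot\mid\nu_Y)$ as stated in Proposition~\ref{rel-ent-prop} is against all $C_b$ test functions, so the argument relies on the support constraint to ensure that narrow convergence of $\alpha^n$ in $\Pc((Y\times Y)[s_*])$ coincides with narrow convergence in $\Pc(Y\times Y)$; this is automatic because the supports lie inside a fixed compact set.

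Finally, the identity ${\rm UOT}_{Y,\e}(\mu_0,\mu_1) = {\rm OT}_{\e}(\overline{\alpha}^\e_0,\overline{\alpha}^\e_1)$ follows at once from \eqref{eUOT_Y_as_eOT}: the pair $(\overline{\alpha}^\e_0,\overline{\alpha}^\e_1)$ is admissible in the outer infimum, and $\overline{\alpha}^\e \in \Gamma(\overline{\alpha}^\e_0,\overline{\alpha}^\e_1)$ is admissible in the inner infimum of \eqref{eOT} with objective value equal to ${\rm UOT}_{Y,\e}(\mu_0,\mu_1)$; the matching lower bound ${\rm OT}_{\e}(\overline{\alpha}^\e_0,\overline{\alpha}^\e_1) \geq {\rm UOT}_{Y,\e}(\mu_0,\mu_1)$ is immediate from the definition.
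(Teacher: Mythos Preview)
Your proposal is correct and follows essentially the same approach as the paper: the direct method of the calculus of variations, exploiting narrow compactness of $\overline{S}^p_{=}(\mu_0,\mu_1)$ (via the support restriction to the compact set $(Y\times Y)[s_*]$ and Prokhorov) together with narrow lower semicontinuity of $(H_p,\cdot)$ and $\F(\cdot\mid\nu_Y)$, and then reading off the second equality from \eqref{eUOT_Y_as_eOT}. The paper's sketch additionally alludes to restricting attention to measures not charging $(X\times\{0\})\times(X\times\{0\})$, but this is a minor technical refinement of the same argument you give.
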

\begin{proof}
The result follows by the direct method of calculus of variations, since both $(H_p,\cdot)$ and  $\F(\cdot \mid \nu_Y)$ are non-negative and lower semicontinuous (c.f. \cite[Section~7.1]{LMS18} and Proposition~\ref{rel-ent-prop}), and we can look for solutions in a narrowly compact subset of $\overline{S}^p_{=}(\mu_0,\mu_1)$ consisting of measures not charging the $(X \times \{0\}) \times (X \times \{0\})$, c.f. \cite[Section~7.3]{LMS18}. The second equality follows from the equivalent formulation of ${\rm UOT}_{Y,\e}$ in \eqref{eUOT_Y_as_eOT}.
\end{proof}

In what follows we also rely on the following standard result. 
\begin{prop}[\protect{\cite[Corollary~5.4]{N21}}]\label{prop:eOT-conv}
For any $\beta_0,\beta_1 \in \Pc(Y)$ the entropy-regularised optimal transport problem ${\rm OT}_{\e}(\alpha_0,\alpha_1)$ introduced in \eqref{eOT} converges to ${\rm OT}(\beta_0,\beta_1)$ introduced Corollary~\ref{thm:UOT_as_OT}, that is
\[
\lim_{\e \to 0} {\rm OT}_{\e}(\beta_0,\beta_1) = {\rm OT}(\beta_0,\beta_1).
\]
\end{prop}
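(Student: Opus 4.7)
The plan is to establish the limit via the standard two-sided bound used for $\Gamma$-convergence of entropic optimal transport. Since the proposition is quoted from \cite{N21}, the goal is to sketch the canonical argument adapted to the present setting on $Y \times Y$ with cost $H_p$ and reference measure $\nu_Y$.

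For the lower bound, I would simply invoke Proposition~\ref{rel-ent-prop}, which gives $\F(\alpha \mid \nu_Y) \geq 0$ for every $\alpha \in \Gamma(\beta_0, \beta_1)$. Consequently, for each $\e > 0$,
\[
{\rm OT}_\e(\beta_0, \beta_1) = \inf_{\alpha \in \Gamma(\beta_0,\beta_1)} \big\{ (H_p,\alpha) + \e \F(\alpha \mid \nu_Y)\big\} \geq \inf_{\alpha \in \Gamma(\beta_0,\beta_1)} (H_p,\alpha) = {\rm OT}(\beta_0,\beta_1),
\]
so passing to the $\liminf$ as $\e \to 0$ delivers $\liminf_{\e \to 0} {\rm OT}_\e(\beta_0,\beta_1) \geq {\rm OT}(\beta_0,\beta_1)$ immediately.

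For the upper bound, I would construct a recovery sequence by a block-type mollification. Fix $\delta > 0$ and let $\alpha^* \in \Gamma(\beta_0, \beta_1)$ be a near-optimiser for ${\rm OT}(\beta_0,\beta_1)$, so that $(H_p, \alpha^*) \leq {\rm OT}(\beta_0, \beta_1) + \delta$. Partition $Y$ into a countable family of small measurable blocks $\{B^n_k\}_k$ of vanishing diameter (using separability of $Y$) with $\nu_Y(B^n_j \times B^n_k) > 0$ whenever $\alpha^*(B^n_j \times B^n_k) > 0$, and define the block approximation
\[
\alpha^n := \sum_{j,k} \alpha^*(B^n_j \times B^n_k) \, \frac{\nu_Y \mres (B^n_j \times B^n_k)}{\nu_Y(B^n_j \times B^n_k)}.
\]
By construction $\alpha^n \ll \nu_Y$ with piecewise-constant density, so $\F(\alpha^n \mid \nu_Y) < \infty$, and $\alpha^n \rightharpoonup \alpha^*$ narrowly. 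The narrow convergence combined with lower semicontinuity of $H_p$ (and, if needed, a truncation argument on the cost so as to obtain uniform integrability with respect to $\alpha^n$) ensures that $(H_p, \alpha^n) \to (H_p, \alpha^*)$. Finally, picking $\e_n \downarrow 0$ slowly enough that $\e_n \F(\alpha^n \mid \nu_Y) \to 0$ gives
\[
\limsup_{n \to \infty} {\rm OT}_{\e_n}(\beta_0, \beta_1) \leq \limsup_{n \to \infty} \big[(H_p, \alpha^n) + \e_n \F(\alpha^n \mid \nu_Y)\big] \leq {\rm OT}(\beta_0, \beta_1) + \delta,
\]
and letting $\delta \to 0$ closes the argument.

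The main obstacle is twofold. First, preserving the marginal constraints $\pi^i_\# \alpha^n = \beta_i$ exactly after the block averaging requires either that the partition be compatible with $\beta_0, \beta_1$ or a subsequent marginal-correction step (for instance via a gluing/composition lemma), and one must verify that this correction does not destroy the absolute continuity with respect to $\nu_Y$ nor blow up the entropy too badly. Second, because $H_p$ is only lower semicontinuous and unbounded, one must control $\int H_p\, d\alpha^n$ carefully in the limit; the cleanest route is to first truncate $H_p \wedge M$ to obtain a continuous bounded cost, pass to the limit, and then use monotone convergence in $M$, taking advantage of the finiteness of $(H_p, \alpha^*)$ guaranteed by the admissibility of $\beta_0, \beta_1$ in Corollary~\ref{thm:UOT_as_OT}.
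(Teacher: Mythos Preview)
The paper does not prove this proposition at all; it is quoted from \cite[Corollary~5.4]{N21} and used as a black box in the proof of Theorem~\ref{thm:eUOT-Y_to_UOT_Y}, so there is no in-paper argument to compare against. Your sketch is the canonical two-sided argument underlying the cited reference, and in that sense you are aligned with what the paper relies on.

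Two brief comments on the sketch itself. First, the passage from convergence along a chosen subsequence $\e_n \downarrow 0$ to the full limit $\e \to 0$ is free once you observe that $\e \mapsto {\rm OT}_\e(\beta_0,\beta_1)$ is monotone nondecreasing, since $\F(\cdot\mid\nu_Y)\geq 0$; this is exactly the monotonicity device the paper invokes a few lines later in the proof of Theorem~\ref{thm:eUOT-Y_to_UOT_Y}, and it spares you the diagonal extraction. Second, the two obstacles you flag are precisely where the real work in \cite{N21} lies: the block average as written has marginals equal to the block-averaged $\beta_i$ with respect to the marginals of $\nu_Y$, not $\beta_i$ themselves, so one either assumes $\beta_i$ absolutely continuous with respect to the reference marginals and appeals to stability of ${\rm OT}$ in its marginals, or uses a more careful construction; the truncation-plus-monotone-convergence step for the unbounded lower semicontinuous cost $H_p$ is likewise standard and correctly identified. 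These are not gaps in your reasoning so much as the actual technical content of the cited result, which you have accurately located.
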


We will now prove that the extended-space entropy-regularised problem does indeed approximate the unbalanced optimal transport problem.
\begin{theorem}\label{thm:eUOT-Y_to_UOT_Y}
For any $\mu_0,\mu_1 \in \mathcal{M}(X)$ the $Y$-space regularised unbalanced optimal transport problem ${\rm UOT}_{Y,\e}(\mu_0,\mu_1)$ introduced in \eqref{eUOTY-P} converges to ${\rm UOT}(\mu_0,\mu_1)$ introduced in \eqref{UOT-P}, that is
\[
\lim_{\e \to 0} {\rm UOT}_{Y,\e}(\mu_0,\mu_1) = {\rm UOT}(\mu_0,\mu_1).
\]
\end{theorem}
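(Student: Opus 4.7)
The plan is to exploit the reformulation \eqref{eUOT_Y_as_eOT} together with the classical convergence result for the balanced entropic optimal transport problem recalled in Proposition~\ref{prop:eOT-conv}, which reduces the theorem to two one-sided inequalities.

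\emph{Lower bound.} Since the regularising KL functional satisfies $\F(\alpha\mid \nu_Y)\geq 0$ for all $\alpha\in\Pc(Y\times Y)$ (by the convexity statement in Proposition~\ref{rel-ent-prop} together with $\F(\nu_Y\mid\nu_Y)=0$), each admissible $\alpha\in \overline{S}^p_{=}(\mu_0,\mu_1)$ gives a bigger contribution to $\mathrm{UOT}_{Y,\e}$ than to $\mathrm{UOT}$. Hence
\[
{\rm UOT}_{Y,\e}(\mu_0,\mu_1)\;\geq\;\inf_{\alpha\in\overline{S}^p_{=}(\mu_0,\mu_1)}(H_p,\alpha)\;=\;{\rm UOT}(\mu_0,\mu_1),
\]
where the last equality is Theorem~\ref{thm:UOT-rescaled}. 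This gives $\liminf_{\e\to 0}{\rm UOT}_{Y,\e}(\mu_0,\mu_1)\geq {\rm UOT}(\mu_0,\mu_1)$ immediately, with no compactness or equicontinuity argument needed.

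\emph{Upper bound.} By Proposition~\ref{prop:alpha_bar} there exists a minimiser $\overline\alpha\in\overline{S}^p_{=}(\mu_0,\mu_1)$ of the rescaled formulation, so that $(H_p,\overline\alpha)={\rm UOT}(\mu_0,\mu_1)$. I would set $\overline\beta_i:=\pi^i_{\#}\overline\alpha\in\overline{S}^p_{=}(\mu_i)$ and observe that $\overline\alpha\in\Gamma(\overline\beta_0,\overline\beta_1)$, so
\[
{\rm OT}(\overline\beta_0,\overline\beta_1)\;\leq\;(H_p,\overline\alpha)\;=\;{\rm UOT}(\mu_0,\mu_1).
\]
On the other hand, Corollary~\ref{thm:UOT_as_OT} gives ${\rm OT}(\overline\beta_0,\overline\beta_1)\geq \inf_{(\beta_0,\beta_1)}{\rm OT}(\beta_0,\beta_1)={\rm UOT}(\mu_0,\mu_1)$, so equality holds. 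In particular ${\rm OT}(\overline\beta_0,\overline\beta_1)<\infty$, and Proposition~\ref{prop:eOT-conv} applies with fixed marginals to yield
\[
\lim_{\e\to 0}{\rm OT}_\e(\overline\beta_0,\overline\beta_1)\;=\;{\rm OT}(\overline\beta_0,\overline\beta_1)\;=\;{\rm UOT}(\mu_0,\mu_1).
\]
Using the variational reformulation \eqref{eUOT_Y_as_eOT}, the pair $(\overline\beta_0,\overline\beta_1)$ is a valid competitor, so ${\rm UOT}_{Y,\e}(\mu_0,\mu_1)\leq {\rm OT}_\e(\overline\beta_0,\overline\beta_1)$ for every $\e>0$, which passes to $\limsup$ to give the desired upper bound. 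Combining both bounds concludes the proof.

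The main obstacle I anticipate is the applicability of Proposition~\ref{prop:eOT-conv} with the fixed marginals $(\overline\beta_0,\overline\beta_1)$ for the chosen reference $\nu_Y\in\Pc(Y\times Y)$: the standard $\Gamma$-convergence proof for balanced entropic OT typically needs a compatibility between $\nu_Y$ and the marginals (e.g. that the block-approximation or a gluing construction produces a recovery sequence with finite KL divergence against $\nu_Y$). Under the compactness hypothesis $X=\Omega\subset\R^n$ flagged in the manuscript and the restriction $\nu_Y\in\Pc(Y\times Y)$, this is standard, but in the $\sigma$-finite extension discussed in Remark~\ref{nu_Y-restriction} one would need to verify that the $W$-weighted entropy still satisfies $\Gamma$-convergence — this is where the proof would need the most care in the more general setting.
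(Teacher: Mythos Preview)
Your proof is correct, and the upper bound argument is essentially identical to the paper's: both fix the optimal $\overline\alpha$ from Proposition~\ref{prop:alpha_bar}, pass to its marginals $\overline\beta_i=\pi^i_{\#}\overline\alpha$, and invoke Proposition~\ref{prop:eOT-conv} together with the reformulation \eqref{eUOT_Y_as_eOT}.

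For the lower bound you take a more direct route than the paper. You use the nonnegativity of $\F(\cdot\mid\nu_Y)$ on $\Pc(Y\times Y)$ from Proposition~\ref{rel-ent-prop} to obtain ${\rm UOT}_{Y,\e}(\mu_0,\mu_1)\geq\inf_{\alpha\in\overline S^p_=(\mu_0,\mu_1)}(H_p,\alpha)={\rm UOT}(\mu_0,\mu_1)$ for every $\e>0$, which gives the $\liminf$ inequality immediately. The paper instead establishes that $\e\mapsto{\rm UOT}_{Y,\e}(\mu_0,\mu_1)$ is monotone by invoking the existence of regularised minimisers $\overline\alpha^\e$ (Proposition~\ref{eUOT-Y-minimiser}) and comparing values at two levels $\e_1<\e_2$; the lower bound is then deduced from this monotonicity (together, implicitly, with the same nonnegativity of $\F$). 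Your argument is more elementary and sidesteps the appeal to existence of minimisers for the regularised problem; the paper's route, on the other hand, delivers the extra piece of information that the convergence is monotone, which may be of independent interest. Your closing caveat about the applicability of Proposition~\ref{prop:eOT-conv} under weaker hypotheses on $\nu_Y$ is well placed and mirrors the paper's own hedging in Remark~\ref{nu_Y-restriction}.
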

\begin{proof}
Proposition~\ref{prop:alpha_bar} ensures existence of $\overline{\alpha} \in \overline{S}^p_{=}(\mu_0,\mu_1)$ such that
\[
{\rm UOT}(\mu_0,\mu_1) = (H_p,\overline{\alpha})= {\rm OT}(\overline{\alpha}_0,\overline{\alpha}_1),
\]
where $\overline{\alpha}_i := \pi^i_{\#}\overline\alpha$ is the $i$th marginal.

By Proposition~\ref{prop:eOT-conv}, it holds that 
\[
\lim_{\e \to 0} {\rm OT}_{\e}(\overline{\alpha}_0,\overline{\alpha}_1) = {\rm OT}(\overline{\alpha}_0,\overline{\alpha}_1) 
\]
which implies that, as $\e \to 0$,
\[
{\rm UOT}_{Y,\e}(\mu_0,\mu_1) \leq {\rm OT}_{\e}(\overline{\alpha}_0,\overline{\alpha}_1) \to {\rm OT}(\overline{\alpha}_0,\overline{\alpha}_1) = {\rm UOT}(\mu_0,\mu_1)
\]
and hence
\[
\lim_{\e \to 0} {\rm UOT}_{Y,\e}(\mu_0,\mu_1) \leq {\rm UOT}(\mu_0,\mu_1).
\]
To conclude the result it would suffice to show that as $\e$ decreases, the values ${\rm UOT}_{Y,\e}(\mu_0,\mu_1)$ are monotone decreasing, as this would imply 
\[
\lim_{\e \to 0} {\rm UOT}_{Y,\e}(\mu_0,\mu_1) \geq {\rm UOT}(\mu_0,\mu_1),
\]
as discussed in \cite[Section~5.1]{N21}.

To show it we rely on Proposition~\ref{eUOT-Y-minimiser}, which established that $\exists\; \overline{\alpha}^{\e} \in \overline{S}^p_{=}(\mu_0,\mu_1)$ such that 
\[
{\rm UOT}_{Y,\e}(\mu_0,\mu_1) = (H_p,\overline{\alpha}^\e) + \e \F(\overline{\alpha}^\e \mid \nu_Y)= {\rm OT}_{\e}(\overline{\alpha}^\e_0,\overline{\alpha}^\e_1).
\]
Take $\e_2 > \e_1 > 0$ and denote by $\overline{\alpha}^i = \overline{\alpha}^{\e_i}$. We then have 
\begin{align*}
{\rm UOT}_{Y,\e_1}(\mu_0,\mu_1) &\leq (H_p, \overline{\alpha}^2) + \e_1 \F(\overline{\alpha}^2 \mid \nu_Y) = {\rm UOT}_{Y,\e_2}(\mu_0,\mu_1) + (\e_1 - \e_2)\F(\overline{\alpha}^2 \mid \nu_Y)\\
&\leq {\rm UOT}_{Y,\e_2}(\mu_0,\mu_1), 
\end{align*}
where the last equality follows from $\e_2 > \e_1$ and $\F(Q \mid \nu_Y) \geq 0$ (c.f. Proposition~\ref{rel-ent-prop}).
\end{proof}
\begin{remark}
Reflecting the fact that the homogeneous marginal mapping \linebreak ${\h_i^p\,\colon \M_2(Y \times Y) \to \M(X)}$ is not injective, instead of a-priori fixing a reference measure $\nu_X \in \Pc(Y \times Y)$, it is also feasible to introduce the entropy regularisation as  
\begin{equation}
    \widehat{\rm UOT}_{Y,\e}(\mu_0,\mu_1) = \inf_{(\beta_0,\beta_1)}\{\widehat{\rm OT}_{\e}(\beta_0,\beta_1)\,\mid\, \beta_i \in S^p_{=}(\mu_i)\},
\end{equation}
where
\begin{equation}
    \widehat{\rm OT}_{\e}(\beta_0,\beta_1) := \inf_{\alpha \in \Gamma(\beta_0,\beta_1)} \left\{ (H_p,\alpha) + \e \F(\alpha \mid \alpha_0 \otimes \alpha_1)\right\},
\end{equation}
with equivalent formulation given by
\[
\widehat{\rm UOT}_{Y,\e}(\mu_0,\mu_1) = \inf_{\alpha \in \overline{S}^p_{=}(\mu_0,\mu_1)}\big\{ (H_p,\alpha) + \e \F(\alpha \mid \alpha_0 \otimes \alpha_1)\big\}.
\]
The joint lower semicontinuity of $(\alpha,\nu_Y) \mapsto \F(\alpha,\nu_Y)$, see Proposition~\ref{rel-ent-prop}, ensures that a convergence result akin to Theorem~\ref{thm:eUOT-Y_to_UOT_Y} still holds in this case. {\color{blue}\footnotesize [This point is to be expanded upon in the next version of the manuscript.]} 
\end{remark}

{\color{blue}\footnotesize [The theory of the extended space regularisation is to be significantly expanded upon in the next version of the manuscript, based on ideas developed in Section~\ref{sec:eUOT-unify}.]}

\subsubsection{Example: the extended space based lifting of the entropic regularisation of the balanced optimal transport}\label{sec:eOT-Y}
Starting from \eqref{eqn:OT-H-ext}, we can consider {\color{blue} \footnotesize [This section will be added in the next version of the manuscript and relies on ideas developed in Section~\ref{sec:eUOT-unify}.]}

\section{Discussion on deriving a unified extended framework for comparing $X$ space regularisation and $Y$ space regularisation}\label{sec:eUOT-unify}
In this section we report on some preliminary observations related to deriving a common framework in which both the ${\rm UOT}_{X,\e}$ problem, discussed in Section~\ref{sec:eUOT_X}, and the ${\rm UOT}_{Y,\e}$ problem, discussed in Section ~\ref{sec:eUOT_Y}, can be directly compared. This is particularly important because we view deriving such a direct correspondence, firstly, as key to proving convergence of values of 
\[
{\rm UOT}_{X,\e}(\mu_0,\mu_1) \to {\rm UOT}(\mu_0,\mu_1)\,\text{ as }\,\e \to 0,
\]
and, secondly, as a way of developing a fuller theory of entropic regularisations of unbalanced optimal transport problems. Ultimately we also hope to tie the static theory to the different possible dynamic formulations of entropy-regularised unbalanced optimal transport problems (see Section~\ref{sec:eUOT-dynamic} for a further dicsussion on this).

The basic idea behind deriving a common extended framework stems from the dimensional discrepancy between extended descriptions: in the ${\rm UOT}_{X,\e}$ problem, as discussed in Section~\ref{sec:eUOT-ExtH}, the extended formulation seeks minimisers among a subset of measures in 
\[
\M(Y \times Y \times \R_+) \equiv \M(X^2 \times \R_+^3),
\]
whereas in the ${\rm UOT}_{Y,\e}$ problem, as discussed in Section~\ref{sec:eUOT_Y}, the minimisers are sought among measures in 
\[
\M(Y \times Y) \equiv \M(X^2 \times R_+^2).
\]

In our view, a promising approach to deriving a unified framework, rests on the idea of subsequent liftings, so that we can reformulate both problems as two minimisation problems over a common space of measures $\M(X^2 \times \R_+^4)$,. This endeavour proves to be surprisingly tricky and we set our strategy first by showing how to lift the unbalanced optimal transport problem to a minimisation problem on $\M(X^2 \times \R_+^3)$, followed by an outline of the strategy for original space and extended space regularisations. {\color{blue}[\footnotesize This section is to be substantially expanded in the next version of the manuscript or delayed to a follow-up paper.]} 

\subsection{Further lifting of the ${\rm UOT}$ problem}\label{sec:UOT-H-ext2}
Starting from Theorem~\ref{thm:UOT-H-ext}, we observe that
\begin{equation}\label{eqn:UOT-H-ext2_1}
{\rm UOT}(\mu_0,\mu_1) = \inf_{\alpha \in S^p_=(\mu_0,\mu_1)} (H_p,\alpha) = \inf_{\alpha \in M(Y \times Y)} \sum_i \overline\F(h_i^p\alpha \mid \mu_i) + (H_p,\alpha),
\end{equation}
where we recall that the definition of $\overline{\F}$ was introduced in Section~\ref{sec:OT}. This formulation can be seen as a primal formulation of a balanced variant of what we term a \emph{second order unbalanced optimal transport problem}. Using the steps detailed in Section~\ref{sec:UOT-equiv} and in Section~\ref{sec:eUOT_X}, we can, starting with \eqref{eqn:UOT-H-ext2_1}, derive the reverse formulation, the dual formulation, the homogeneous formulation and the extended space formulation.

The reverse formulation can be readily shown to be given by  
\[
{\rm UOT}(\mu_0,\mu_1) =  \inf_{\alpha \in M(Y \times Y)} \int_{Y \times Y} \left(\sum_i s_i^p \overline R(\rho_i(x_i)) + H_p\right)d\alpha,
\]
where, we employ the Lebesgue decomposition between $\h_i^p\alpha$ and $\mu_i$, that is 
\[
\h_i^p\alpha = \sigma_i \mu_i + (\h_i^p\alpha)^{\perp}, \quad \mu_i = \rho_i \h_i^p\alpha + \mu_i^\perp,
\]
and in the present case it is clear that the singular parts are effectively null. Furthermore, in the present case, clearly, $\overline{R}(s) = s\overline{F}\left(\frac{1}{s}\right) = \overline{F}(s)$.

Likewise, relying on Theorem~\ref{thm:FRT}, and noting that 
\[
(\overline F)^*(\phi) = (\overline R)^*(\phi) = \phi,
\]
it is not hard to show that the dual formulation is given by 
\[
{\rm UOT}(\mu_0,\mu_1) = \sup_{\phi \in \Phi_Y} \sum_i \mu_i(\phi_i),
\]
where, crucially,
\[
\Phi_Y := \{ \phi = (\phi_0,\phi_1) \in C_b(X) \times C_b(X) \mid (s_0^p \phi_0) \oplus (s_1^p \phi_1) \leq H\}. 
\]
We can thus introduce the \emph{second order marginal perspective function} 
\[
\tilde H \,\colon\, X^2 \times \R_+^3 \to \R,  
\]
given by, suppresing the dependence $\tilde H \equiv \tilde H (x_0,x_1,s_0,s_1,w_0,w_1)$, 
\begin{align*}
\tilde H &:= \inf_{t>0} \Bigg\{t\Bigg(s_0^p\overline{R}\left(\frac{w_0}{t}\right) + s_1^p\overline{R}\left(\frac{w_1}{t}\right) + H(x_0,s_0,x_1,s_1)\Bigg)\Bigg\}\\
&= \inf_{t>0} \Bigg\{ w_0s_0^p \overline{F}\left(\frac{t}{w_0}\right) + w_1s_1^p \overline{F}\left(\frac{t}{w_1}\right) + tH(x_0,s_0,x_1,s_1) \Bigg\}\\
&= \sup_{(\phi_0,\phi_1) \in \R_+^2}\Bigg\{ -s_0^pw_0(\overline F)^*(-\phi_0) - s_1^pw_1 (\overline F)^*(-\phi_1)\, \Big|\, s_0^p\phi_0 + s_1^p\phi_1 \leq H(x_0,s_0,x_1,s_1)  \Bigg\},
\end{align*}
with the latter two equalities following from obvious adjustment of arguments presented throughout this paper, in Section~\ref{sec:UOT-equiv} and in Section~\ref{sec:eUOT_X}.

As a result, it can be shown that the homogenous formulation is 
\[
{\rm UOT}(\mu_0,\mu_1) = \inf_{\alpha \in M(Y \times Y)} \int_{Y \times Y} \tilde H(x_0,x_1,s_0,s_1,\rho_0(x_0),\rho_1(x_1)) d\alpha.
\]

As in Section~\ref{sec:OT}, the entropy functionals $\overline F$ ensure that 
\[
\tilde H(x_0,x_1,s_0,s_1,w_0,w_1) = \begin{cases}
    aH(x_0,s_0,x_1,s_1),\quad&\text{when } w_0 = w_1 = a,\\
    +\infty,\quad&\text{otherwise.}
\end{cases}
\]
In particular, a necessary condition on the homogeneous marginals of $\alpha$ for the integral to be finite is that  $a \h_i^p\alpha = \mu_i$ for some constant $a > 0$.

The second order extended formulation is given by
\[
{\rm UOT}(\mu_0,\mu_1) = \inf_{\eta \in \mathcal S^p_=(\mu_0,\mu_1)} \int_{X^2\times \R_+^4} \tilde H(x_0,x_1,s_0,s_1,w_0,w_1) d\eta,
\]
where 
\[
\mathcal S^p_{=}(\mu_0,\mu_1) := \Big\{ \eta \in M(X^2 \times \R_+^4) \mid \pi^{x_i}_{\#}(s_i^pw_i \eta) = \mu_i \Big\}.
\]
In particular, for any $\alpha \in M(Y \times Y)$ such that $\mu_i = \rho_i \h_i^p\alpha$ (here the setup effectively only permits constant density as discussed above), we have that 
\[
(x_0,s_0,\rho_0(x_0),x_0,s_0,\rho_1(x_1))_{\#}\alpha \in \mathcal S^p_{=}(\mu_0,\mu_1).
\]

Finally, as was also the case in the example of lifting the balanced optimal transport problem described in Section~\ref{sec:OT},
a necessary condition on the support of $\eta$ for the integral to be finite is that 
\[
\eta((X^2 \times \R_+^4)\setminus \Omega) = 0, \quad \text{where } \Omega = \{(x_0,x_1,s_0,s_1,w_0,w_1) \in X^2 \times \R_+^4 \mid w_0 = w_1\}.
\]
In other words, $\eta$ has to be of the form
\[
\eta = (x_0,x_1,s_0,s_1,w,w)_{\#}\xi, \quad \xi \in \M(X^2 \times \R_+^3),\; (x_0,x_1,s_0,s_1,w) \in X^2\times \R_+^3.
\]
Since $\eta \in \mathcal{S}^p_{=}(\mu_0,\mu_1)$, the corresponding $\xi$ has to belong to
\[
\widetilde{\mathcal S}^p_{=}(\mu_0,\mu_1) := \left\{ \xi \in \M(X^2\times \R^3_+)\, \Big|\, \pi^{x_i}_{\#}(s^pw_i\xi) = \mu_i \right\},
\]
where, for any measurable function $f\,\colon\,X \to \R$,
\[
\int_X f(x) d\pi^{x_i}_{\#}(s^pw_i\xi) = \int_{X^2 \times \R_+^3} s^p w_if(x_i)d\xi.
\]
It readily follows that 
\[
{\rm UOT}(\mu_0,\mu_1) = \inf_{\xi \in \widetilde{\mathcal S}^p_=(\mu_0,\mu_1)}\int_{X^2 \times \R^3_+} w\, H(x_0,s_0,x_1,s_1)d\xi,
\]
which is a minimisation problem over a subset of the space $\M(X^2 \times \R_+^3)$, which is what we set out to outline. 
\begin{remark}\label{rem:hoUOT}
    A full \emph{second order unbalanced optimal transport problem}, to be studied in the future, concerns relaxing the homogenenous marginal constraints, that is replacing $\overline \F$ in \eqref{eqn:UOT-H-ext2_1} with a general entropy functional $\F$, e.g. a one which uses the KL divergence as the entropy function.  
\end{remark}
\subsection{Lifting of the ${\rm UOT}_{Y,\e}$ problem to $\M(X^2 \times \R_+^4)$} $\,$ {\color{blue} \footnotesize [This section is to be added in the next version of the manuscript. The basic observation is that the construction outlined above works, with obvious adjustments, for ${\rm UOT}_{Y,\e}$ too.] }

\subsection{Lifting of the ${\rm UOT}_{X,\e}$ problem to $\M(X^2 \times \R_+^4)$} $\,$ To conduct a similar lifting approach in the case of the ${\rm UOT}_{X,\e}$ problem, we have to proceed in two steps. We first do the first order lifting, as described in Section~\ref{sec:eUOT-ExtH}, which lets us formulate the problem as a minimisation over a subset of the space $\M(X^2 \times \R_+^3)$, namely 
\begin{align}
{\rm UOT}_{X,\e}(\mu_0,\mu_1) &= \inf_{\eta \in S^p_=(\mu_0,\mu_1,\nu_X)} (H^{\e}_p,\alpha) \nonumber\\
&= \inf_{\eta \in M(X^2 \times \R_+^3)} \sum_i \overline\F(\h_i^p\eta \mid \mu_i) + \overline\F(\Hm^p\eta \mid \nu_X) +  (H^\e_p,\alpha), \label{eqn:eUOT-X-H-ext2_2}
\end{align}

Using the steps detailed in Section~\ref{sec:UOT-H-ext2}, we can then easily derive an extended formulation where the problem is posed as a minimisation over a subset of measures in $\M(X^2 \times \R_+^6)$, but the sharpness of the entropy functionals $\overline \F$ ensures that we can reduce the problem to a minimisation over a subset of $\M(X^2 \times \R_+^4)$. {\color{blue} \footnotesize [This section is to be considerably expanded in the next version of the manuscript.]}

\section{Discussion on the dynamic formulation}\label{sec:eUOT-dynamic}
So far we have exclusively focused on the static formulation of unbalanced optimal transport problems and the two fundamentally different approaches in which they can be entropy-regularised. In the subsequent heuristic discussion, we will attempt to elucidate how a similar distinction can be made starting from the dynamic formulation \cite{BB00}, \cite[Section~8]{LMS18} and how in this case it appears natural to view the extended space regularisation as superior, given the overall setup of unbalanced optimal transportation problems.

We first very briefly recall the dynamic formulation of ${\rm UOT}(\mu_0,\mu_1)$, followed by a discussion on the possible entropic regularisations.
\subsection{Dynamic formulation of UOT}
The unbalanced optimal transport problem ${\rm UOT}(\mu_0,\mu_1)$ introduced in Section~\ref{sec:UOT-P}, admits another reformulation, typically referred to as \emph{dynamic}. 
\begin{prop}[\protect{\cite[Theorem~8.18, Theorem~8.19]{LMS18}}]\label{prop:eUOT-dynamic}
For a specific choice of cost function and the marginal entropy functions, the ${\rm UOT}$ problem in \eqref{UOT-P} admits a dynamic formulation, namely \begin{equation}\label{UOT-dynamic}
\mathrm{UOT}(\mu_0,\mu_1):= \inf_{(\rho, v, r) \in {\rm Adm}(\mu_0,\mu_1)}\left\{\int \int_0^1 (|v|^2 + |r|^2) \rho\,dxdt  \right\}
\end{equation}
where, for $(\rho,v,r)$ belonging to appropriately defined function spaces (in particular, $v$ is a vector field),
\[
{\rm Adm}(\mu_0,\mu_1) :=\{ \rho, v, r \,\mid\,  \partial_t \rho + \div(\rho v) = \rho r,\quad  \rho|_{t=i} = \mu_i\}.
\]
In particular, it follows that in the triplets $(\rho,v,r)$ minimising \eqref{UOT-dynamic}, the vector field $v$ is the gradient of the scalar field $r$, that is $v = \nabla r$.
\end{prop}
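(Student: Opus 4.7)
The plan is to specialise to the Hellinger--Kantorovich setting, where the entropy function $F$ is the KL divergence \eqref{KL} and the cost $c$ is given by \eqref{HK-cost-fct}, and then to exploit the extended space reformulation from Theorem~\ref{thm:UOT-H-ext} together with Corollary~\ref{thm:UOT_as_OT}. The key observation is that in this regime the induced marginal perspective cost function \eqref{H-KL} becomes
\[
H(x_0,s_0,x_1,s_1) = s_0 + s_1 - 2\sqrt{s_0 s_1}\cos(\d(x_0,x_1)),
\]
valid when $\d(x_0,x_1) < \pi/2$ (and equal to $s_0 + s_1$ otherwise via the cut-off). This is precisely the squared distance on the metric cone $\mathfrak{C}(X) := (X \times \R_+)/(X \times \{0\})$ equipped with the spherical-suspension distance induced by $\d/2$. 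Consequently, the reformulation in Corollary~\ref{thm:UOT_as_OT} identifies ${\rm UOT}(\mu_0,\mu_1)$ with a standard quadratic-cost optimal transport problem on $\mathfrak{C}(X)$ between lifts $\beta_0,\beta_1 \in \overline S^p_=(\mu_0) \times \overline S^p_=(\mu_1)$, jointly minimised over the admissible lifts.

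The second step is to apply the classical Benamou--Brenier dynamic formulation of quadratic optimal transport on the cone. For each choice of lifts $\beta_i$, one can write $W_2^2(\beta_0,\beta_1)$ as the infimum of $\int_0^1 \int_{\mathfrak C(X)} |V|_{\mathfrak C}^2\, d\beta_t\, dt$ over pairs $(\beta_t, V)$ satisfying the continuity equation $\partial_t \beta_t + \div_{\mathfrak C}(\beta_t V) = 0$ on the cone with endpoints $\beta_0,\beta_1$. Now I would decompose the cone vector field $V$ into its horizontal (along $X$) and radial (along $\R_+$) components, $V = (v, w)$. Writing $\rho_t := \h^p \beta_t \in \M(X)$ for the projection, a direct computation using the definition of $\h^p$ and the chain rule for pushforwards converts the conservative continuity equation on $\mathfrak C(X)$ into the continuity equation \emph{with source} on $X$, namely $\partial_t \rho_t + \div(\rho_t \tilde v) = \rho_t \tilde r$ for suitable projected fields $\tilde v, \tilde r$, while the kinetic action on the cone reduces, after integrating out the radial coordinate, to $\int\int_0^1 (|\tilde v|^2 + |\tilde r|^2) \rho_t\, dx\, dt$. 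This identifies the infimum on the cone with the right-hand side of \eqref{UOT-dynamic}.

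To match the two infima exactly, one must show both directions: given a minimising path $(\rho,v,r)$ on $X$, lift it to a cone path whose action coincides (choosing $\beta_t$ with a Dirac fibre over the radial direction or similar optimal lift); conversely, every cone minimiser projects as above. The technical subtlety, which I expect to be the main obstacle, is handling lifts that concentrate or vanish mass along the radial direction, in particular avoiding the apex $X \times \{0\}$, since this is where the cone metric fails to be smooth and where pushforwards may lose regularity — this is precisely the content of the rescaling reduction in Proposition~\ref{prop:rescale} and the narrow compactness argument used in Proposition~\ref{eUOT-Y-minimiser}. Finally, the assertion $v = \nabla r$ for minimisers follows from a standard first-order optimality argument: freezing $\rho_t$ and varying the pair $(v,r)$ subject to the linear constraint $\partial_t \rho + \div(\rho v) - \rho r = 0$, Lagrange multipliers produce a potential $\varphi$ with $v = \nabla \varphi$ and $r = \varphi$, which is the projection to $X$ of the fact that the optimal cone velocity $V$ is the gradient of the Kantorovich potential on $\mathfrak C(X)$.
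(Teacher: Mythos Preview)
The paper does not supply its own proof of this proposition: it is stated with an explicit citation to \cite[Theorem~8.18, Theorem~8.19]{LMS18} and no proof environment follows. There is therefore nothing in the paper to compare your argument against.

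That said, your sketch is broadly the strategy of \cite{LMS18}: recognise $H$ in the Hellinger--Kantorovich case as the squared cone distance, recast ${\rm UOT}$ as Wasserstein transport on the cone via Corollary~\ref{thm:UOT_as_OT}, run Benamou--Brenier there, and project the cone continuity equation down to $X$ to obtain the equation with source term $\rho r$. One small imprecision: the cone metric is the spherical suspension over $(X,\d\wedge\pi)$, not $\d/2$; the formula you wrote for $H$ is nonetheless correct. The genuinely delicate steps you flag --- handling the apex, matching the two infima exactly via optimal lifts, and the regularity needed to justify the pushforward computation --- are indeed where the work lies in \cite{LMS18}, and your outline does not resolve them, but as a roadmap it is faithful to the cited source.
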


\subsection{Dynamic formulations of entropy regularised unbalanced optimal transport problems}
The key observation that led us to this work comes from the reformulation of the unbalanced optimal transport problem an optimal transport problem on the extended space, see Corollary~\ref{thm:UOT_as_OT} -- since in this formulation we allow transport across the radial component, which in the original problem corresponds to the creation/annihilation of mass, it would be a natural requirement for the entropic regularisation to also handle both transport and creation/annihilation of mass (reaction). In the context of the dynamical formulation, this would correspond to the entropy-regularised continuity equation to contain entropic reaction terms in addition to diffusion terms.

This, however, is in contrast with known dynamic formulations of entropy-regularised unbalanced optimal transport, see e.g. \cite{BL21}, which is
\begin{equation}\label{eUOT-dynamic1}
\mathrm{UOT}^{\rm D}_{X,\e}(\mu_0,\mu_1):= \inf_{(\rho, v, r) \in {\rm Adm}_{X,\e}(\mu_0,\mu_1)}\left\{\int \int_0^1 (|v|^2 + |r|^2) \rho\,dxdt  \right\}
\end{equation}
where, for $(\rho,v,r)$ belonging to appropriate functions spaces, 
\[
{\rm Adm}_{X,\e}(\mu_0,\mu_1) :=\Big\{ \rho, v, r  \,\mid\,  \partial_t \rho + \div(\rho v) = \rho r + \frac{\e}{2}\Delta \rho,\quad  \rho|_{t=i} = \mu_i\Big\}.
\]
In this formulation all the $\e$-terms are associated with diffusion and not reaction and this formulation is reminiscent of the static entropy regularisation on the original space, ${\rm UOT}_{X,\e}(\mu_0,\mu_1)$ discussed in Section~\ref{sec:eUOT_X}, hence the subscript $X$. 

We will employ an argument similar to the one discussed in \cite{LLW19} to develop higher-order (Fisher information) regularisation for Wasserstein gradient flows, to outline how this way of regularising can be can viewed as leading to first order errors. Starting form the continuity equation associated with ${\rm Adm}_{X,\e}(\mu_0,\mu_1)$,
\[
\partial_t \rho + \div(\rho v) = \rho r + \frac{\e}{2}\Delta \rho
\]
and exploiting that $\Delta \rho = \div (\rho \nabla \log \rho)$, we define
\begin{equation}\label{eqn:rho_v_r_sub}
\tilde v := v - \frac{\e}{2} \nabla \log \rho, \quad \tilde r = r - \e s,
\end{equation}
where the function $s$ is to be determined. The continuity equation now becomes
\begin{equation}\label{new-ce}
\partial_t \rho + \div(\rho \tilde v) = \rho (\tilde r + \e s).
\end{equation}
It follows that, for any triplet $(\rho,v,r) \in {\rm Adm}_{X,\e}(\mu_0,\mu_1)$ and with substitution \eqref{eqn:rho_v_r_sub}, the integral in \eqref{eUOT-dynamic1} is given by 
\begin{align}
\int_0^1 \int_X \left(|v|^2 + |r|^2\right){\rm d}\rho {\rm d}t &=\int_0^1 \int_X \left(|\tilde v|^2 + \frac{\e^2}{4}|\nabla \log \rho|^2 + \e \tilde v \cdot \nabla \log \rho\right){\rm d}\rho {\rm d}t \label{a}\\ &+ \int_0^1 \int_X \left(|\tilde r|^2 + \e^2|s|^2  + 2\e \tilde r s\right){\rm d}\rho {\rm d}t. \nonumber
\end{align}
As in \cite{LLW19}, to handle the term $\int_0^1\int_X \e \tilde v \cdot \nabla \log \rho{\rm d}\rho {\rm d}t$, we define ${H[\rho]:= \int_X \rho \log \rho}dx$ and observe that, since $\rho$ satisfies \eqref{new-ce},
\begin{align*}
\frac{{\rm d}}{{\rm d}t}H[\rho] &= \int (\log \rho +1) \partial_t \rho dx = \int (\log \rho +1)(-\div(\rho \tilde v) + \rho(\tilde r + \e s))dx\\
&= \int (\nabla \log \rho \cdot \tilde v)\rho dx + \int (\log \rho +1)(\tilde r + \e s)\rho dx,
\end{align*}
where we have used \eqref{new-ce} in the second equality and integration by parts in the third. It thus follows that 
\begin{align*}
\e \int_0^1\int_X \tilde v \cdot \nabla \log \rho{\rm d}\rho {\rm d}t &= \e \int_0^1 \frac{{\rm d}}{{\rm d}t}H[\rho] {\rm d}t \\ &- \e\int_0^1 \int_X (\log \rho +1) \tilde r \rho - \e^2 \int_0^1 \int_X(\log \rho +1) s \rho.
\end{align*}

Recalling that the function $s$ was left to be determined, it appears natural to set 
\[
s = \frac12 (\log \rho +1),
\]
as this leads to a cancellation of some of the linear terms in $\e$, namely \eqref{a} becomes
\begin{align*}
\int_0^1 \int_X(|v|^2 + |r|^2){\rm d}\rho {\rm d}t = \int_0^1 \int_X \Bigg(|&\tilde v|^2 + |\tilde r|^2 + \frac{\e^2}{4} \left(|\nabla \log \rho|^2 +  |\log \rho + 1|^2  \right)\Bigg){\rm d}\rho {\rm d}t \\
&+ \e\left(H[\mu_1] - H[\mu_0]\right)
\end{align*}

To us this is a very clear indication that, reversing the substitution \eqref{eqn:rho_v_r_sub} in \eqref{new-ce}, a more accurate continuity equation is 
\begin{equation}\label{CE-best}
\partial_t \rho + \div(\rho v) = \rho r + \frac{\e}{2}\div(\rho\nabla \log \rho) - \frac{\e}{2}\rho(\log \rho +1).
\end{equation}

This should not be surprising in the light of the second part of Proposition~\ref{prop:eUOT-dynamic}, establishing that in the unregularised formulation, the minimising triplet satisfies ${v = \nabla r}$. This is  preserved when regularising with \eqref{CE-best}, as when we set 
\[
\tilde v = v - \frac{\e}{2} \nabla \log \rho, \quad \tilde r = r - \frac{\e}{2}(\log \rho +1),
\]
we obtain the standard continuity equation 
\[
\partial_t \rho + \div(\rho \tilde v) = \rho \tilde r
\]
and importantly then $\tilde v = \nabla \tilde \xi$ where $\tilde \xi = \xi - \frac{\e}{2}(\log \rho + C)$ for any constant $C$ and likewise 
$\tilde r = \tilde \xi$ for $C=1$.

To formalise the above discussion, we introduce 
\begin{equation}\label{eUOT-dynamic-Y}
\mathrm{UOT}^{\rm D}_{Y,\e}(\mu_0,\mu_1):= \inf_{(\rho, v, r) \in {\rm Adm}_{Y,\e}(\mu_0,\mu_1)}\left\{\int \int_0^1 (|v|^2 + |r|^2) \rho\,dxdt  \right\}
\end{equation}
where, for $(\rho,v,r)$ belonging to appropriate functions spaces, 
\[
{\rm Adm}_{Y,\e}(\mu_0,\mu_1) :=\{ \rho, v, r  \,\mid\,  \partial_t \rho + \div(\rho v) - \frac{\e}{2}(\rho\nabla \log \rho) = \rho r - \frac{\e}{2}(\log \rho +1) ,\; \rho|_{t=i} = \mu_i\}.
\]

We employ the subscript $Y$ because now the entropic terms enter both in the diffusion and reaction terms, reminiscent of the extended space static regularisation discussed in Section~\ref{sec:eUOT_Y}.

From the above discussion we conjecture that the ${\rm UOT}_{Y,\e}(\mu_0,\mu_1)$ is a higher order approximation scheme to ${\rm UOT}(\mu_0,\mu_1)$, by which we mean that the convergence, as $\e \to 0$, of 
\[
{\rm UOT}^{\rm D}_{Y,\e}(\mu_0,\mu_1) \to {\rm UOT}(\mu_0,\mu_1)
\]
is asymptotically more rapid than of
\[
{\rm UOT}^{\rm D}_{X,\e}(\mu_0,\mu_1) \to {\rm UOT}(\mu_0,\mu_1).
\]
It is our longer term goal to establish such results and also derive direct passages between corresponding static and dynamic formulations of entropy-regularised unbalanced optimal transport problems. For the balanced case, this is done in the Appendix~\ref{app1}.
\section{Outlook}
The work presented in this manuscript develops several aspects of a theory of entropy regularisation of unbalanced transport problems and importantly opens up many avenues of further theoretical work, which we will now briefly discuss. 

\subsection*{Higher order lifting theory} As hinted at in Remark~\ref{rem:hoUOT}, we can plausiably imagine formulating and studying \emph{higher order unbalanced optimal transport problems} of the form
\[
{\rm HUOT}(\mu_0,\mu_1) = \inf_{\alpha \in M(Y \times Y)} \sum_i \F(h_i^p\alpha \mid \mu_i) + (H_p,\alpha),
\]
where we relax the homogeneous marginal constraint and allow for $\h_i^p\alpha$ to deviate from $\mu_i$. This can also be done for its entropy regularisations and one particular feasible scenario in which this can be useful concerns establishing convergence proofs for ${\rm UOT}_{X,\e}(\mu_0,\mu_1)$ -- perhaps after a higher order lifting we have to relax the constraints, as otherwise the problem is not well-posed? 

More broadly, should such a second order problem (and/or its entropic counterpart) prove feasible and interesting, one can also iterate this procedure and look at $n$th order problems too. 

We are not aware of any physical justification for posing such higher order problems, but perhaps a link can be established in relation to dynamic Schrödinger problems, with the entropy minimisation taken with respect to some stochastic process, in the light of the recent interesting work on branching Brownian motion \cite{BL21}.

\subsection*{Passages between different regularisations} An obvious next step to develop a full theory is to be able to derive explicit connections, either in terms formulae or asymptotic results, between static and dynamic formulations of original- and extended-space regularisations. In particular, to achieve the passage between static and dynamic formulation, we hope to leverage the rich literature on such conversions in the balanced case (\cite{GLR17,GT19} and see Appendix~\ref{app1}).

\subsection*{Convergence and convergence rate proofs} We aim to establish the convergence result
\[
{\rm UOT}_{X,\e}(\mu_0,\mu_1) \to {\rm UOT}(\mu_0,\mu_1), \text{ as } \e \to 0.
\]
We hope that the higher order lifting strategy will ensure that we can leverage the corresponding result for ${\rm UOT}_{Y,\e}(\mu_0,\mu_1)$ to achieve that. 

Based on the discussion in Section~\ref{sec:eUOT-dynamic}, it is also our hope to establish some results in relation to the convergence rate in $\e$ of the two types regularisations.

\subsection*{Novel numerical algorithms}
As already noted, authors in \cite{STVvR23} propose numerical algorithms based on the idea of extended space regularisations and e.g. leveraging the generalized iterative scaling \cite{DR72}. We hope that the current effort on putting the theory of entropic regularisations of unbalanced optimal transport on a more rigorous mathematical footing will result in further developments, e.g. leveraging the ideas of higher order liftings. We also would like to point that already in the balanced case, as discussed in Section~\ref{sec:OT} and as will be detailed in Section~\ref{sec:eOT-Y}, one can think numerical algorithms approximating the balanced optimal transport problems on the extended space. 
\appendix 
\section{Passage between static and dynamic  formulations of entropy regularised balanced optimal transport}\label{app1}

The original space based entropy regularisations of optimal transport problems (see Section~\ref{sec:eOT-X})  have been formulated in various closely related ways in the literature, using different notations. Let us introduce them now using one unified notation. We set $X = \R^d$ and suppress the $X$ subscript to make the notation marginally less cumbersome. We consider three static formulations
\begin{align}
{\rm OT}_{\e,1}(\mu,\nu) &:= \inf_{\gamma \in \Pi(\mu,\nu)}\{ (c,\gamma) + \e H(\gamma) \}\\
{\rm OT}_{\e,2}(\mu,\nu) &:= \inf_{\gamma \in \Pi(\mu,\nu)}\{ (c,\gamma) + \e H(\gamma\mid \mu \otimes \nu ) \}\\
{\rm OT}_{\e,3}(\mu,\nu) &:= \inf_{\gamma \in \Pi(\mu,\nu)}\{ \e H(\gamma \mid  K ) \}
\end{align}
where $H(\cdot \mid \cdot)$ is the usual definition of relative entropy and $H(\cdot) = H(\cdot\mid \mathcal{L})$ is the relative entropy with respect the the Lebesgue measure, the pairing is defined as 
\[
(c,\gamma) := \int c(x,y)d\gamma(x,y),\quad c(x,y) = |x-y|^2,
\]
and the measure $K$ corresponds to the heat kernel evaluated at time $t=\e / 2$ and is defined as
\begin{align*}
dK(x,y) &= (2\pi \e)^{-d/2} \exp\left(\frac{-c(x,y)}{2\e}\right)dxdy.
\end{align*}

The form $W_{\e,1}$ is as in \cite{CDPS17}, $W_{\e,2}$ is as in \cite{N21} and also \cite{CRLVP20} (but replacing $\e$ with with $\lambda = \e / 2$ and this prefactor will turn out crucial when reconciling all these formulations). The form $W_{\e,3}$ is used in \cite{GT19} to establish the explicit link between static and dynamic formulations. 

Direct calculations reveal that
\begin{align*}
W_{\e,2}^2(\mu,\nu) &= W_{\e,1}^2(\mu,\nu) - \e\left( H(\mu) + H(\nu)\right)\\
W_{\e,3}^2(\mu,\nu) &= \frac12 W_{2\e,1}^2(\mu,\nu) + \frac{d\e}{2} \log(2 \pi \e),
\end{align*}
so we have a clear passage between all variants (note the change from $\e$ to $2\e$ in the subscript in the second equality).

As far as dynamic formulations are concerned, two variants are typically used, namely
\begin{alignat*}{2}
\widetilde{W}_{\e,a}^2(\mu,\nu) &:= \inf_{\rho,v}\Big\{ \int_0^1 &&\int |v|^2 d\rho(x)dt\mid \partial_t \rho + {\rm div}(\rho v) = \frac{\e}{2} \Delta \rho,\; \rho_0 = \mu,\; \rho_1 = \nu\Big\}\\
\widetilde{W}_{\e,b}^2(\mu,\nu) &:= \inf_{\rho,v}\Big\{ \int_0^1 &&\int \Big(|v|^2+\frac{\e^2}{4}|\nabla \log \rho|^2\Big) d\rho(x)dt\\
&\, &&\partial_t \rho + {\rm div}(\rho v) = 0,\; \rho_0 = \mu,\; \rho_1 = \nu\Big\}
\end{alignat*}
and a direct calculation as in \cite{LLW19} reveals
\[
\widetilde{W}_{\e,a}^2(\mu,\nu) = \widetilde{W}_{\e,b}^2(\mu,\nu) + \e\left(H(\nu) - H(\mu)\right).
\]

It follows from \cite[Proposition 3.6]{GT19} that
\begin{align*}
W_{\e,3}^2(\mu,\nu) &= \frac12 \widetilde{W}_{\e,b}^2(\mu,\nu) + \frac{\e}{2}\left(H(\mu) + H(\nu)\right)\\
&= \frac12 \widetilde{W}_{\e,a}^2(\mu,\nu) + {\e}H(\mu)
\end{align*}

The formula between two most widely used variants is thus
\[
W^2_{\e,1}(\mu,\nu) = \widetilde{W}_{\frac{\e}{2},a}^2 (\mu,\nu) + \e H(\mu) - \frac{d\e}{2}\log(\pi \e)
\]

This is consistent with \cite{CRLVP20}.\\

\medskip

\printbibliography

\end{document}